\newcommand{\bbN}{{\mathbb{N}}}
\newcommand{\bbR}{{\mathbb{R}}}
\newcommand{\bbC}{{\mathbb{C}}}
\newcommand{\cA}{{\mathcal A}}
\newcommand{\cB}{{\mathcal B}}
\newcommand{\cD}{{\mathcal D}}
\newcommand{\cE}{{\mathcal E}}
\newcommand{\cF}{{\mathcal F}}
\newcommand{\cG}{{\Gamma}}
\newcommand{\cS}{{\mathcal S}}
\newcommand{\cT}{{\mathcal T}}
\newcommand{\cV}{{\mathcal V}}
\newcommand{\cX}{{\mathcal X}}
\newcommand{\lb}{\label}
\newcommand{\ran}{\text{\rm{ran}}}
\newcommand{\hatt}{\widehat}
\newcommand{\elpee}{L^p(\Gamma)}
\newcommand{\wpee}{\hatt W^{1, p}(\Gamma)}
\newcommand{\elq}{L^q(\Gamma)}
\newcommand{\el}[1]{L^{#1}(\Gamma)}
\numberwithin{equation}{section}
\renewcommand{\det}{\operatorname{det}}
\newcommand{\dom}{\operatorname{dom}}
\renewcommand{\Re}{\operatorname{Re }}
\renewcommand{\ker}{\operatorname{ker}}
\theoremstyle{plain}
\newtheorem{theorem}{Theorem}[section]
\newtheorem{lemma}{Lemma}[section]
\newtheorem{proposition}{Proposition}[section]
\theoremstyle{definition}
\newcommand{\blue}{\color{blue}}
\DeclareMathOperator{\spec}{Spec}
\begin{document}

\allowdisplaybreaks

\numberwithin{equation}{section}
\allowdisplaybreaks

\title[Logistic Keller-Segel models ]{Stability and bifurcation for logistic Keller--Segel models on compact graphs}

\author[H. Shemtaga]{Hewan Shemtaga}
\address{Department of Mathematics and Statistics,
	Auburn University, Auburn, AL 36849, USA}
\email{hms0069@auburn.edu}
%\thanks{}

\author[W.\ Shen]{Wenxian Shen}
\address{Department of Mathematics and Statistics,
	Auburn University, Auburn, AL 36849, USA}
\email{wenxish@auburn.edu}

\author[S.\ Sukhtaiev]{Selim Sukhtaiev	}
\address{Department of Mathematics and Statistics,
	Auburn University, Auburn, AL 36849, USA}
\email{szs0266@auburn.edu}
\thanks {We thank Gregory Berkolaiko for sharing his MATLAB scripts \cite{GBhomepage} and for numerous discussions. HS and SS were supported in part by NSF grant DMS-2243027, Simons Foundation grant MP-TSM-00002897, and by the Office of the Vice President for Research \& Economic Development (OVPRED) at Auburn University through the Research Support Program grant.}

\dedicatory{To Fritz Gesztesy on the occasion of his $70^{th}$ birthday}

\date{\today}
%\subjclass[]{}
\keywords{}

%%%%%%%%%%%%
%%%%%%%%%%%%
\begin{abstract}
This paper concerns asymptotic stability,  instability, and bifurcation  of constant steady state solutions of the parabolic-parabolic and parabolic-elliptic chemotaxis models on metric graphs. We determine a threshold value $\chi^*>0$ of the  chemotaxis sensitivity parameter that separates the regimes of local asymptotic stability and instability, and, in addition, determine the parameter intervals that facilitate global asymptotic convergence of solutions with positive initial data to constant steady states. Moreover, we provide a sequence of bifurcation points for the chemotaxis sensitivity parameter that yields non-constant steady state solutions. In particular, we show that the first bifurcation point coincides with threshold value $\chi^*$ for a generic compact metric graph. Finally, we supply numerical computation of bifurcation points for several graphs. 
\end{abstract}
%%%%%%%%%%%%
%%%%%%%%%%%%

\maketitle

{\scriptsize{\tableofcontents}}

%%%%%%%%%%%%%%%%%%%%%%%%%%%%%%%%%%%%%%%%%%%%%%%%%%%%%%%%%%%%%%%%%%%%%%%%

\section{Introduction}
This paper is centered around the Keller--Segel model given by the following initial value problem for a system of reaction-advection-diffusion equations on a metric graph $\Gamma=(\mathcal{V},\mathcal{E})$
\begin{equation}
	\label{parabolic-parabolic-eq}
	\begin{cases}
		u_t=\partial_x\big(\partial_{x} u-\chi u\partial_xv\big)+u(a-bu), \quad { x\in\mathcal{E}}, \cr
		\tau v_t=\partial_{xx}^2 v-v+u, \quad { x\in\mathcal{E}},\cr
		u(0, x)=u_0(x),\ v(0,x)=v_0(x),\quad {x\in \mathcal{E}},
	\end{cases}
\end{equation}
where $\chi, a,b>0, \tau\ge 0$,  $\cV$ is the set of vertices and $\cE$ is the set of edges of the graph.
This pair of PDEs describes population dynamics in presence of attracting substances. In the context of the directed movement of microorganisms in response to a chemical attractant this model is often referred to as the chemotaxis model, see \cite{KJP} for illuminating discussion of chemotaxis phenomena in biomedical and social  sciences. The vast mathematical literature on this model  includes \cite{MR3351175, MR3698165, MR2409228, MR2448428, MR2013508,  IsSh1, IsSh2, MR3925816, KS1, KoWeXu, MR3294344, MR3620027, MR3397319, MR2334836, MR1654389, MR3147229, WaYaGa, MR2445771, MR2644137, MR2754053, MR2825180, MR3115832, MR3210023, MR3462549, MR3335922, MR3286576}.  

The two quantities central to chemotaxis models are the population density $u=u(t,x)$ and the concentration of the chemical substance $v=v(t,x)$. The classical Fick's law of diffusion combined with  population drifts along the chemical gradient yield the first equation in \eqref{parabolic-parabolic-eq}, where $\chi>0$ is the chemotaxis sensitivity parameter and $\chi u\partial_x v$ is the taxis-flux term. The second equation in \eqref{parabolic-parabolic-eq} is the usual reaction-diffusion equation, with $\tau>0$ corresponding to moderate diffusion rate of the chemical substances and $\tau=0$ corresponding to rapid diffusion thereof. 

In this paper, we consider \eqref{parabolic-parabolic-eq} in two regimes:
\begin{enumerate}
	\item $\tau=0$, in which case \eqref{parabolic-parabolic-eq} is referred to as the parabolic-elliptic system,
	\item $\tau>0$, in which case \eqref{parabolic-parabolic-eq} is referred to as the parabolic-parabolic system.
\end{enumerate}
Treating both regimes by different methods, we focus on local asymptotic stability of the constant steady state $(u_0, v_0)=(a/b, a/b)$ and existence of non-trivial steady states bifurcating from $(u_0, v_0)$ in response to small variation of the chemotaxis sensitivity parameter $\chi$.  We investigate \eqref{parabolic-parabolic-eq} posed on arbitrary connected compact metric graphs $\Gamma={(\mathcal{V},\mathcal{E})}$ and consider solutions $u=u(t,x)$, $v=v(t,x)$ satisfying natural Neumann--Kirchhoff vertex conditions describing continuity and preservation of flux at all vertices $\vartheta\in\cV$, that is, 
\begin{equation}
	\label{NKsol}
	\begin{cases}
		u_e(\vartheta)=u_{e'}(\vartheta), v_e(\vartheta)=v_{e'}(\vartheta), \,\, e\sim \vartheta, e'\sim\vartheta, \,\, { \vartheta\in\mathcal{V}}\text{ (continuity at vertices)},\cr
		\sum\limits_{\vartheta \sim e} \partial_{\nu}u_e(\vartheta)=0\sum\limits_{\vartheta \sim e} \partial_{\nu}v_e(\vartheta)=0,\,\, { \vartheta\in\mathcal{V}}. \text{ (conservation of current),}
	\end{cases}
\end{equation}
where $\partial_{\nu}u_e(\vartheta)$ denotes the inward normal derivative of $u$ along the edge $e$ at the vertex $\vartheta$. 

{In \cite{HWS}, we have established well-posedness for  general chemotaxis systems on arbitrary compact metric graphs including \eqref{parabolic-parabolic-eq}{, \eqref{NKsol}} as a special case, see Theorem \ref{Lp}. In this paper, we investigate the stability, instability, and bifurcation of the constant solution $(\frac{a}{b},\frac{a}{b})$ of \eqref{parabolic-parabolic-eq}{, \eqref{NKsol}}. 
}

Our first result  provides a  threshold value $\chi^*>0$ of the  chemotaxis sensitivity parameter that separates the regimes of local asymptotic stability and instability
of the constant solution $(u_0, v_0)$.
\begin{theorem}[Local asymptotic stability and instability]\label{local-stability-thm} Let $\Gamma$ be a connected compact metric graph and let 
$\chi(\lambda)$, $\chi^*\in(0,\infty)$ be defined by
\begin{equation}
\label{chi-lambda-eq}
{\chi(\lambda):=\frac{b(\lambda-a)(1-\lambda)}{a\lambda}}, \lambda<0,
\end{equation}
and 
		\begin{align}
		&\chi^*:=\min\left \{{\chi(\lambda)}: \lambda\in\spec(\Delta)\setminus\{0\} \right\},	
	\end{align}
{where $\spec(\Delta)$  is the spectrum of   the Neumann--Kirchhoff Laplacian
acting in $L^2(\Gamma)$.}
Then the following assertions hold for $\tau\geq 0$.
\begin{enumerate}
\item If $0<\chi<\chi^*$ then the constant solution $(\frac{a}{b},\frac{a}{b})$ of \eqref{parabolic-parabolic-eq}{, \eqref{NKsol}} is locally asymptotically stable.

\item If $\chi>\chi^*$  then the constant solution $(\frac{a}{b},\frac{a}{b})$ of \eqref{parabolic-parabolic-eq}{, \eqref{NKsol}}  is unstable.
\end{enumerate}
\end{theorem}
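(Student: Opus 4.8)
The plan is to prove both assertions by the principle of linearized stability, reducing the spectral analysis of the linearization to a family of finite-dimensional problems indexed by the eigenvalues of the Neumann--Kirchhoff Laplacian. First I would write $u=a/b+w_1$, $v=a/b+w_2$ and linearize \eqref{parabolic-parabolic-eq} about the constant state. Since $\partial_x v_0=0$ and the reaction term $u(a-bu)$ linearizes with coefficient $a-2b(a/b)=-a$, the taxis term $\chi\partial_x(u\partial_x v)$ contributes only $(a\chi/b)\Delta w_2$, so the linearized system is
\begin{align*}
\partial_t w_1 &= \Delta w_1-\tfrac{a\chi}{b}\Delta w_2-a w_1,\\
\tau\partial_t w_2 &= \Delta w_2-w_2+w_1,
\end{align*}
subject to \eqref{NKsol}. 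Here $\Delta$ is the self-adjoint Neumann--Kirchhoff Laplacian on the compact graph $\Gamma$, whose spectrum is discrete, bounded above by $0$, with $\lambda_0=0$ (constant eigenfunction) and $\lambda_k<0$ for $k\ge 1$, and whose normalized eigenfunctions $\{\phi_k\}$ form an orthonormal basis of $L^2(\Gamma)$.

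The key structural observation is that the linearized operator commutes with $\Delta\oplus\Delta$, hence is block-diagonalized by the eigenbasis $\{\phi_k\}$: its spectrum is the union over $k$ of the spectra of the reduced problems obtained by projecting onto $\phi_k$. For $\tau>0$ the reduced problem is governed by the matrix
\[
M_k=\begin{pmatrix} \lambda_k-a & -\tfrac{a\chi}{b}\lambda_k \\[2pt] \tfrac{1}{\tau} & \tfrac{\lambda_k-1}{\tau}\end{pmatrix},
\]
whose trace $(\lambda_k-a)+(\lambda_k-1)/\tau$ is strictly negative for every $\lambda_k\le 0$. By the Routh--Hurwitz criterion in dimension two, both eigenvalues of $M_k$ then have negative real part precisely when $\det M_k>0$, and $\det M_k=\tau^{-1}\big[(\lambda_k-a)(\lambda_k-1)+\tfrac{a\chi}{b}\lambda_k\big]$. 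Since the coefficient of $\chi$ here is $\tfrac{a}{b}\lambda_k<0$, this bracket is decreasing in $\chi$ and positive at $\chi=0$, so for $\lambda_k<0$ one has $\det M_k>0\iff\chi<\chi(\lambda_k)$, with equality exactly at the root defining \eqref{chi-lambda-eq}. For $\tau=0$ the elliptic constraint gives $w_2=(1-\Delta)^{-1}w_1$, and the reduced problem becomes the scalar growth rate $\mu_k=(\lambda_k-a)-\tfrac{a\chi}{b}\,\tfrac{\lambda_k}{1-\lambda_k}$, which is increasing in $\chi$ and satisfies $\mu_k<0\iff\chi<\chi(\lambda_k)$ by the same computation. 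In both regimes the mode $k=0$ is unconditionally stable (eigenvalues $-a,-1/\tau$, resp. rate $-a$), and since $\chi(\lambda)\to+\infty$ as $\lambda\to-\infty$, only finitely many modes can violate stability, so $\chi^*=\min_{k\ge1}\chi(\lambda_k)$ is attained and positive.

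Assembling these facts yields the dichotomy: if $0<\chi<\chi^*$ then $\chi<\chi(\lambda_k)$ for all $k$, so every reduced problem is stable and the spectral bound of the linearization is negative; if $\chi>\chi^*$ then $\chi>\chi(\lambda_{k^*})$ for a minimizing mode $k^*$, producing a spectral point with positive real part. The final step is to transfer these spectral conclusions to the nonlinear flow via the principle of linearized stability for the analytic semigroup generated by the linearization on the function space underlying Theorem \ref{Lp}: a negative spectral bound yields local exponential asymptotic stability, while a spectral point in the open right half-plane yields instability through the corresponding unstable manifold. I expect the main obstacle to be precisely this transfer---verifying that the linearized operator, including the cross-diffusion term $(a\chi/b)\Delta w_2$, is sectorial and generates an analytic semigroup on the chosen space, and that its spectrum is exhausted by the eigenvalues of $M_k$ (equivalently $\mu_k$) with the spectral bound controlling the decay rate---rather than the elementary mode-by-mode algebra, which is routine.
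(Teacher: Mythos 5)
Your proposal is correct and follows essentially the same route as the paper: linearize about $(a/b,a/b)$, reduce the spectrum of the linearization to the family of $2\times 2$ (resp.\ scalar) problems indexed by $\spec(\Delta)$, obtain the threshold $\chi(\lambda_k)$ mode by mode, and transfer to the nonlinear flow via the principle of linearized stability (the paper cites \cite[Theorem 5.1.1]{Henry}). Two tactical differences are worth recording. First, for $\tau>0$ you dispatch each mode by the Routh--Hurwitz criterion (trace of $M_k$ negative unconditionally, $\det M_k>0\iff\chi<\chi(\lambda_k)$), whereas the paper computes the roots $\mu_{\pm}(\lambda,\chi)$ explicitly from the quadratic formula and analyzes their real parts case by case; your version is shorter and yields the identical threshold \eqref{chi-lambda-eq}. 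Second, the step you yourself flag as the main obstacle---that the spectrum of the full linearized operator is \emph{exhausted} by the eigenvalues of the matrices $M_k$---is exactly what the paper does not leave to a bare commutation argument: Lemma \ref{eigenvalue-lm2} shows the resolvent is compact and evaluates a trace-class perturbation determinant $\det(I+V_\mu)$ whose zeros are precisely the solutions of $\det(\lambda A+B-\mu T)=0$, $\lambda\in\spec(\Delta)$. Your block-diagonalization claim is morally right, but as stated it does not by itself exclude additional spectrum arising from unboundedness of the block resolvents (the entries of $M_k$ grow like $\lambda_k$), so this is the one point where your sketch needs to be completed, either along the lines of the paper's determinant computation or by any argument establishing compactness of the resolvent. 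The remaining algebra, the treatment of the $\lambda_0=0$ mode, and the observation that $\chi(\lambda)\to+\infty$ so that the minimum defining $\chi^*$ is attained, all match the paper.
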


To prove Theorem \ref{local-stability-thm} we compute the spectrum, via finding zeros of the perturbation determinant, of the linearization of \eqref{parabolic-parabolic-eq} or, equivalently, of 
\begin{equation}
	\begin{bmatrix}
		1& 0\\
		0&\tau
	\end{bmatrix}\begin{bmatrix}
		\partial_tu\\ \partial_t v
	\end{bmatrix}={ \mathcal{H}(u,v,  \chi)}:=\begin{bmatrix}
		\partial_x\big(\partial_{x} u-\chi u\partial_xv\big)+u(a-bu) \\ 
\partial_{xx}^2 v-v+u
	\end{bmatrix},
\end{equation}
about the constant steady state, see Lemma \ref{eigenvalue-lm2}.

Our next result stems from a simple observation  that $\mathcal{H}(a/b,a/b, \chi)=0$ for 
all  $\chi\geq 0$. That is, for both parabolic-parabolic and parabolic-elliptic systems $(a/b, a/b, \chi)$ is the line of constant solutions in the space $(u,v,\chi)\in \hatt W^{2,2}({\Gamma})\times \hatt W^{2,2}({\Gamma})\times (0,\infty)$ {(see \eqref{L-p-eq} in Appendix A for the definition of $\hatt W^{2,2}(\Gamma)$ and other functional spaces on graphs)}. We show that the eigenvalues of the Neumann--Kirchhoff Laplacian give rise to a sequence $\{\chi_n\}_{n\geq 1}$ of bifurcation points that is bounded from below and that accumulates only at $+\infty$. Importantly, the first bifurcation point is precisely the threshold value $\chi^*$ where stability of the constant steady state ceases to take place.

\begin{theorem}\lb{prop4.4}
	Let $\Gamma$ be a connected compact metric graph. Let $\lambda\in\spec(\Delta)$ be a simple eigenvalue of the Neumann--Kirchhoff Laplacian on $\Gamma$, let $\varphi$ be the corresponding eigenfunction and define
	\begin{align}
&\label{D-eq}
		\cD:=\left\{u\in \hatt W^{2,2}(\Gamma):  \sum\limits_{\vartheta \sim e} \partial_{\nu}u_e(\vartheta)=0,\ 
		u_e(\vartheta)=u_{e'}(\vartheta), \vartheta\sim e, e'\right\},\\
	&		\hspace{5cm}	\chi_{\lambda}:={\chi(\lambda)}. \lb{chieq}		
\end{align}
	Assume, in addition, that $\chi_{\lambda}\not= \chi_{\mu}$ for $\mu\in\spec(\Delta)\setminus\{\lambda\}$ then $\chi_{\lambda}$ is a bifurcation point  { of \eqref{parabolic-parabolic-eq}, \eqref{NKsol}}, that is, there exist $\varepsilon>0$ and $\chi\in C^2((-\varepsilon, \varepsilon), \bbR)$, $\Phi\in C^2((-\varepsilon, \varepsilon), \cD\times \cD)$ such that ${\mathcal{H}}\left(\Phi(s), \chi(s)\right)=0$ for $s\in (-\varepsilon, \varepsilon)$ and 
	\begin{align}
		&\chi(0)=\chi_{\lambda}, \Phi(s)=\begin{bmatrix}
			a/b\\  a/b
		\end{bmatrix}+
		s\begin{bmatrix}
			u\\ v
		\end{bmatrix}\varphi+ o(s) \text{\ in  $\hatt W^{2,2}(\Gamma)\times \hatt W^{2,2}(\Gamma)\ $as\ }s\rightarrow 0,
	\end{align}
	where  
	\begin{equation}\lb{auxmatnew}
		\begin{bmatrix}
			u\\ v
		\end{bmatrix}\in \ker M,\ \ M:=\begin{bmatrix}
			\lambda-a-\frac{\chi a}{b}& 	\frac{\chi a}{b} \\
			1&\lambda-1
		\end{bmatrix}.
	\end{equation}
	Moreover, there exists an open set $U\subset \cD\times \cD\times\bbR$ containing $\left(\frac ab, \frac ab, \chi_{\lambda}\right)$ such that
	\begin{align}
		&\left\{ (u,v, \chi)\in U: { \mathcal{H}} (u,v,\chi)=0, (u,v)\not=\left(\frac ab, \frac ab\right)\right\}=\left\{(\Phi(s), \chi(s)): |s|<\varepsilon\right\}. 
	\end{align}
\end{theorem}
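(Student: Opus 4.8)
The plan is to apply the Crandall--Rabinowitz theorem on bifurcation from a simple eigenvalue to the steady-state map $\mathcal{H}$. Since the parameter $\tau$ multiplies only the time derivative, the steady-state equation $\mathcal{H}(u,v,\chi)=0$ is identical in the parabolic-parabolic and parabolic-elliptic regimes, so a single argument covers both $\tau=0$ and $\tau>0$. First I would recenter at the constant solution by setting $u=a/b+\tilde u$, $v=a/b+\tilde v$ and regard $F(\chi,(\tilde u,\tilde v)):=\mathcal{H}(a/b+\tilde u,\,a/b+\tilde v,\,\chi)$ as a map $\bbR\times(\cD\times\cD)\to L^2(\Gamma)\times L^2(\Gamma)$. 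By construction $F(\chi,0)=0$ for every $\chi$, which is the trivial branch, and the Neumann--Kirchhoff conditions are built into $\cD$. A preliminary technical step is to verify that $F$ is $C^2$; the only nonroutine term is the taxis flux $\partial_x(\chi u\,\partial_x v)$, whose smoothness follows from the algebra and embedding property $\hatt W^{2,2}(\Gamma)\hookrightarrow C^1(\Gamma)$ on a compact graph, which controls the products of first derivatives.

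Next I would compute the linearization $L:=D_{(\tilde u,\tilde v)}F(\chi,0)$ at the constant state, obtaining
\[
L\begin{bmatrix}\tilde u\\ \tilde v\end{bmatrix}
=\begin{bmatrix}\partial_{xx}\tilde u-\tfrac{\chi a}{b}\partial_{xx}\tilde v-a\tilde u\\[2pt] \partial_{xx}\tilde v-\tilde v+\tilde u\end{bmatrix}.
\]
Expanding in the eigenbasis $\{\varphi_n\}$ of the Neumann--Kirchhoff Laplacian (with $\Delta\varphi_n=\lambda_n\varphi_n$), the action of $L$ decouples on each eigenspace into the $2\times2$ matrix $M=M(\lambda_n,\chi)$ of \eqref{auxmatnew}, and a direct computation identifies the solvability condition $\det M(\lambda_n,\chi)=0$ with $\chi=\chi(\lambda_n)$. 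To verify the Crandall--Rabinowitz hypotheses at $\chi=\chi_\lambda$ I would check: (i) for $\lambda_n=0$ the matrix is invertible, so the constant mode does not contribute to the kernel; (ii) by the assumption $\chi_\lambda\neq\chi_\mu$ for $\mu\in\spec(\Delta)\setminus\{\lambda\}$, the determinant vanishes only on the eigenspace of $\lambda$; and (iii) since $\lambda$ is simple and $\ker M(\lambda,\chi_\lambda)$ is one-dimensional, $\ker L$ is spanned by the single vector $[u,v]^\top\varphi$ with $[u,v]^\top\in\ker M$. Because $L$ differs from its (invertible) diagonal principal part by lower-order, relatively compact terms, it is Fredholm of index zero, whence $\codim\ran L=\dim\ker L=1$.

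The decisive step is the transversality condition $D_\chi L\,\phi_0\notin\ran L$, where $\phi_0=[u,v]^\top\varphi$ generates the kernel. Here $D_\chi L[\tilde u,\tilde v]^\top=[-\tfrac ab\partial_{xx}\tilde v,\,0]^\top$, so $D_\chi L\,\phi_0=[-\tfrac ab\lambda v\,\varphi,\,0]^\top$. Using the Fredholm alternative I would reduce the condition to the nonvanishing of the pairing $\langle D_\chi L\,\phi_0,\psi\rangle$, where $\psi=[p,q]^\top\varphi$ spans $\ker L^\ast$ and $[p,q]^\top$ lies in the kernel of the transpose of $M(\lambda,\chi_\lambda)$. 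This pairing equals a nonzero multiple of $\lambda\,v\,p\,\|\varphi\|_{L^2}^2$, which is nonzero because $\lambda\neq0$ and both $v$ and $p$ are nonzero (if $v=0$ then $u=0$ from the second row of $M$, and similarly $p\neq0$), so transversality holds. I expect this verification, together with the bookkeeping needed to show that $\ker L$ is exactly one-dimensional across all eigenspaces, to be the main point requiring care; the remaining conclusions---the existence of $\varepsilon>0$ and of $\chi\in C^2$, $\Phi\in C^2$ with the expansion $\Phi(s)=(a/b,a/b)^\top+s[u,v]^\top\varphi+o(s)$, and the local description of the zero set of $\mathcal{H}$ near $(a/b,a/b,\chi_\lambda)$---then follow directly from the Crandall--Rabinowitz theorem.
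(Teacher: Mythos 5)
Your proposal is correct and follows the same overall route as the paper: both reduce to the Crandall--Rabinowitz theorem for the common steady-state map, establish $C^2$ smoothness, prove Fredholmness of index zero by splitting the linearization into an invertible principal part plus a lower-order term that is compact from $\hatt W^{2,2}(\Gamma)$ into $L^2(\Gamma)$, and identify $\ker L$ by decomposing along the eigenspaces of the Neumann--Kirchhoff Laplacian, using the simplicity of $\lambda$ and the injectivity hypothesis $\chi_\lambda\neq\chi_\mu$ to get a one-dimensional kernel. The one place where you genuinely diverge is the transversality condition. The paper argues that it suffices to show $\ker(L^*)=\{0\}$ and derives a contradiction from $(\lambda-a)(\lambda-1)=0$; as written this is at odds with $L$ being Fredholm of index zero with a one-dimensional kernel (which forces $\dim\ker L^*=1$), and the displayed determinant also drops the $\chi$-dependent entries. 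Your version is the standard and robust one: since $\ran L$ is the annihilator of $\ker L^*$, you test $D_\chi L\,\phi_0=[-\tfrac{a}{b}\lambda v\varphi,\,0]^\top$ against the generator $[p,q]^\top\varphi$ of $\ker L^*$ and check that the resulting multiple of $\lambda v p\|\varphi\|_{L^2}^2$ is nonzero, with $\lambda\neq 0$, $v\neq 0$, $p\neq 0$ each justified from the structure of the $2\times2$ blocks. This buys an airtight verification of the hypothesis the paper treats only sketchily. One small caveat: the $2\times 2$ block you obtain from the linearization is $\lambda A+B=\bigl[\begin{smallmatrix}\lambda-a & -\frac{\chi a}{b}\lambda\\ 1 & \lambda-1\end{smallmatrix}\bigr]$ rather than literally the matrix $M$ of \eqref{auxmatnew} (which appears to carry a sign/typo inconsistency relative to the paper's own kernel computation); your determinant condition nevertheless recovers exactly $\chi=\chi(\lambda)$, so the argument stands.
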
 
We stress that both assumptions of Theorem \ref{prop4.4} hold automatically for a generic connected graph $\Gamma$ that has no vertices of degree $2$ and that is not a circle. That is, for a given combinatorial graph $(\cV, \cE)$ that has no loops there exists a dense $G_{\delta}$ set  $\cS\subset \bbR^{|\cE|}_+$ of edge lengths such that the corresponding metric graph $\Gamma$ with edge lengths $\{\ell_e, e\in\cE\}\in \cS$ satisfies the assumptions of Theorem \ref{prop4.4}. Indeed, the eigenvalues of the Neumann--Kirchhoff Laplacian are simple for generic graph $\Gamma$, see \cite{MR2151598}. In addition, since the eigenvalues of the Neumann--Kirchhoff Laplacian depend continuously on edge lengths, the function $\spec(\Delta)\ni\lambda\mapsto \chi_{\lambda}\in(0,\infty)$ is injective up to small variation of $\{\ell_e, e\in\cE\}\in \cS$. Figure \ref{dumbbell} illustrates numerically the bifurcation points for the dumbell graph, see also Section \ref{numerics} for more numerical examples.   

{We note that in the absence of chemotaxis, that is, when $\chi=0$, the constant solution $(\frac{a}{b},\frac{a}{b})$ of \eqref{parabolic-parabolic-eq}, \eqref{NKsol} is globally stable. In particular, no non-constant steady states of \eqref{parabolic-parabolic-eq}, \eqref{NKsol} exists for $\chi=0$.
In this context, Theorem \ref{prop4.4} shows that  chemotaxis induces non-constant steady states via the bifurcation of the constant steady state. In the special case of $\Gamma$ being a single interval the bifurcation of the constant steady state and existence of spiky solutions have been investigated, for example,  in \cite{CaXuLi, KoWeXu, MR2334836, WaYaGa, WaXu}.
}
\begin{figure}
	\begin{subfigure}{.5\textwidth}
		\centering
		\includegraphics[width=1.3\linewidth]{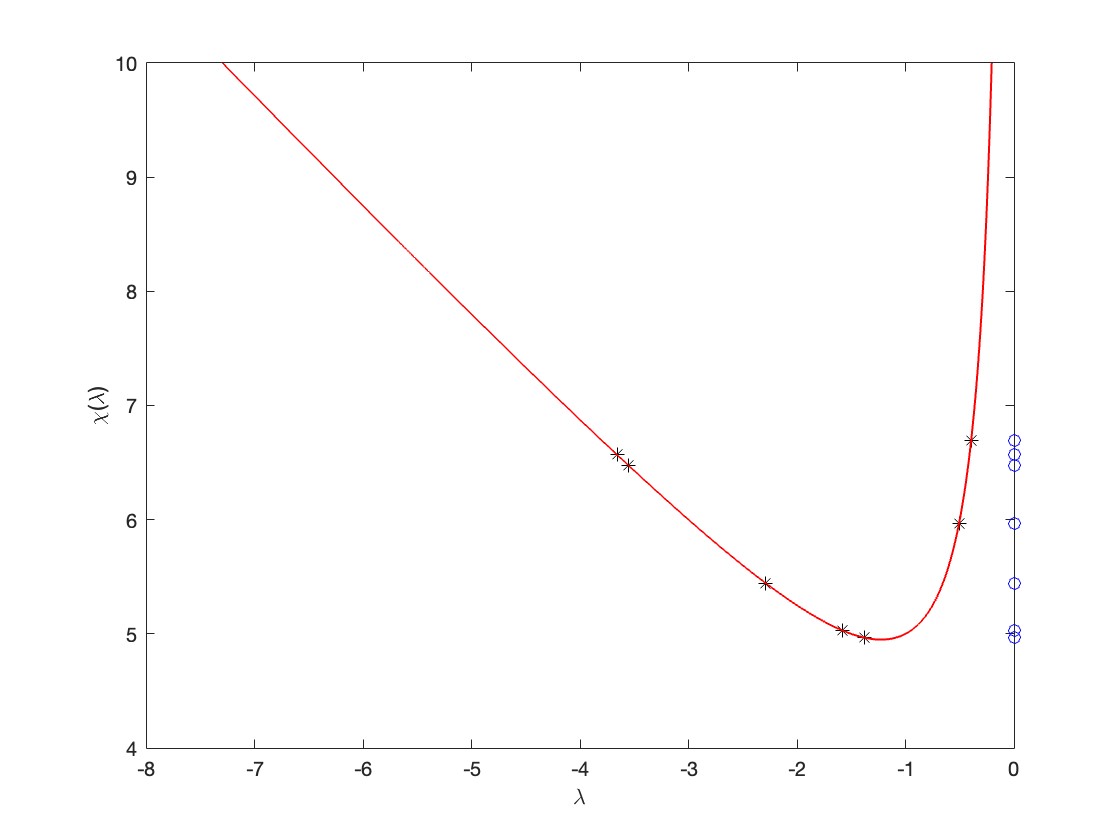}
	\end{subfigure}%
	\begin{subfigure}{.5\textwidth}
		\centering
		\includegraphics[scale=0.15]{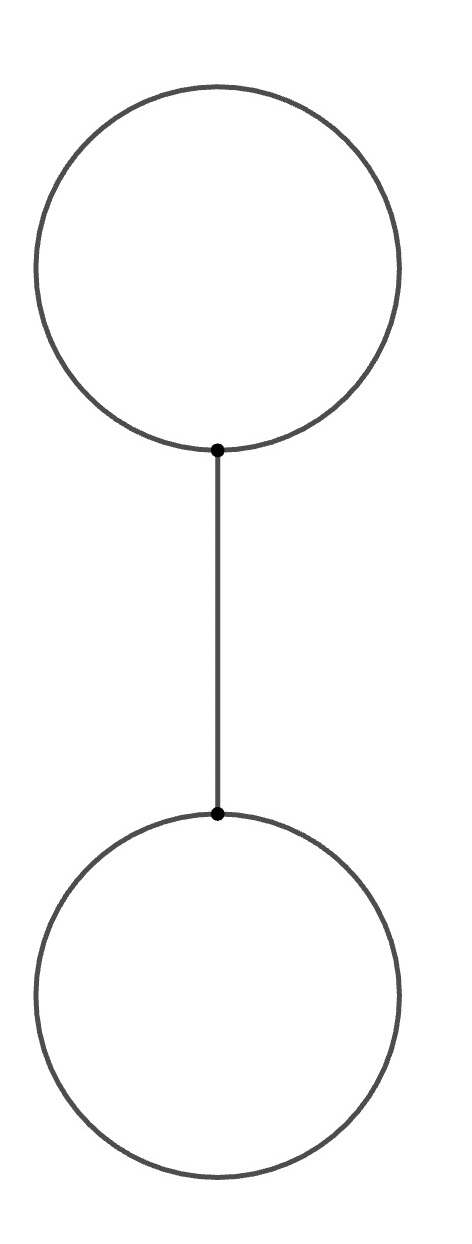}
	\end{subfigure}
	\caption{The red curve is the graph of $\chi=\chi(\lambda)$ with $a=b=1.5$; $*$ indicates the values of $\chi$ at eigenvalues of the Kirchhoff Laplacian on a {\it dumbbell graph }with edge lengths $10, 5, 1$; {\blue $\circ$} indicate bifurcation points. The first bifurcation point $\chi^*\approx4.96489$ corresponds to the $4-$th eigenvalue.}
	\label{dumbbell}
\end{figure}

The next two theorems concern global asymptotic convergence of solutions with non-trivial non-negative initial data to the constant steady state $(a/b, a/b)$ in the following regimes:
\begin{itemize}
\item for $\chi$ satisfying \begin{align}\lb{kap}
	&\frac {b^2}{\chi^2}   > \frac{\left(a+\chi \kappa(a,b,\chi) \right)^2}{a-\chi \kappa(a,b,\chi) } \text{\ and\ } a-\chi \kappa(a,b,\chi) >0,
\end{align}
where
\begin{align}
	&\kappa(a,b,\chi):={ C \left(\frac{2a^2}{b^2}+\frac{a^2\chi}{b^2}+\frac{a^2\chi^2}{b^3}+\frac{a^3\chi^3}{b^4}+\frac{a^4\chi^4}{b^5} \right)}, 
\  C=C(\Gamma)>0,
\end{align} 
in the parabolic-parabolic model, see Theorem \ref{global-stability-thm},

\item for $\chi\in(0,b/2)$ in the parabolic-elliptic model, see Theorem \ref{4.4}.
\end{itemize}

\begin{theorem} [Global stability for parabolic-parabolic model]
\label{global-stability-thm}  	Let $\Gamma$ be a connected compact metric graph. Let $u_0\in\hatt C(\overline{\Gamma})$, $v_0\in \hatt C^1(\overline{\Gamma})$ be non-negative initial data  { $u_0\not \equiv 0$} and let $u=u(x, t; u_0,v_0)$, $v=v(x, t; u_0,v_0)$ be a global unique positive solution of \eqref{parabolic-parabolic-eq} with $\tau>0$ satisfying the Neumann--Kirchhoff vertex conditions \eqref{NKsol} { and the initial condition $(u(x,0;u_0,v_0), v(x,0;u_0,v_0))=(u_0(x),v_0(x))$} \footnote{cf. Theorem \ref{Lp} (2)}. Then there exists $C=C(\Gamma)>0$ such that for $\chi>0$ satisfying  \eqref{kap} one has
\begin{equation}\lb{asympstab1}
	\lim_{t \rightarrow \infty}\left(\left\|u(t, \cdot;u_0, v_0)-\frac{a}{b}\right\|_{L^{\infty}(\Gamma)} + \left\|v(t, \cdot;u_0, v_0)-\frac{a}{b}\right\|_{L^{\infty}(\Gamma)} \right)=0. 
\end{equation}
\end{theorem}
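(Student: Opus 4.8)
The plan is to prove \eqref{asympstab1} in two stages: first establish crude, uniform-in-time trapping bounds that confine $u$ to a narrow band around $a/b$ once \eqref{kap} holds, and then run a Lyapunov energy argument whose decay rate is controlled precisely by the quantities in \eqref{kap}. Throughout I work with the perturbation variables $w := u - a/b$ and $z := v - a/b$, for which the reaction term of the first equation in \eqref{parabolic-parabolic-eq} simplifies to $u(a-bu) = -b\,u\,w$ and the second equation becomes $\tau z_t = \partial_{xx}^2 z - z + w$. Global existence and positivity of $(u,v)$ are supplied by Theorem \ref{Lp}.

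\emph{Stage one: a priori bounds.} I would first bound the mass: integrating the $u$-equation over $\Gamma$, the divergence and taxis terms drop out by \eqref{NKsol}, leaving $\frac{d}{dt}\int_\Gamma u = a\int_\Gamma u - b\int_\Gamma u^2 \le a\int_\Gamma u - \tfrac{b}{|\Gamma|}\big(\int_\Gamma u\big)^2$ by Cauchy--Schwarz, so $\limsup_{t\to\infty}\|u(t)\|_{L^1(\Gamma)} \le a|\Gamma|/b$, independently of $\chi$. Feeding this into the $v$-equation and using parabolic/elliptic smoothing for $(-\partial_{xx}^2+1)$ on the compact graph, one bootstraps through $\|v\|_{L^\infty(\Gamma)}$, $\|\partial_x v\|_{L^\infty(\Gamma)}$ and finally $\|\partial_{xx}^2 v\|_{L^\infty(\Gamma)}$; the successive estimates are exactly what produces the nested powers of $\chi$ in $\kappa(a,b,\chi)$, and they yield $\limsup_{t\to\infty}\|\partial_{xx}^2 v(t)\|_{L^\infty(\Gamma)} \le \kappa(a,b,\chi)$ with the graph constant $C(\Gamma)$. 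Rewriting the first equation as $u_t = \partial_{xx}^2 u - \chi\,\partial_x v\,\partial_x u + u\big(a - \chi\,\partial_{xx}^2 v - bu\big)$ and applying the comparison principle for the Neumann--Kirchhoff heat flow then traps $u$ asymptotically in the band $[(a-\chi\kappa)/b,\ (a+\chi\kappa)/b]$, where the lower bound is meaningful precisely because the second condition in \eqref{kap}, namely $a - \chi\kappa > 0$, holds.

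\emph{Stage two: Lyapunov decay.} With the band bounds $a-\chi\kappa \le bu \le a+\chi\kappa$ available for large $t$, I would test the $w$-equation against $w$ and the $z$-equation against $z$. Integration by parts over $\Gamma$ is legitimate and boundary-free thanks to \eqref{NKsol}, and the reaction contributes $-b\int_\Gamma u\,w^2 \le -(a-\chi\kappa)\int_\Gamma w^2$. The only indefinite term is the taxis cross-term $\chi\int_\Gamma \partial_x w\,(u\,\partial_x z)$, which I split using $\|u\|_{L^\infty(\Gamma)} \le (a+\chi\kappa)/b$ and Young's inequality so as to absorb the resulting $\int_\Gamma|\partial_x w|^2$ into the diffusion term, leaving a multiple of $\int_\Gamma|\partial_x z|^2$ weighted by $\chi^2(a+\chi\kappa)^2/b^2$. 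Eliminating $\int_\Gamma|\partial_x z|^2$ through the $z$-energy identity and forming the combined functional $\mathcal{L}(t) := \tfrac12\|w(t)\|_{L^2(\Gamma)}^2 + \tfrac{A\tau}{2}\|z(t)\|_{L^2(\Gamma)}^2$ with a suitable weight $A \sim \chi^2(a+\chi\kappa)^2/b^2$, I obtain $\tfrac{d}{dt}\mathcal{L} \le -\delta\big(\|w\|_{L^2(\Gamma)}^2 + \|z\|_{L^2(\Gamma)}^2\big)$ for some $\delta>0$ exactly when $\frac{b^2}{\chi^2} > \frac{(a+\chi\kappa)^2}{a-\chi\kappa}$, i.e.\ under \eqref{kap}. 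This yields exponential decay of $\|w\|_{L^2(\Gamma)}$ and $\|z\|_{L^2(\Gamma)}$; a final interpolation (Gagliardo--Nirenberg on $\Gamma$) against the uniform higher-order bounds from stage one upgrades it to the $L^\infty(\Gamma)$ convergence \eqref{asympstab1}. Here $\tau>0$ is used essentially, through the term $\tfrac{A\tau}{2}\|z\|_{L^2(\Gamma)}^2$; the case $\tau=0$ is treated separately in Theorem \ref{4.4}.

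The main obstacle is obtaining the sharp threshold \eqref{kap} rather than a lossy sufficient condition: this forces one to optimize the Young weight in the taxis term jointly with the functional weight $A$ so that the quadratic form in $(\|w\|_{L^2(\Gamma)}, \|z\|_{L^2(\Gamma)})$ is negative definite on exactly the stated region, and it requires the band bounds from stage one to be matched to the coefficients $a\pm\chi\kappa$. Technically the most delicate point is stage one: rigorously justifying the comparison principle at the Kirchhoff vertices and carrying the bootstrap iteration on the metric graph through to pin down $\kappa(a,b,\chi)$ with the explicit constant $C(\Gamma)$.
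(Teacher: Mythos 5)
Your proposal follows essentially the same route as the paper: the $L^1$ mass bound, a Duhamel/smoothing bootstrap culminating in $\limsup_{t\to\infty}\|\partial_{xx}^2 v\|_{L^\infty(\Gamma)}\le\kappa(a,b,\chi)$, a comparison-principle trapping of $u$ in the band $[(a-\chi\kappa)/b,(a+\chi\kappa)/b]$, and then a weighted $L^2$ energy estimate for $(u-a/b,\,v-a/b)$ whose sign condition is exactly \eqref{kap}. The only cosmetic differences are that the paper's bootstrap passes explicitly through $\|v\|_{\hatt C^1}$, $\|u\|_{L^4}$ and a fractional-power bound $\|(I-\Delta)^{\gamma}u\|_{L^2}$ before reaching $\|\Delta v\|_{L^\infty}$, and that the final upgrade from $L^2$ to $L^\infty$ convergence is done by a uniform-continuity contradiction argument rather than Gagliardo--Nirenberg interpolation.
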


\begin{theorem}[Global stability for parabolic-elliptic model]\lb{4.4}
	Let $\Gamma$ be a connected compact metric graph.  Let $u_0\in\hatt C(\overline{\Gamma})$ be non-negative initial data $u_0\not\equiv 0$ and let $u=u(x, t; u_0)$, $v=v(x, t; u_0)$ be a global unique positive solution of \eqref{parabolic-parabolic-eq} with $\tau=0$ satisfying the Neumann--Kirchhoff vertex conditions \eqref{NKsol} { and the initial condition $u(x,0;u_0)=u_0(x)$} \footnote{cf. Theorem \ref{Lp} (1)}. Then for $\chi\in (0, b/2)$ one has
	\begin{equation}
		\lim_{t \rightarrow \infty}\left(\left\|u(t, \cdot;u_0)-\frac{a}{b}\right\|_{L^{\infty}(\Gamma)} + \left\|v(t, \cdot;u_0)-\frac{a}{b}\right\|_{L^{\infty}(\Gamma)} \right)=0. 
	\end{equation}
\end{theorem}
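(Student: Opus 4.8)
The plan is to prove uniform convergence by an iterative \emph{sandwiching} of the spatial extrema $\bar u(t):=\max_{x\in\Gamma}u(t,x)$ and $\underline u(t):=\min_{x\in\Gamma}u(t,x)$ between two scalar sequences that both converge to $a/b$; the decisive role of the hypothesis $\chi<b/2$ will be that it makes the associated affine iteration a contraction. First I would exploit the elliptic equation to control $v$ pointwise by $u$. Writing the second equation of \eqref{parabolic-parabolic-eq} with $\tau=0$ as $(I-\Delta)v=u$, where $\Delta=\partial_x^2$ is the Neumann--Kirchhoff Laplacian, I use that the resolvent $(I-\Delta)^{-1}=\int_0^\infty e^{-t}e^{t\Delta}\,dt$ is positivity preserving and fixes constants (since $e^{t\Delta}$ is a Markovian semigroup with $\Delta\mathbf 1=0$). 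Consequently the constant bounds $\underline u(t)\le u(\cdot,t)\le\bar u(t)$ are propagated to $\underline u(t)\le v(\cdot,t)\le\bar u(t)$ for every $t$, which dispenses with any delicate vertex analysis for the chemical concentration.

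Next I would rewrite the $u$-equation in non-divergence form. Substituting $v_{xx}=v-u$ turns the first equation into
\[
u_t=u_{xx}-\chi u_x v_x+u\big(a+(\chi-b)u-\chi v\big).
\]
At an interior maximum of $u(\cdot,t)$ one has $u_x=0$ and $u_{xx}\le 0$, so the advection and diffusion terms drop out; together with $v\ge\underline u$ this yields the differential inequality $\tfrac{d}{dt}\bar u\le \bar u\,(a-(b-\chi)\bar u-\chi\,\underline u)$, and symmetrically $\tfrac{d}{dt}\underline u\ge \underline u\,(a-(b-\chi)\underline u-\chi\,\bar u)$ at an interior minimum. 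Dropping the coupling term on the first line (using $v\ge 0$) already gives $\limsup_t\bar u\le \tfrac{a}{b-\chi}=:\overline U_1<\infty$ because $\chi<b$, establishing the uniform-in-time bound that starts the iteration. One then alternately feeds the current bounds into these inequalities to produce sequences $\overline U_{n+1}=\tfrac{a-\chi\,\underline U_n}{b-\chi}$, $\underline U_{n+1}=\tfrac{a-\chi\,\overline U_n}{b-\chi}$ with $\underline U_0=0$, and proves by induction that $\liminf_t\underline u\ge \underline U_n$ and $\limsup_t\bar u\le\overline U_n$ for all $n$; the condition $\chi<b/2$ guarantees $a-\chi\overline U_n>0$, so the lower bounds stay positive.

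It then remains to analyze the scalar iteration. The gap satisfies $\overline U_{n+1}-\underline U_{n+1}=\tfrac{\chi}{b-\chi}\,(\overline U_n-\underline U_n)$, so it decays geometrically precisely when $\tfrac{\chi}{b-\chi}<1$, i.e. when $\chi<b/2$; the sum obeys $\overline U_{n+1}+\underline U_{n+1}=\tfrac{2a-\chi(\overline U_n+\underline U_n)}{b-\chi}$, which contracts to the fixed value $2a/b$, whence $\overline U_n,\underline U_n\to a/b$. Passing to the limit gives $\liminf_t\underline u\ge a/b\ge\limsup_t\bar u$, hence $\|u(t,\cdot)-a/b\|_{L^\infty(\Gamma)}\to 0$, and the elliptic sandwich upgrades this to $\|v(t,\cdot)-a/b\|_{L^\infty(\Gamma)}\to 0$.

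The main obstacle is to make the extremum differential inequalities rigorous, since $\bar u(t)$ and $\underline u(t)$ are only Lipschitz in $t$ and their extrema may be attained at vertices of $\Gamma$. I would handle the time regularity in the standard way (differentiating along a realizing point / upper Dini derivatives), and the vertex case through the Neumann--Kirchhoff maximum principle: at a vertex maximum, continuity together with $\sum_{e\sim\vartheta}\partial_\nu u_e(\vartheta)=0$ forces all inward derivatives to vanish, so the one-sided second-order information retains the correct sign. Positivity and the requisite parabolic regularity of the solution, used throughout, are furnished by Theorem \ref{Lp}. An alternative that avoids differentiating the extrema altogether is to run the same iteration through a comparison principle against spatially constant super- and sub-solutions of the scalar $u$-equation, the advection term $-\chi u_x v_x$ being harmless on $x$-independent barriers.
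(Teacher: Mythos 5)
Your proposal is correct and follows essentially the same route as the paper's proof (which adapts the method of \cite{MR3620027}): bound $v$ between the spatial extrema of $u$ via the elliptic maximum principle for $(I-\Delta)v=u$, rewrite the $u$-equation in non-divergence form using $v_{xx}=v-u$, and compare with spatially constant logistic super- and subsolutions. The only cosmetic difference is that you close the argument by iterating the affine map $(\overline U,\underline U)\mapsto\bigl(\tfrac{a-\chi\underline U}{b-\chi},\tfrac{a-\chi\overline U}{b-\chi}\bigr)$ to its fixed point $a/b$, whereas the paper adds the two limiting inequalities $\overline u\le\tfrac{(a-\chi\underline u)_+}{b-\chi}$ and $\underline u\ge\tfrac{a-\chi\overline u}{b-\chi}$ and invokes $b>2\chi$ in a single algebraic step; both hinge on the same contraction ratio $\chi/(b-\chi)<1$.
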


{The global stability of the positive constant solution 
for \eqref{parabolic-parabolic-eq} with $\tau>0$ on  regular convex  domains $\Omega$ with Neumann boundary condition  is studied in \cite{LiMu, MR3210023, MR3335922}.  These works heavily rely on the following inequality
$$
\frac{\partial |\nabla v|^2}{\partial \nu}\le 0,\quad x\in\partial\Omega,
$$
where $\frac{\partial}{\partial\nu}$ denotes the outward normal derivative. Such an inequality is not available in the setting of metric graphs. For parabolic-parabolic models, i.e. $\tau>0$, we offer a new alternative approach which does apply to regular domains, see Section \ref{sec3}. We also note that the global stability of the positive constant solution 
for \eqref{parabolic-parabolic-eq} with $\tau=0$ on  regular domains $\Omega$ with Neumann boundary condition have been studied in \cite{IsSh1, MR3620027, MR2334836}. We adopt the approach established in \cite{MR3620027} to prove global stability of  the positive constant solution 
for \eqref{parabolic-parabolic-eq} with $\tau=0$.
}

{The rest of the paper is organized as follows. In Section \ref{sec2}, we study the local stability,  instability, and bifurcation  of the constant solution $(\frac{a}{b},\frac{a}{b})$ of
 \eqref{parabolic-parabolic-eq}, \eqref{NKsol}  and prove Theorems \ref{local-stability-thm} and \ref{prop4.4}.  In Section \ref{sec3}, we investigate the global stability of the constant solution $(\frac{a}{b},\frac{a}{b})$ of  \eqref{parabolic-parabolic-eq}, \eqref{NKsol} and prove Theorems 
\ref{global-stability-thm}  and \ref{4.4}.  We supply numerical computation of bifurcation points for several graphs in Section \ref{numerics}. Finally, in Appendix \ref{functionalspaces}, 
 we record several facts about fractional power spaces  generated by the Neumann--Kirchhoff Laplacian on compact metric graphs.

}

\section{{Local stability, instability,  and bifurcation of constant steady states}}\lb{sec2}

{ In this section, we study the local stability,  instability, and bifurcation  of the constant solution $(\frac{a}{b},\frac{a}{b})$ of
 \eqref{parabolic-parabolic-eq}, \eqref{NKsol}  via  spectral analysis of the linearizations of 
\eqref{parabolic-parabolic-eq}, \eqref{NKsol}
 about the steady state solution $(\frac{a}{b},\frac{a}{b})$.   We first recall a global well-posedness result
from \cite{HWS} in Section 2.1. We then prove Theorems \ref{local-stability-thm} and \ref{prop4.4} in Sections 2.2 and 2.3, respectively.}

\subsection{Well-posedness of Keller--Segel model on graphs}

  First, let us record a result concerning well-posedness of \eqref{parabolic-parabolic-eq} subject to vertex conditions {\eqref{NKsol}}  in $\elpee$ (see  \eqref{L-p-eq} in Appendix A for the definition of $L^p(\Gamma)$ and other functional spaces on metric graphs)  and regularity of solutions, in particular, their membership to the fractional power spaces $\cX^{\beta}_p$ generated by the Neumann--Kirchhoff Laplacian $\Delta$ and to the space of H\"older continuous functions $\hatt C^{\nu}(\overline{\Gamma})$ (see Appendix \ref{functionalspaces} for definition of $\cX_p^\beta$, $\hatt C^{\nu}(\overline{\Gamma})$).

\begin{theorem}{\cite{HWS}.} \lb{Lp} Let $\Gamma$ be a connected compact metric graph. Then there exists $p_0\geq 1$ such that the following assertions hold for $p\geq p_0$.  
	
	(1)  Assume that $\tau=0$.  Then for arbitrary $u_0\in \elpee$, \eqref{parabolic-parabolic-eq} has a unique global classical solution  $u=u(t, x; u_0)$, $v=v(t, x; u_0)$, $t\geq 0$ satisfying Neumann--Kirchhoff vertex conditions \eqref{NKsol}. For such a solution one has 
	\begin{align}\lb{rubp}
		u\in C((0, \infty), \hatt C^{\nu}(\overline{\Gamma}))\cap  C([0, \infty), \elpee)\cap C^{0,\beta}((0,  \infty), \cX^{\beta}_r),
	\end{align}	
	for arbitrary $r\geq1$, $\beta\in(0,1/8)$, $\nu<\beta$.  Moreover, if $u_0\in\hatt C(\overline{\Gamma})$ is non-negative {and not identically zero},  then $u=u(t, x; u_0)>0$, $v=v(t, x; u_0)>0$ for all { $t> 0$}, $x\in\Gamma$.
	
	(2) Assume that $\tau>0$ and let $(u_0, v_0)\in \elpee\times \wpee$. Then  \eqref{parabolic-parabolic-eq} has a unique global classical solution $u=u(x, t; u_0, v_0)$, $v=v(x, t; u_0, v_0)$, $t\in [0, \infty)$ satisfying Neumann--Kirchhoff vertex conditions \eqref{NKsol}. For such a solution one has
	\begin{align}
		\begin{split}\label{rmilduvsolsmax}
			&u\in C([0, \infty), L^p(\Gamma))\cap C((0, \infty), L^r(\Gamma))\cap C^{0,\beta}((0, \infty), \cX^{\beta}_r)\cap C^{0,\beta}((0, \infty), \hatt C^{\nu}(\overline{\Gamma})),  \\
			&v\in C([0, \infty), \wpee)\cap C^{0,\beta}((0, \infty), \hatt W^{2,r}(\Gamma))\cap C^{0,\beta}((0, \infty), \cX^{\beta}_r)\cap C^{0,\beta}((0, \infty), \hatt C^{\nu}(\overline{\Gamma})), 
		\end{split}
	\end{align}	
	for arbitrary $r\geq1$, $\beta\in(0,1/8)$, $\nu<\beta$. Moreover, if $u_0\in\hatt C(\overline{\Gamma})$, $v_0\in \hatt C^1(\overline{\Gamma})$ are non-negative {with $u_0\not\equiv 0$}  the $u=u(t, x; u_0, v_0)>0$, $v=v(t, x; u_0, v_0)> 0$ for all { $t> 0$}, $x\in\Gamma$.
\end{theorem}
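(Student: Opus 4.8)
The plan is to recast \eqref{parabolic-parabolic-eq}, \eqref{NKsol} as an abstract semilinear evolution equation governed by the Neumann--Kirchhoff Laplacian $\Delta$ and to run the analytic-semigroup machinery on the fractional power spaces $\cX^\beta_p=\dom((I-\Delta)^\beta)$. The operator $\Delta$ is self-adjoint and nonpositive on $\elpee$, so $I-\Delta$ is sectorial and $e^{t\Delta}$ is an analytic, positivity-preserving semigroup; the fractional powers and the embeddings $\cX^\beta_p\hookrightarrow \hatt C^{\nu}(\overline{\Gamma})$ recorded in Appendix \ref{functionalspaces} supply the regularity scale. For $\tau=0$ I would eliminate $v$ by solving the elliptic equation, $v=(I-\Delta)^{-1}u$, reducing the system to the single equation
\[
u_t=\Delta u-\chi\,\partial_x\!\big(u\,\partial_x(I-\Delta)^{-1}u\big)+u(a-bu),
\]
while for $\tau>0$ I would keep the coupled pair with the diagonal sectorial generator $\diag\big(\Delta,\tau^{-1}(\Delta-I)\big)$ and source $\big(-\chi\partial_x(u\,\partial_x v)+u(a-bu),\ \tau^{-1}u\big)$.

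For local existence I would pass to the Duhamel (mild) formulation and set up a contraction in $C([0,T],\cX^\beta_p)$ (respectively in the product space when $\tau>0$). The one genuinely nonlinear term $\partial_x(u\,\partial_x v)$ carries an outer spatial derivative, which I would absorb into the semigroup via the smoothing estimate $\|(I-\Delta)^\beta e^{t\Delta}\partial_x\|_{\elpee\to\elpee}\lesssim t^{-1/2-\beta}$; since $1/2+\beta<1$ for $\beta<1/8$, this kernel is integrable in time and the Duhamel integral is well defined. Combined with product and Sobolev estimates on $\Gamma$ — controlling $u\,\partial_x v$ in $\elpee$ by the $\cX^\beta_p$-norm of $u$ and the extra derivative gained from $(I-\Delta)^{-1}$ (for $\tau=0$) or from the second semigroup (for $\tau>0$) — this makes the Duhamel map a local contraction, yielding a unique mild solution. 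Parabolic bootstrapping then upgrades it to a classical solution satisfying \eqref{NKsol}, and the smoothing $e^{t\Delta}:\elpee\to\cX^\beta_r$ for $t>0$ produces the Hölder- and $\cX^\beta_r$-continuity statements \eqref{rubp}, \eqref{rmilduvsolsmax}.

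Global existence is where the logistic damping is decisive, and I would obtain it from a hierarchy of a priori bounds that prevent blow-up for \emph{every} $\chi>0$. Integrating the $u$-equation over $\Gamma$ and using the vertex conditions \eqref{NKsol} to annihilate the flux term gives $\tfrac{d}{dt}\int_\Gamma u=\int_\Gamma u(a-bu)$, whence Cauchy--Schwarz yields a uniform $L^1$ bound. Testing the $u$-equation against $u^{p-1}$, integrating the diffusion and chemotaxis terms by parts (the vertex contributions cancel by continuity together with the flux condition), and substituting $\lap v$ from the $v$-equation, one is left with the diffusive dissipation $-\tfrac{4(p-1)}{p^2}\int_\Gamma(\partial_x u^{p/2})^2$, the logistic damping $-b\int_\Gamma u^{p+1}$, and an aggregating term proportional to $\int_\Gamma u^{p+1}$. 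Because $\Gamma$ is one-dimensional, I would control the aggregating term by the dissipation and damping through Gagliardo--Nirenberg interpolation against the uniform $L^1$ bound, closing uniform-in-time $\elpee$ estimates for all $p\geq p_0$ and all $\chi>0$; these bootstrap to an $L^\infty$ bound and thereby rule out finite-time blow-up, so the local solution extends globally.

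Finally, positivity follows from the positivity-preserving property of $e^{t\Delta}$ (the Neumann--Kirchhoff heat semigroup obeys a maximum principle on $\Gamma$): $u\equiv0$ is a subsolution of the $u$-equation, so nonnegative, nontrivial data stay nonnegative, and a strong maximum principle on the connected graph upgrades this to $u>0$ for $t>0$; positivity of $v$ then follows since $v$ solves either the positivity-preserving resolvent equation $v=(I-\Delta)^{-1}u$ when $\tau=0$, or a parabolic equation with strictly positive source $u$ when $\tau>0$. The main obstacle throughout is the advection term $\partial_x(u\,\partial_x v)$: in the local theory it must be tamed against the half-derivative loss via the $t^{-1/2-\beta}$ smoothing, which forces the small exponent $\beta<1/8$ once the product and embedding constraints on $\Gamma$ are accounted for; in the global theory its aggregating effect must be shown subordinate to the logistic dissipation, and on a metric graph every integration by parts and every Gagliardo--Nirenberg-type product estimate must be carried out compatibly with the Neumann--Kirchhoff vertex conditions, where the vertex terms require careful handling.
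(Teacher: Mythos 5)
First, a point of comparison: this paper does not prove Theorem \ref{Lp} at all --- it is quoted from the companion work \cite{HWS}, and the present paper only imports the supporting toolkit (the fractional power spaces $\cX^\beta_p$, the embeddings of Theorem \ref{analytic-semigroup-thm2}, and the gradient smoothing estimate \eqref{tgradlplq}, itself cited from \cite{HWS}). Your outline is judged against that standard route, and your local theory is consistent with it: the contraction in $C([0,T],\cX^\beta_p)$ built on the integrable singularity $t^{-1/2-\beta}$ is exactly the intended use of \eqref{tgradlplq} combined with \eqref{fracpow} (one small technical point: $e^{t\Delta}\partial_x$ must be defined by density/duality, since \eqref{tgradlplq} is stated for $u\in C_0^\infty(\Gamma)$ and $\partial_x$ does not map the Neumann--Kirchhoff domain into itself). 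Your positivity argument via the positivity-improving Neumann--Kirchhoff heat semigroup and your $\tau=0$ global estimates are also sound in spirit.

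The genuine gap is in your global a priori bounds for the parabolic--parabolic case $\tau>0$. Testing the $u$-equation with $u^{p-1}$ and integrating by parts (vertex terms vanishing by \eqref{NKsol}), the chemotaxis term becomes $-\chi\frac{p-1}{p}\int_\Gamma u^p\,\lap v\,dx$, and your plan to ``substitute $\lap v$ from the $v$-equation'' closes the estimate only when $\tau=0$, where $\lap v=v-u$ and nonnegativity of $v$ leaves the aggregating term $\chi\frac{p-1}{p}\int_\Gamma u^{p+1}dx$ to be absorbed by the logistic damping and Gagliardo--Nirenberg interpolation against the $L^1$ bound. For $\tau>0$ the substitution yields $\lap v=\tau v_t+v-u$, and the resulting term $\tau\chi\frac{p-1}{p}\int_\Gamma u^p v_t\,dx$ has no sign and is not controlled by anything in your hierarchy, so the $L^p$ estimate does not close as written. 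The repair --- and the mechanism visible in this paper's own Section \ref{sec3}, cf.\ Lemmas \ref{bounds-lm2}--\ref{bounds-lm4} --- is to avoid substituting $\lap v$ altogether: first bound $\|\partial_x v(t)\|_{\el{\infty}}$ by Duhamel for the $v$-equation, splitting the semigroup and using $L^1\to L^q$ smoothing against the uniform $\el{1}$ bound on $u$ together with the embedding $\cX_q^\beta\hookrightarrow\hatt C^1(\overline\Gamma)$, and then estimate the chemotaxis term via
\begin{equation}
\chi(p-1)\int_\Gamma u^{p-1}u_x v_x\,dx\le \frac{2(p-1)}{p^2}\int_\Gamma\bigl(\partial_x u^{p/2}\bigr)^2dx+C\chi^2\|\partial_x v\|_{\el{\infty}}^2\int_\Gamma u^p\,dx,
\end{equation}
absorbing the first piece into the diffusive dissipation. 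With that replacement (or a coupled Lyapunov functional in $(u,v)$), your scheme closes for part (2) as well; as stated, part (2) is unproved.
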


We note that well-posedness of Keller--Segel model on subset of $\bbR^n$, $n\geq 1$ has been investigated by numerous authors, see, for example, \cite{MR4188348, MR3698165, MR3620027, MR2334836, MR2445771, MR2825180} and references therein. 

\subsection{Local asymptotic stability}
In this subsection we first  discuss  spectral properties of linearizations of parabolic-parabolic and parabolic-elliptic equations about the steady state solution $(a/b, a/b)$ and  then prove Theorem \ref{local-stability-thm}.  In particular, we show that the non-selfadjoint linearized operators have compact resolvents, hence, their spectra is discrete and compute (in general, complex) eigenvalues in terms of the eigenvalues of  Neumann-Kirchhoff Laplacian, see Lemmas \ref{eigenvalue-lm2} and \ref{eigenvalue-lm1}. Then we prove the following:
\begin{itemize}
\item if $\chi\in (0,\chi^*)$ then all eigenvalues of the linearized operators have negative real part,
\item if $\chi\in (\chi^*, \infty)$ then the linearized operators exhibit eigenvalues with positive real part. 
\end{itemize}
Let us introduce the following semi-linear mappings corresponding to parabolic-parabolic and parabolic-elliptic equations respectively
\begin{align}
	\begin{split}\label{nonlinopnew}
		&{\cF}(u,v,\tau, \chi):  \cD \times \cD {\times [0,\infty)}\times \mathbb{R} \rightarrow L^2(\Gamma),\ \\
		&{\cF}(u,v, \tau, \chi):=\begin{bmatrix}
			\partial_x\big(\partial_{x} u-\chi u\partial_xv\big)+u(a-bu)\\ \tau^{-1}(\partial_{xx}^2 v-v+u)
		\end{bmatrix},\\
	\end{split}
\end{align}
and 
\begin{align}
	\begin{split}\label{nonlinopnew0}
		&F(u,\chi):  \cD \times \mathbb{R} \rightarrow L^2(\Gamma)\times L^2(\Gamma),\ \\
		&F(u, \chi)u:=\partial_{xx}^2 u+\chi\partial_x\left(u \partial_x (\Delta-I)^{-1}u)\right)+u(a-bu), \\
\end{split}
\end{align}
where $\mathcal{D}$ is as in \eqref{D-eq}
and $\mathbb{R}^+=(0,\infty)$.
Let us recall, from \cite[Section 3.1.1, Theorem 1.4.19]{BK}, see also \cite{MR3748521}, that the spectrum of the Neumann--Kirchhoff Laplacian  $\Delta$ on a compact graph is discrete\footnote{in contrast to \cite{BK} we consider the positive Laplace operator $\Delta$ whose spectrum accumulates at $-\infty$} and bounded from above. 

\begin{lemma}\label{eigenvalue-lm2}
The linerization of $\cF(u,v,\tau, \chi)$ about $(a/b, a/b)$ is given by 
	\begin{equation}
	D_{(u,v)}\cF \left(a/b, a/b,\tau, \chi\right)=D(a,b,\tau, \chi),
	\end{equation}
where $D(a,b,\tau, \chi): \el{2}\times \el{2}\rightarrow \el{2}\times \el{2}$ is a non-selfadjoint block operator matrix given by 
\begin{align}
	&\dom\left(D(a,b,\tau, \chi)\right):= \dom(\Delta)\times\dom(\Delta),\\
	&D(a,b,\tau, \chi):=\begin{bmatrix}
		\Delta -aI_{\el{2}}& {-} \frac{\chi a}{b}\Delta\\
		\tau^{-1} I_{\el{2}}& \tau^{-1}{(}\Delta -I_{\el{2}}{)}
	\end{bmatrix}, 
\end{align}
where $\Delta$ denotes the Neumann--Kirchhoff Laplacian on a compact graph $\Gamma$, $a,b, \tau, \chi$  are positive constants. 

Then the spectrum of $D(a,b,\tau, \chi)$ is discrete, that is, it consist of isolated eigenvalues of finite multiplicity and it is given by 
\begin{equation}
\spec\left(D(a,b,\tau, \chi)\right)=\left\{\mu\in\bbC : \det (\lambda A+B-\mu T)=0, \lambda\in \spec(\Delta)\right\},
\end{equation}
where
\begin{align}
	A:=\begin{bmatrix}
		1 & 	-\frac{\chi a}{b}\\
		0& 1
	\end{bmatrix},
	B:=
	\begin{bmatrix}
		-a & 0\\
		1&-1
	\end{bmatrix}, 
T:=
	\begin{bmatrix}
		1& 0\\
		0&\tau 
	\end{bmatrix}.
\end{align}

Concretely, $\mu\in \spec\left(D(a,b,\tau, \chi)\right)$ if and only if 
\begin{equation}\label{muplus}
\mu=\frac{-\Big(1-(1+\tau)\lambda+a\tau\Big)+ \sqrt { \Big(1-(1+\tau)\lambda+a\tau\Big)^2-4 \tau \Big((a-\lambda)(1-\lambda)+\chi\frac{a}{b}\lambda\Big)}}{2\tau},
\end{equation}
or
\begin{equation}\label{muminus}
	\mu=\frac{-\Big(1-(1+\tau)\lambda+a\tau\Big)- \sqrt { \Big(1-(1+\tau)\lambda+a\tau\Big)^2-4 \tau \Big((a-\lambda)(1-\lambda)+\chi\frac{a}{b}\lambda\Big)}}{2\tau}.
\end{equation}
for some eigenvalue $\lambda$ of the Neumann--Kirchhoff Laplacian. 
\end{lemma}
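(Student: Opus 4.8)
The plan is to diagonalize the block operator matrix $D(a,b,\tau,\chi)$ simultaneously with the Neumann--Kirchhoff Laplacian $\Delta$, exploiting the fact that every entry of $D$ is a polynomial in $\Delta$. Since $\Delta$ is self-adjoint with discrete spectrum, $L^2(\Gamma)$ decomposes as an orthogonal direct sum $\bigoplus_\lambda E_\lambda$ of finite-dimensional eigenspaces $E_\lambda=\ker(\Delta-\lambda I)$, $\lambda\in\spec(\Delta)$. Because $D$ is built from $\Delta$ and scalar multiples of the identity, each subspace $E_\lambda\times E_\lambda$ is invariant under $D(a,b,\tau,\chi)$, and on that subspace $D$ acts (after identifying $E_\lambda$ with $\mathbb{C}^{\dim E_\lambda}$ via an orthonormal basis of eigenfunctions) as the $2\times 2$ matrix obtained by replacing $\Delta$ with $\lambda$. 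This is the conceptual heart of the argument and reduces the spectral problem for an operator to a family of $2\times 2$ matrix eigenvalue problems.

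First I would verify that the linearization $D_{(u,v)}\cF(a/b,a/b,\tau,\chi)$ equals the stated $D(a,b,\tau,\chi)$. This is a direct Fr\'echet-derivative computation: differentiating the nonlinear terms $u(a-bu)$ and $\chi u\,\partial_x v$ at the constant state $(a/b,a/b)$, using that $\partial_x v=0$ and $\partial_{xx}^2 v=0$ there, yields the linear part $\partial_x(\partial_x u-\chi\frac ab\partial_x v)+(a-2b\cdot\frac ab)u=\Delta u-\frac{\chi a}{b}\Delta v-a u$ in the first slot and $\tau^{-1}(\Delta v-v+u)$ in the second. Next I would establish discreteness of the spectrum. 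The cleanest route is to observe that $D(a,b,\tau,\chi)$ is a relatively bounded perturbation (by lower-order terms in $\Delta$) of the diagonal operator $\operatorname{diag}(\Delta,\tau^{-1}\Delta)$, whose resolvent is compact because $\Delta$ has compact resolvent on the compact graph $\Gamma$; hence $D$ has compact resolvent and its spectrum consists of isolated eigenvalues of finite multiplicity. Alternatively, the invariant-subspace decomposition already shows the spectrum is the union over $\lambda$ of the (finite) spectra of the $2\times 2$ blocks, which accumulates only at $\infty$ since $\spec(\Delta)$ accumulates only at $-\infty$.

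With the reduction in hand, the eigenvalue condition $\mu\in\spec(D)$ becomes: for some $\lambda\in\spec(\Delta)$, the $2\times 2$ matrix $\lambda A+B-\mu T$ is singular, i.e. $\det(\lambda A+B-\mu T)=0$. I would compute this determinant explicitly from the given $A$, $B$, $T$; it is a quadratic in $\mu$ with leading coefficient $\tau$, and solving via the quadratic formula produces exactly the two roots \eqref{muplus} and \eqref{muminus}. Care is needed to match the discriminant: expanding $\det(\lambda A+B-\mu T)$ gives $\tau\mu^2+\bigl(1-(1+\tau)\lambda+a\tau\bigr)\mu+\bigl((a-\lambda)(1-\lambda)+\chi\frac ab\lambda\bigr)=0$, whose constant and linear coefficients must be verified against the formulas. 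One subtlety is the completeness of the eigenfunction decomposition: because $D$ is non-self-adjoint, I must argue that no spectrum is missed outside $\bigcup_\lambda\spec(\lambda A+B-\mu T)$. This follows because $\{E_\lambda\}$ spans $L^2(\Gamma)$ (spectral theorem for $\Delta$) and $D$ preserves each $E_\lambda\times E_\lambda$, so the resolvent $(D-\mu)^{-1}$ exists precisely when every block $\lambda A+B-\mu T$ is invertible.

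\textbf{The main obstacle} I anticipate is the completeness and closedness bookkeeping for the non-self-adjoint operator $D$, specifically justifying that the block decomposition genuinely captures all of $\spec(D)$ rather than merely the point spectrum, and handling the domain $\dom(\Delta)\times\dom(\Delta)$ carefully so that the off-diagonal entry $-\frac{\chi a}{b}\Delta$ (an \emph{unbounded} operator, not lower-order) does not spoil the relative-compactness argument. Because this off-diagonal term has the same order as the diagonal, the perturbation estimate must instead treat the full second-order part $\begin{bmatrix}\Delta & -\frac{\chi a}{b}\Delta\\ 0 & \tau^{-1}\Delta\end{bmatrix}$ as a single block whose resolvent compactness follows from triangularizing and using compactness of $(\Delta-z)^{-1}$; the genuinely lower-order terms $-aI$, $\tau^{-1}(-I)$, $\tau^{-1}I$ are then the relatively compact perturbation. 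The parabolic-elliptic case $\tau=0$ is excluded here (the formulas \eqref{muplus}--\eqref{muminus} degenerate as $\tau\to0$), so I would note that this lemma is stated for $\tau>0$ and is complemented by the separate Lemma \ref{eigenvalue-lm1} for the elliptic regime.
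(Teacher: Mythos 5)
Your proposal is correct, and it reaches the conclusion by a genuinely more direct route than the paper. You diagonalize $D$ along the spectral decomposition of $\Delta$: since every entry of $D$ is an affine function of $\Delta$, each finite-dimensional subspace $E_\lambda\times E_\lambda$ with $E_\lambda=\ker(\Delta-\lambda)$ is invariant, and $D$ restricts there to the $2\times2$ matrix $T^{-1}(\lambda A+B)$, reducing everything to the quadratic $\tau\mu^2+\bigl(1-(1+\tau)\lambda+a\tau\bigr)\mu+\bigl((a-\lambda)(1-\lambda)+\chi\tfrac ab\lambda\bigr)=0$, which you expand correctly. The paper instead writes $D-\mu=\cT^{-1}\cA(\Delta_2-I)\bigl(I+V_\mu\bigr)$ with $V_\mu=(\Delta_2-I)^{-1}(\cA^{-1}(\cB-\cT\mu)+I)$ trace class, characterizes eigenvalues by vanishing of the Fredholm determinant $\det(I+V_\mu)$ (computed as a product over $\spec(\Delta)$ of $\det(\lambda A+B-\mu T)/(\det A\,(\lambda-1)^2)$, using exactly the commutation with spectral projections that underlies your invariance argument), and then separately shows $(D-\mu)^{-1}$ is compact to conclude the spectrum is purely discrete. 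Your approach is more elementary and avoids determinant machinery; the paper's buys a template that would survive perturbations not commuting with the spectral projections. One point you should tighten: for an infinite direct sum, ``every block invertible'' is necessary but not sufficient for $\mu\in\rho(D)$ --- you also need $\sup_\lambda\|(\lambda A+B-\mu T)^{-1}\|<\infty$. This holds here because as $\lambda\to-\infty$ the block is dominated by $\lambda A$ with $A$ invertible, so the inverses tend to $0$ in norm and the two roots $\mu_\pm(\lambda)$ escape to infinity; this is the content that the paper's Step two (compactness of the resolvent) supplies. Your remarks on the Fr\'echet derivative at the constant state and on the exclusion of $\tau=0$ are both accurate.
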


\begin{proof}
In the first step we find the  eigenvalues of $D=D(a,b,\tau, \chi)$, in the second step we will prove that $D-\mu$ is boundedly invertible, that is,  $(D-\mu)^{-1}\in\cB(\el{2}\times \el{2})$   whenever $\mu\in\bbC$ is not an eigenvalue. 

{\it Step one.}  Let $\Delta_2:=\Delta\oplus \Delta$  and
\begin{align}
&\cA:=A\otimes   I_{\el{2}} ={\begin{bmatrix}
		 I_{\el{2}} & -\frac{\chi a}{b} I_{\el{2}}\\
		0&  I_{\el{2}}
	\end{bmatrix}},\\
&\cB:=A\otimes I_{\el{2}}=\begin{bmatrix}
		-aI_{\el{2}}& 0_{\el{2}}\\
		I_{\el{2}}& -I_{\el{2}}\\
	\end{bmatrix}, \lb{ab}\\
&\cT=T\otimes I_{\el{2}}=\begin{bmatrix}
	I_{\el{2}}& 0_{\el{2}}\\
	0_{\el{2}}& \tau I_{\el{2}}\\
\end{bmatrix}.
\end{align}
Then one has
\begin{align}
D=\cT^{-1}(\cA\Delta_2  +\cB),
\end{align} 
and $\mu$ is an eigenvalue of $D$ if and only if 
\begin{equation}\label{eq:kernelcond}
\ker \left({\cA\Delta _2 }+(\cB-\cT \mu) \right)\not=\{0\}. 
\end{equation}
Since $1\not\in\spec(\Delta)$, $0\not\in\spec(\cA)$ one has
\begin{equation}
\cA \Delta_2+(\cB-\cT \mu)=\cA(\Delta_2-I)(I+(\Delta_2-I)^{-1}(\cA^{-1}(\cB-\cT \mu)+I)), 
\end{equation}
and \eqref{eq:kernelcond} is equivalent to 
\begin{equation}\lb{kercond2}
\ker(I+(\Delta_2-I)^{-1}(\cA^{-1}(\cB-\cT \mu)+I))\not=\{0\}.
\end{equation}
Since $(\Delta_2-I)^{-1}$ is a Hilbert--Schmidt operator and $(\cA^{-1}(\cB-\cT \mu)+I)$ is bounded,  the operator
\begin{equation}
V_{\mu}:=(\Delta_2-I)^{-1}(\cA^{-1}(\cB-\cT \mu)+I)
\end{equation}
is trace class. Hence, $I+V_{\mu}$ is boundedly invertible if and only if $\det (I+V_{\mu})\not=0$, cf.,e.g, \cite[Theorem VII. 7.1]{MR1130394}. Next, we compute this perturbation determinant explicitly. Let $P_{t}:=\chi_{(t,\infty)}(\Delta_2)$ be the spectral projection of $\Delta_2$ corresponding to the interval $(t,\infty)$. Since the spectrum of $\Delta_2$ is discrete and bounded from above, one has $\dim\ran(P_{t})<\infty$, $t\in\bbR$ and $\lim\limits_{t\rightarrow -\infty} P_t=I_{\el{2}\times \el{2}}$. Then one has
\begin{align}
\begin{split}\lb{detcomp}
	\det(I+V_{\mu})&=\lim\limits_{t\rightarrow-\infty}\det(I_{\ran P_{t}}+P_{t}(\Delta_2-I)^{-1}(\cA^{-1}(\cB-\mu\cT)+I)P_{t})\\
	&=\lim\limits_{t\rightarrow-\infty}\det(I_{\ran P_{t}}+P_{t}(\Delta_2-I)^{-1}P_{t}(\cA^{-1}(\cB-\mu\cT )+I)P_{t})\\
	&=\lim\limits_{t\rightarrow-\infty}\prod_{\substack{\lambda\in\spec(\Delta_2)\\ \lambda>t} }\det (I_2+(\lambda-1)^{-1}(A^{-1}(B-\mu T)+I))\\
	&=\prod_{\substack{\lambda\in\spec(\Delta_2)} }\frac{\det (\lambda A +B-\mu T)}{\det (A) (\lambda-1)^{2}},
\end{split}
\end{align} 
where we used that fact that $(\cA^{-1}(\cB-\mu\cT )+I)$ and $P_t$ commute. The latter is inferred, for example, from the matrix representation of these operators with respect to spectral the decomposition 
\begin{equation}
\el{2}\oplus \el{2}=\bigoplus_{\lambda\in\spec(\Delta_2)} \ran\chi_{\{\lambda\}}(\Delta_2),
\end{equation}
\begin{align}
(\cA^{-1}(\cB-\mu\cT )+I)&=
\begin{bmatrix}
(A^{-1}(B-\mu T )+I_2)& 0_2&...\\
0_2& (A^{-1}(B-\mu T )+I_2)& ...\\
 \vdots & \vdots &\ddots\\
\end{bmatrix},\\
P_t&=
\begin{bmatrix}
	\lambda_1 I_2& 0_2&...& 0_2&0_2&...\\
	0_2& \lambda_2 I_2& ...& 0_2&0_2&...\\
	\vdots & \vdots &\ddots& \vdots&\vdots&\vdots\\
		0_2&0_2& ...&\lambda_k I_2&0_2&0_2\\
			0_2&0_2& ...&0_2&0_2&\ddots\\
\end{bmatrix}, 
\end{align} 
where $\lambda_1\geq  ... \geq \lambda_k$ are eigenvalues of $\Delta_2$ and $\lambda_k$ is the smallest eigenvalue satisfying $\lambda_k>t$.
Then \eqref{detcomp} yields \eqref{kercond2} which in turn shows that $\mu$ is an eigenvalue of $D$ { if} and only if $\det (\lambda A+B-\mu T)=0$ for some $\lambda\in \spec(\Delta)$, that is $\mu$ is as in \eqref{muplus}, \eqref{muminus}.

{\it Step two.} For $\mu\in \bbC$ one has
\begin{align}\lb{dminusmu}
D-\mu=\cT^{-1}\cA(\Delta_2-I)(I+(\Delta_2-I)^{-1}(\cA^{-1}(\cB-\cT \mu)+I)).
\end{align}
Let us pick $\mu\in\bbC$ such that $\det (\lambda A +B-\mu T)\not=0$ for all $\lambda\in\spec(\Delta_2)$\footnote{such a $\mu$ exists because $\spec(\Delta_2)$ is discrete}. Then by step one the operator { in}  the right-hand side of \eqref{dminusmu} is boundedly invertible, hence, 
\begin{align}
(D-\mu)^{-1}=(I+(\Delta_2-I)^{-1}(\cA^{-1}(\cB-\cT \mu)+I))^{-1}(\Delta_2-I)^{-1}\cA^{-1}\cT.
\end{align}
Since $(\Delta_2-I)^{-1}$ is compact and all other factors are bounded we infer that $(D-\mu)^{-1}$ is also compact. Therefore, the spectrum of $D$ purely discrete and consists of eigenvalues given by \eqref{muplus}, \eqref{muminus}. 
\end{proof}

\begin{lemma}
\label{eigenvalue-lm1}
The linerization of $F(u, \chi)$ about $a/b$ is given by 
\begin{equation}
	D_{u}F \left(a/b, \chi\right)=D(a,b,\chi),
\end{equation}
where $D(a,b, \chi):  \el{2}\rightarrow  \el{2}$ is a self-adjoint operator given by
\begin{align}
	&\dom\big(D(a,b, \chi)\big):= \dom(\Delta),\\
	&D(a,b,\chi):=\Delta { - }\chi\frac{a}{b} \left(\Delta-I\right)^{-1} -\left(a-\chi\frac{a}{b}\right),\ 
\end{align}
where $\Delta$ denotes the Neumann--Kirchhoff Laplacian on a compact graph $\Gamma$, $a,b, \chi$  are positive constants. Then the spectrum of $D(a,b, \chi)$ is discrete, that is, it consists of isolated eigenvalues and $\mu\in\spec\big(D(a,b, \chi)\big)$ if and only if
\begin{equation}
\mu=\lambda-\frac{\chi a}{b(1-\lambda)}-\left(a-\chi\frac{a}{b}\right)
\end{equation}
for some $\lambda\in \spec(\Delta)$.
\end{lemma}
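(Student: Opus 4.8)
The plan is to compute $D_uF(a/b,\chi)$ by a direct Fréchet differentiation and then to observe that the resulting operator is a scalar function of the Neumann--Kirchhoff Laplacian $\Delta$, after which self-adjointness, discreteness of the spectrum, and the eigenvalue formula all drop out of the functional calculus of $\Delta$. This is structurally simpler than Lemma~\ref{eigenvalue-lm2}: there the linearization is a genuine $2\times2$ block operator matrix requiring the perturbation-determinant computation, whereas here the elimination of $v$ has already been carried out in the definition \eqref{nonlinopnew0} of $F$, so the linearization acts as a single scalar multiplier on each eigenspace of $\Delta$.

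First I would differentiate the three summands of $F(u,\chi)u=\partial_{xx}^2u+\chi\,\partial_x\big(u\,\partial_x(\Delta-I)^{-1}u\big)+u(a-bu)$ at the constant state $u=a/b$. The diffusion term is linear and contributes $\Delta$, and the logistic term $au-bu^2$ has derivative $a-2b\cdot\tfrac ab=-a$. The chemotaxis term is the only nonlocal and nonlinear piece: it is the value on the diagonal of the bilinear map $B(u_1,u_2):=\partial_x\big(u_1\,\partial_x(\Delta-I)^{-1}u_2\big)$, so its derivative at $a/b$ applied to a perturbation $w$ is $\chi\big(B(w,a/b)+B(a/b,w)\big)$, that is,
\[
\chi\,\partial_x\big(w\,\partial_x(\Delta-I)^{-1}\tfrac ab\big)+\chi\,\partial_x\big(\tfrac ab\,\partial_x(\Delta-I)^{-1}w\big).
\]
The crucial simplification is that $(\Delta-I)$ acts as $-I$ on constants, so $(\Delta-I)^{-1}(a/b)=-a/b$ is again constant and $\partial_x(\Delta-I)^{-1}(a/b)=0$; hence $B(w,a/b)=0$ and the first summand disappears. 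The second equals $\chi\tfrac ab\,\partial_{xx}^2(\Delta-I)^{-1}w=\chi\tfrac ab\,\Delta(\Delta-I)^{-1}w$, and the identity $\Delta(\Delta-I)^{-1}=I+(\Delta-I)^{-1}$ rewrites it as $\chi\tfrac ab\,w+\chi\tfrac ab\,(\Delta-I)^{-1}w$. Adding the three contributions $\Delta w$, $\chi\tfrac ab\,w+\chi\tfrac ab\,(\Delta-I)^{-1}w$, and $-aw$ produces $D(a,b,\chi)$ on $\dom(\Delta)$. (The rigorous Fréchet differentiability of the bilinear chemotaxis nonlinearity between the relevant spaces is routine and rests only on the boundedness estimates underlying Theorem~\ref{Lp}.)

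It then remains to read off the spectral data. Since $\Delta$ is self-adjoint and $\spec(\Delta)\subset(-\infty,0]$ so that $1\notin\spec(\Delta)$, the operator $(\Delta-I)^{-1}$ is bounded and self-adjoint; thus $D(a,b,\chi)$ is a bounded symmetric perturbation of $\Delta$ and is self-adjoint on $\dom(\Delta)$. Concretely $D(a,b,\chi)=g(\Delta)$ with $g(t)=t+\tfrac{\chi a}{b(t-1)}-(a-\chi\tfrac ab)$, a real function defined on $\spec(\Delta)$. Because $\Gamma$ is compact, $\Delta$ has compact resolvent, so $\el{2}$ admits an orthonormal basis of eigenfunctions $\varphi$ with $\Delta\varphi=\lambda\varphi$; each satisfies $(\Delta-I)^{-1}\varphi=(\lambda-1)^{-1}\varphi$ and therefore $D(a,b,\chi)\varphi=g(\lambda)\varphi$. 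Completeness of the $\varphi$'s shows these exhaust the point spectrum, and since $g(t)\to-\infty$ as $t\to-\infty$ while $\spec(\Delta)$ accumulates only at $-\infty$, the set $\{g(\lambda):\lambda\in\spec(\Delta)\}$ has no finite accumulation point, giving discreteness. Rewriting $g(\lambda)=\lambda-\tfrac{\chi a}{b(1-\lambda)}-(a-\chi\tfrac ab)$ yields the stated eigenvalue formula. The only step carrying genuine content is the differentiation of the nonlocal chemotaxis term together with the observation that the constant steady state annihilates one of the two product-rule terms; the resolvent identity $\Delta(\Delta-I)^{-1}=I+(\Delta-I)^{-1}$ brings the operator into closed form, and everything afterward is the functional calculus of $\Delta$.
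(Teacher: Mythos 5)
Your proof is correct and follows essentially the same route as the paper: the paper's entire proof is the observation that $D(a,b,\chi)=f(\Delta)$ with $f(t)=t-\chi ab^{-1}(1-t)^{-1}-(a-\chi ab^{-1})$, after which the spectral theorem and the compactness of the resolvent of $\Delta$ give self-adjointness, discreteness, and the eigenvalue formula; you additionally supply the Fr\'echet-derivative computation that the paper omits, namely that $B(w,a/b)=0$ because $(\Delta-I)^{-1}$ sends constants to constants, and that $\Delta(\Delta-I)^{-1}=I+(\Delta-I)^{-1}$ puts the remaining term in closed form. One remark: your computation yields $\Delta+\chi\tfrac ab(\Delta-I)^{-1}-(a-\chi\tfrac ab)$, with a plus sign in front of the resolvent term, whereas the displayed formula for $D(a,b,\chi)$ in the lemma carries a minus sign; your sign is the one consistent with the lemma's own eigenvalue formula $\mu=\lambda-\frac{\chi a}{b(1-\lambda)}-\left(a-\chi\frac ab\right)$, with the function $f$ in the paper's proof, and with the threshold $\chi(\lambda)$ used throughout, so the minus sign in the statement should be read as $-\chi\tfrac ab\,(I-\Delta)^{-1}$ (a typo, not a gap in your argument).
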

\begin{proof}
Let $f(t):=t{ -} \chi a b^{-1}(1-t)^{-1}-(a-\chi ab^{-1})$, $t\leq0$. Then $D(a,b,\chi)=f(\Delta)$ and the assertions follow form the spectral theorem combined with the fact that $\Delta$ has compact resolvent.
\end{proof}

We now ready to prove Theorem \ref{local-stability-thm}.

\begin{proof}[Proof of Theorem \ref{local-stability-thm}]  { To prove local asymptotic stability of $(\frac{a}{b},\frac{a}{b})$ in Part (1) and the instability of $(\frac{a}{b},\frac{a}{b})$  in Part (2), it suffices to show that the spectrum of the linearized operator is a subset of $\{z\in\bbC: \Re z <0\}$ if $0<\chi<\chi^*$, and that it intersects  the set $\{z\in \bbC:\Re z>0\}$  if $\chi>\chi^*$, cf., e.g.,  \cite[Theorem 5.1.1]{Henry}. We prove this for the cases $\tau=0$ and $\tau>0$ separately.}

\smallskip

{First, consider the case that $\tau=0$.}   Recall  \eqref{nonlinopnew0} and its linearization $D(a,b, \chi)$ from  Lemma \ref{eigenvalue-lm1}. Then $\mu\in\spec\big(D(a,b, \chi)\big)$ if and only if
\begin{equation}
	\mu=\lambda{ -a}+\chi\frac{a}{b}\left(1-\frac{1}{1-\lambda}\right).
\end{equation}
for some $\lambda\in\spec(\Delta)$. Since $\lambda\leq 0$, one has $\mu=\mu(\chi)$ is a non-decreasing and vanishes at  
\begin{equation}
\chi(\lambda):=\frac{ b(\lambda-a)(1-\lambda)}{a\lambda}, \quad \lambda\in\spec(\Delta).
\end{equation}
Hence, $\mu<0$ whenever $0<\chi<\min\{\chi(\lambda): \lambda\in\spec(\Delta)\}=\chi^*$. That is 
\begin{equation}
\spec\left(D(a,b, \chi)\right)\subset \{z\in\bbC: \Re z <0\}, \chi\in(0,\chi^*).
\end{equation}
{Moreover, if $\chi>\chi^*$, then for some $\lambda\in \spec(\Delta)$ one has $\mu=\lambda{-a}+\chi\frac{a}{b}\left(1-\frac{1}{1-\lambda}\right)>0$, which concludes the proof of the case $\tau=0$.}

\smallskip

{ Next, we consider the case$\tau>0$.}  Recall  \eqref{nonlinopnew0} and its linearization $D(a,b, \tau, \chi)$ from Lemma \ref{eigenvalue-lm1}. Then the spectrum of $D(a,b, \tau, \chi)$ is given by the following eigenvalues 
\begin{equation}
\mu_{\pm}(\lambda,\chi)=\frac{-\Big(1-(1+\tau)\lambda+a\tau\Big)\pm\sqrt { \Big(1-(1+\tau)\lambda+a\tau\Big)^2-4 \tau \Big((a-\lambda)(1-\lambda)+\chi\frac{a}{b}\lambda\Big)}}{2\tau}
\end{equation}
for
$\lambda\in\spec(\Delta)$. First, let us observe that 
\begin{equation}
1-(1+\tau)\lambda+a\tau>0. 
\end{equation}
Hence $\Re(\mu_-(\lambda,\chi))<0$ for all $\lambda\in\spec(\Delta)$, $\chi>0$. 

To show $\Re(\mu_+(\lambda,\chi))<0$ for all  $\lambda\in\spec(\Delta)$, $\chi\in(0, \chi^*)$, we first note that 
\begin{equation}
\mu_+(0,\chi)=\frac{-(1+a\tau)+\sqrt { \Big(1+a\tau\Big)^2-4 \tau a}}{2\tau}<0.
\end{equation} 
If $\lambda\in\spec(\Delta)\setminus\{0\}$ then either 
\begin{equation}
 \Big(1-(1+\tau)\lambda+a\tau\Big)^2\geq 4 \tau \Big((a-\lambda)(1-\lambda)+\chi\frac{a}{b}\lambda\Big),
\end{equation}
in which case $\Re(\mu_+(\lambda,\chi))<0$, or
\begin{equation}
	\Big(1-(1+\tau)\lambda+a\tau\Big)^2< 4 \tau \Big((a-\lambda)(1-\lambda)+\chi\frac{a}{b}\lambda\Big),
\end{equation}
in which case $\chi\mapsto \mu_+(\lambda,\chi)$ is a real-valued, non-decreasing function of $\chi$. In the latter case, the equation $\mu_+(\lambda,\chi)=0$ reads
\begin{equation}
\Big(1-(1+\tau)\lambda+a\tau\Big)=\sqrt { \Big(1-(1+\tau)\lambda+a\tau\Big)^2-4 \tau \Big((a-\lambda)(1-\lambda)+\chi\frac{a}{b}\lambda\Big)}
\end{equation}
and yields
\begin{equation}
\chi=\frac{ b(\lambda-a)(1-\lambda)}{a\lambda}, \lambda\in\spec(\Delta).
\end{equation}
Hence, $\Re(\mu_+(\lambda,\chi))=\mu_+(\lambda,\chi)<0$ whenever $\chi\in(0,\chi^*)$ as required.  { To finish the proof, we note that if $\chi>\chi^*$ then there exists $\lambda\in \spec(\Delta)$ such that $\mu_+(\lambda,\chi) >0$. }
\end{proof}

\subsection{Local bifurcation}

{ In this subsection, we prove Theorem  \ref{prop4.4} via  Crandall--Rabinowitz's Theorem, cf. \cite[Theorem 8.3.1]{MR1956130}.}

\begin{proof}[Proof of Theorem \ref{prop4.4}]Our goal is to verify conditions of Crandall--Rabinowitz's Theorem as stated in \cite[Theorem 8.3.1]{MR1956130}. To that end, we first note that steady state  solutions of both parabolic-parabolic and parabolic-elliptic equations stem for the same system
	\begin{align}
		\begin{cases}
				\partial_x\big(\partial_{x} u-\chi u\partial_xv\big)+u(a-bu)=0,\\ 
				\partial_{xx}^2 v-v+u=0,
		\end{cases}
	\end{align}
or, equivalently, $\mathcal{F}(u,v,1, \chi)=0$, cf. \eqref{nonlinopnew}. The partial derivative with respect to $(u,v)$ of this nonlinear mapping is given by   
	\begin{align}
		&L:=D_{(u,v)}\cF \left(u, v, 1, \chi\right)\in\cB(\cD\times \cD\times\bbR, L^2(\Gamma)\times L^2(\Gamma)),	\\
		&D_{(u,v)}\cF(u,v, \chi)=L_2+L_1\text{ where},\\
		&L_2\begin{bmatrix}
			f\\g
		\end{bmatrix} := 
		\begin{bmatrix}
			I_{L^2(\Gamma)}& -\chi u I_{L^2(\Gamma)}\\
			0_{L^2(\Gamma)}&I_{L^2(\Gamma)}
		\end{bmatrix}
		\begin{bmatrix}
			\Delta & 	0_{L^2(\Gamma)}\\
			0_{L^2(\Gamma)}&\Delta
		\end{bmatrix},\lb{el2}\\
		&L_1\begin{bmatrix}
			f\\g
		\end{bmatrix} := \begin{bmatrix}
			-\chi v'f'-\chi v''f-\chi u'g'+af-2buf \\ -g +f
		\end{bmatrix},\lb{el1}
	\end{align}
	here $f,g\in\cD$ and $\cD$ is considered as a Banach space with $\hatt W^{2,2}(\Gamma)-$norm.  This shows that $F\in C^2\left( \cD\times \cD\times\bbR, L^2(\Gamma)\times L^2(\Gamma)\right)$. Next, we show that $D_{(u,v)}\cF(u,v,1, \chi)$ is Fredholm with index zero as an operator from  $\cD\times \cD\times\bbR$  to $L^2(\Gamma)\times L^2(\Gamma)$.  Let us recall that the Neumann--Kirchhoof Laplacian $\Delta\in \cB(\hatt W^{2,2}(\Gamma), L^2(\Gamma))$ is Fredholm with index zero and the first term in the right-hand side of \eqref{el2}  is Fredhlom in $L^2(\Gamma)\times L^2(\Gamma)$ with index zero. Therefore by \cite[Theorem 3.16]{MR929030}, $L_2$ is Fredholm with index zero as a mapping from $\hatt W^{2,2}(\Gamma)$ to $L^2(\Gamma)$. Next, $L_1\in\cB(\hatt W^{2,2}(\Gamma), \hatt W^{1,2}(\Gamma))$ and the embedding $\hatt W^{1,2}(\Gamma)\hookrightarrow L^2(\Gamma)$ is compact, therefore $L_1$ is compact as a mapping from $\hatt W^{2,2}(\Gamma)$ to $L^2(\Gamma)$. Thus by \cite[Theorem 3.17]{MR929030} the operator $L=L_1+L_2$ is a Fredholm with index zero. 
	
	Recalling $\cA, \cB, \Delta_2$ from \eqref{ab} we obtain
	\begin{align}
		L&=D_{(u,v)}\cF \left(a/b, a/b, 1, \chi\right)=\cA \Delta_2+\cB.
	\end{align}
	Hence, one has
	\begin{align}
		\ker L=\ker \left( \begin{bmatrix}
			\Delta & 	0_{L^2(\Gamma)}\\
			0_{L^2(\Gamma)}&\Delta
		\end{bmatrix}+\begin{bmatrix}
			\left(-a{+\frac{\chi a}{b}}\right)I_{L^2(\Gamma)}& 	{ -\frac{\chi a}{b}I_{L^2(\Gamma)}}\\
			I_{L^2(\Gamma)}&-I_{L^2(\Gamma)}
		\end{bmatrix} \right).
	\end{align}
	Then $\ker L\not=\{0\}$ if and only if for some $\lambda\in\spec(\Delta)$ one has
	\begin{equation}
		\det \left( \begin{bmatrix}
			\lambda& 	0_{L^2(\Gamma)}\\
			0_{L^2(\Gamma)}&\lambda
		\end{bmatrix}+\begin{bmatrix}
			\left(-a+{\frac{\chi a}{b}}\right)I_{L^2(\Gamma)}& 	{-\frac{\chi a}{b}I_{L^2(\Gamma)}}\\
			I_{L^2(\Gamma)}&-I_{L^2(\Gamma)}
		\end{bmatrix} \right)=0,
	\end{equation}
	that is, if and only if one has
	\begin{equation}
		\left(a-\frac{\chi a}{b}-\lambda\right)(1-\lambda)+\frac{\chi a}{b}=0,
	\end{equation}
	or, equivalently, the identity \eqref{chieq} holds. By assumptions, we then obtain $\dim \ker (L)=1$ and $L\xi_0=0,\ \xi_0:=[
	u, v]^{\top}\varphi$. 
	
	Let us now show that the transversality condition in \cite[Theorem 8.3.1]{MR1956130} is also satisfied. That is, for  $K:=D^2_{(u,v),\chi}\cF(a/b, a/b,1, \chi )$ we show that $K [\xi_0, 1]^{\top}\not \in \ran (L)$. It suffices to show $\ker(L^*)=0$. Since $L^*=(\cA^{-1})^{*}(\Delta_2+(\cB\cA^{-1})^*)$, we note that $\ker(L^{*})\not =\{0\}$ yields a $\lambda\in\spec(\Delta)$ such that 
	\begin{equation}
		\det( \lambda+(BA^{-1})^*)=0,
	\end{equation}
	that is 
	\begin{equation}
		(\lambda-a)(\lambda-1)=0,
	\end{equation}
	which contradicts $\lambda\leq 0$, $a>0$. 
\end{proof}

\section{Global stability of constant steady states}\lb{sec3}

{ In this section, we study 
the global stability of the constant solution $(\frac{a}{b},\frac{a}{b})$   of \eqref{parabolic-parabolic-eq}, \eqref{NKsol} and prove Theorems \ref{global-stability-thm} and \ref{4.4}. 
Throughout this section, $C$ denotes a positive constant independent of $a,b, \chi$ and the solutions  of \eqref{parabolic-parabolic-eq}, \eqref{NKsol}. We first establish some lemmas and then prove  Theorems \ref{global-stability-thm}  and \ref{4.4}. }

\begin{lemma}
\label{bounds-lm1}
Assume the setting of Theorem \ref{global-stability-thm}. Then for $T>0$ one has
\begin{equation}
\label{bounds-eq1}
\limsup_{t\to\infty}  \int_\Gamma  u(t;u_0,v_0)dx\le \frac{a|\Gamma|}{b}.
\end{equation}
\end{lemma}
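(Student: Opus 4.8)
The plan is to track the total mass $m(t):=\int_\Gamma u(t,\cdot;u_0,v_0)\,dx$ and show that it obeys a logistic differential inequality whose unique positive equilibrium is exactly $a|\Gamma|/b$. First I would integrate the first equation of \eqref{parabolic-parabolic-eq} over $\Gamma$. The reaction term contributes $\int_\Gamma u(a-bu)\,dx$, while the advection-diffusion term $\partial_x\big(\partial_x u-\chi u\partial_x v\big)$ is an exact derivative along each edge, so integrating it over an edge $e=[0,\ell_e]$ produces only the boundary values of the flux $J_e:=\partial_x u_e-\chi u_e\partial_x v_e$ at the two endpoints. Summing over all edges and collecting terms at each vertex $\vartheta$ gives, in terms of inward normal derivatives, the quantity $\sum_{e\sim\vartheta}\big(\partial_\nu u_e(\vartheta)-\chi u_e(\vartheta)\partial_\nu v_e(\vartheta)\big)$.

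The key point is that this quantity vanishes for every $\vartheta$ by virtue of the Neumann--Kirchhoff conditions \eqref{NKsol}: the diffusive part $\sum_{e\sim\vartheta}\partial_\nu u_e(\vartheta)$ is the conservation of current for $u$, and in the taxis part the continuity of $u$ at $\vartheta$ lets me factor out the common value $u(\vartheta)$, reducing it to $\chi\,u(\vartheta)\sum_{e\sim\vartheta}\partial_\nu v_e(\vartheta)=0$ by the conservation of current for $v$. Hence the flux term drops out entirely and I obtain the exact identity
\[
\frac{d}{dt}m(t)=a\,m(t)-b\int_\Gamma u^2\,dx.
\]

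Next I would bound $\int_\Gamma u^2\,dx$ from below by the Cauchy--Schwarz inequality $\int_\Gamma u^2\,dx\ge |\Gamma|^{-1}\big(\int_\Gamma u\,dx\big)^2=|\Gamma|^{-1}m(t)^2$, which turns the identity into the logistic differential inequality $m'(t)\le a\,m(t)-\frac{b}{|\Gamma|}m(t)^2$. Since $u_0\not\equiv 0$ is non-negative, Theorem \ref{Lp}(2) guarantees $u>0$ for $t>0$, so $m(t)>0$, and I may compare $m$ with the solution $y$ of the scalar logistic ODE $y'=a y-\frac{b}{|\Gamma|}y^2$ with a matching positive initial value. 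Standard comparison then yields $m(t)\le y(t)$ together with $y(t)\to a|\Gamma|/b$ as $t\to\infty$, whence $\limsup_{t\to\infty}m(t)\le a|\Gamma|/b$, which is \eqref{bounds-eq1}.

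The logistic comparison step is classical and routine. The one step requiring genuine care is the vanishing of the flux term, and this is where I expect the main obstacle: I must carry out the inward-normal-derivative bookkeeping across edges and verify that the \emph{nonlinear} taxis flux $\chi u\partial_x v$, not merely the linear diffusion flux, cancels at every vertex. This is precisely the place where the continuity of $u$ and the conservation-of-current condition for $v$ must be invoked simultaneously, and it is the structural feature of the graph that makes the mass estimate work.
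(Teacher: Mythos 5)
Your proposal is correct and follows essentially the same route as the paper: integrate the first equation over $\Gamma$ (the flux term vanishes by the Neumann--Kirchhoff conditions), apply Cauchy--Schwarz to get the logistic differential inequality for the mass, and conclude by comparison with the scalar logistic ODE. Your explicit verification that the nonlinear taxis flux cancels at each vertex --- using continuity of $u$ to factor out $u(\vartheta)$ and then the Kirchhoff condition for $v$ --- is a detail the paper leaves implicit, and it is carried out correctly.
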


\begin{proof}
Integrating both sides  of the first equation in \eqref{parabolic-parabolic-eq} over $\Gamma$ we obtain
$$
\frac{d}{dt}\int_\Gamma u(t;u_0,v_0)dx=a\int_\Gamma u(t;u_0,v_0)dx-b\int_\Gamma u^2(t;u_0,v_0)dx.
$$
Combining this with
$$
\int_\Gamma u^2(t;u_0,v_0)dx\ge \frac{1}{|\Gamma|}\left(\int_\Gamma u(t;u_0,v_0)dx\right)^2,
$$
we arrive at
\begin{equation}
\label{L1-estimate-eq1}
\frac{d}{dt}\int_\Gamma u(t;u_0,v_0)dx\le a\int_\Gamma u(t;u_0,v_0)-\frac{b}{|\Gamma|}\left(\int_\Gamma u(t;u_0,v_0)\right)^2dx.
\end{equation}
Then, since $f(t)=|\Gamma|ab^{-1}$ solves the differential equation $f'=af-b(|\Gamma|)^{-1}f^2$, the comparison principle yields
$$
\limsup_{t\to\infty}\int_\Gamma u(t;u_0,v_0)dx\le \frac{a|\Gamma|}{b},
$$
as asserted.
\end{proof}

\begin{lemma}
\label{bounds-lm2}
Assume the setting of Theorem \ref{global-stability-thm}. Then there exists a constant $C=C(\Gamma)>0$ such that 
\begin{equation}
\label{bounds-eq2}
\limsup_{t\to\infty} \| v(t)\|_{ \hatt C^1(\overline\Gamma)} {\le C \frac{a}{b}}.
\end{equation}
\end{lemma}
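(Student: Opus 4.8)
The plan is to treat the second equation of \eqref{parabolic-parabolic-eq} as an inhomogeneous heat equation for $v$ driven by the source $u$, and to run a parabolic smoothing (bootstrap) argument built on the analytic semigroup generated by $A:=\tau^{-1}(I-\Delta)$. Since $\spec(\Delta)\subset(-\infty,0]$, the operator $A$ is positive, $-A$ generates an analytic semigroup with $\|e^{-tA}\|_{\cB(L^r(\Gamma))}\le e^{-t/\tau}$, and one has the smoothing bounds $\|A^{\beta}e^{-tA}\|_{\cB(L^r(\Gamma))}\le C\,t^{-\beta}e^{-t/(2\tau)}$ together with the one-dimensional ultracontractive estimates $\|e^{-tA}\|_{L^p(\Gamma)\to L^q(\Gamma)}\le C\,t^{-\frac12(1/p-1/q)}e^{-t/\tau}$, $1\le p\le q\le\infty$. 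Writing $v$ through the variation-of-constants formula
\[
v(t)=e^{-tA}v_0+\tau^{-1}\int_0^t e^{-(t-s)A}u(s)\,ds,
\]
the initial-data term decays exponentially, so everything reduces to estimating the Duhamel integral, the only available input on the source being $\limsup_{t\to\infty}\|u(t)\|_{L^1(\Gamma)}\le a|\Gamma|/b$ from Lemma \ref{bounds-lm1}.

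First I would bound $v$ in every $L^q(\Gamma)$. Inserting the $L^1$-bound on $u$ and using $\|e^{-(t-s)A}\|_{L^1\to L^q}\le C(t-s)^{-\frac12(1-1/q)}e^{-(t-s)/\tau}$, the exponent $\tfrac12(1-1/q)<1$ for every $q\le\infty$, so the time integral converges and yields $\limsup_{t\to\infty}\|v(t)\|_{L^q(\Gamma)}\le C\,a/b$; upgrading $L^q$ to the fractional power space $\cX^{\beta}_r$ and using the embedding $\cX^{\beta}_r\hookrightarrow \hatt C^{\nu}(\overline{\Gamma})$ already gives $\limsup_{t\to\infty}\|v(t)\|_{\hatt C^{\nu}(\overline{\Gamma})}\le C\,a/b$ for every $\nu<1$.

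To reach $\hatt C^1(\overline{\Gamma})$ I would apply $A^{\beta}$ to the Duhamel formula and use $\cX^{\beta}_r\hookrightarrow \hatt C^1(\overline{\Gamma})$, valid once $2\beta-1/r>1$. Factoring $A^{\beta}e^{-(t-s)A}$ and combining the two estimates above produces a kernel with singularity $(t-s)^{-\beta-\frac12(1/p-1/r)}$ when the source is measured in $L^p$; this is time-integrable precisely when $\beta+\tfrac12(1/p-1/r)<1$, which is compatible with $2\beta-1/r>1$ if and only if $p>1$. Thus the borderline $p=1$ fails, and the heart of the proof is to upgrade the source to $\limsup_{t\to\infty}\|u(t)\|_{L^p(\Gamma)}\le C\,a/b$ for some $p>1$ (say $p=2$). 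I would obtain this from an $L^p$ energy estimate for the first equation in \eqref{parabolic-parabolic-eq}: multiplying by $u^{p-1}$, integrating over $\Gamma$ under the Neumann--Kirchhoff conditions \eqref{NKsol}, and integrating the taxis term by parts twice reduces the chemotaxis contribution to $-\tfrac{\chi(p-1)}{p}\int_\Gamma u^p v''$; substituting $v''=\tau v_t+v-u$ and using the positivity $u,v>0$ (so that $-\int_\Gamma u^p v\le0$) together with the logistic dissipation $-b\|u\|_{L^{p+1}}^{p+1}$ lets one absorb all terms except the coupling term $\int_\Gamma u^p v_t$.

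The main obstacle is exactly this term $\int_\Gamma u^p v_t$, which is absent in the parabolic-elliptic case $\tau=0$ and carries no definite sign. I would control it by forming a combined Lyapunov-type functional of $(u,v)$ whose time derivative cancels $\int_\Gamma u^p v_t$ (using once more $\tau v_t=v''-v+u$ and the $L^\infty$-bound on $v$ from the previous step), thereby closing a differential inequality of the form $\tfrac{d}{dt}\|u\|_{L^p}^p\le -c\,\|u\|_{L^{p+1}}^{p+1}+C$. Since on a compact graph $\|u\|_{L^{p+1}}^{p+1}$ dominates a power of $\|u\|_{L^p}^p$, this produces an absorbing set in $L^p(\Gamma)$ and hence the required bound $\limsup_{t\to\infty}\|u(t)\|_{L^p(\Gamma)}\le C\,a/b$. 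Feeding this $L^p$-bound back into the fractional-power Duhamel estimate then yields $\limsup_{t\to\infty}\|v(t)\|_{\hatt C^1(\overline{\Gamma})}\le C\,a/b$, as claimed.
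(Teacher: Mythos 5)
Your first two paragraphs reproduce the paper's own argument: Duhamel for the $v$-equation started at a large time $t_0$, the smoothing bound $\|(I-\Delta)^\beta e^{(\Delta-I)s}\|\le Cs^{-\beta}e^{-cs}$ composed with the $L^1\to L^q$ ultracontractive estimate, the $L^1$ input from Lemma \ref{bounds-lm1}, and the embedding $\cX_q^\beta\hookrightarrow \hatt C^1(\overline\Gamma)$ under $2\beta-\frac1q>1$. Your exponent bookkeeping is also right, and it is worth saying plainly that the difficulty you flag is present in the paper's computation as well: the kernel produced there is $(t-s)^{-\beta-\frac12(1-\frac1q)}$, and the constraint $2\beta-\frac1q>1$ forces $\beta+\frac12(1-\frac1q)>1$, so that integral diverges at $s=t$. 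An $L^1$ bound on the source is genuinely borderline for a $C^1$ conclusion on $v$, and some additional integrability of $u$ (or an interpolation/iteration step) is indeed needed; your partial conclusions ($\limsup\|v\|_{L^q}\le Ca/b$ for all $q$, and $\limsup\|v\|_{\hatt C^\nu}\le Ca/b$ for all $\nu<1$) are correct and are as far as the $L^1$ information takes you.

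The gap is in your repair. Everything now rests on $\limsup_{t\to\infty}\|u(t)\|_{L^p(\Gamma)}\le C a/b$ for some $p>1$, and the one step that would deliver it --- the ``combined Lyapunov-type functional whose time derivative cancels $\int_\Gamma u^p v_t$'' --- is asserted, not constructed. That term is precisely the notorious obstruction of the parabolic--parabolic case: the naive identity $\int u^pv_t=\frac{d}{dt}\int u^pv-p\int u^{p-1}u_tv$ reintroduces $u_t$ and does not close, and the functionals that do work in the literature (e.g. $\int u^p+\int|v_x|^{2q}$) require a separate nontrivial gradient estimate for $v$, which is the quantity you are trying to bound. You also cannot borrow the paper's $L^4$ bound on $u$ (Lemma \ref{bounds-lm3}), since its proof invokes the $\hatt C^1$ bound on $v$ from the present lemma --- that route is circular. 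Finally, your double integration by parts leaves the term $+\frac{\chi(p-1)}{p}\int_\Gamma u^{p+1}$, which must be absorbed by $-b\int_\Gamma u^{p+1}$; this forces $p$ close to $1$ when $\chi$ is large and makes the resulting constant depend on $\chi$, whereas the constant in \eqref{bounds-eq2} is claimed to depend only on $\Gamma$. In short: correct diagnosis, same skeleton as the paper, but the new ingredient that your own diagnosis shows to be necessary is missing.
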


\begin{proof}
Let us fix $\beta\in \left(\frac{1}{2},1\right)$ and $q>1$ satisfying $
2\beta -q^{-1}>1$. The by Theorem \ref{analytic-semigroup-thm2} one has
\begin{equation}\lb{emb1}
\cX_q^\beta \hookrightarrow \hatt C^1(\overline\Gamma).
\end{equation}
By the Duhamel principle, the second equation in \eqref{parabolic-parabolic-eq} yields
\[v(t)=e^{(\Delta-I)\frac{(t-t_0)}{\tau}}v(t_0)+\frac{1}{\tau}\int_{t_0}^{t}e^{(\Delta-I)\frac{t-s}{\tau}}u(s)ds,\]
where we abbreviated $v(t)=v(t;u_0,v_0)$. Then  one obtains
\begin{align}
 & { \| v(t)\|_{\hatt C^1(\overline \Gamma)}} \underset{\eqref{emb1}}{\leq}    C \|v(t)\|_{\cX_q^\beta}\le C \|  e^{(\Delta-I)\frac{t-t_0}{\tau}}(I-\Delta)^\beta v(t_0)\|_{\elq} \\
 &\hspace{5cm}+C\int_{t_0}^{t}\|(I-\Delta)^\beta  e^{(\Delta-I)\frac{t-s}{2\tau}}e^{(\Delta-I)\frac{t-s}{2\tau}}u(s)\|_{\elq}ds \nonumber\\
&\le C \|  e^{(\Delta-I)\frac{t-t_0}{\tau}}(I-\Delta)^\beta v(t_0)\|_{\elq}\\
&\hspace{4cm}+C \int_{t_0}^{t}  \Big(\frac{t-s}{2\tau}\Big)^{-\beta}e^{-\frac{t-s}{4\tau}} \| e^{(\Delta-I)\frac{t-s}{2\tau}}u(s)\|_{\elq} ds\nonumber\\
&\le C \|  e^{(\Delta-I)\frac{t-t_0}{\tau}}(I-\Delta)^\beta v(t_0)\|_{\elq}\\
&\hspace{4cm}+C\int_{t_0}^{t}\left(\frac{t-s}{2\tau}\right)^{-\beta-\frac{1}{2}(1-\frac{1}{q})}e^{-\frac{t-s}{2\tau}}\|u(s)\|_{\el{1}}ds\\
 &\leq  Ce^{\frac{t_0-t}{2\tau}}\| (I-\Delta)^\beta v(t_0)\|_{L^q(\Gamma)} +C\sup_{t\in [t_0,\infty)} \|u(t)\|_{\el{1}} \int_{t_0}^{t}\left(\frac{t-s}{2\tau}\right)^{-\beta-\frac{1}{2}(1-\frac{1}{q})}e^{-\frac{t-s}{2\tau}}ds.
\end{align}
{This implies  \eqref{bounds-eq2} by choosing sufficiently large $t_0$ such that $\sup_{t\in[t_0,\infty)}\|u(t)\|_{L^1(\Gamma)}\le 2|\Gamma|\frac{a}{b}$ and then letting $t\to\infty$. }
\end{proof}

\begin{lemma}
\label{bounds-lm3}
Assume the setting of Theorem \ref{global-stability-thm}. Then one has
\begin{equation}
\label{bounds-eq3}
\limsup_{t \to \infty} \|u(t)\|_{\el{4}}\leq  C\left({\frac{a}{b}}+\frac{a^2\chi^2}{b^3}\right).
\end{equation}
\end{lemma}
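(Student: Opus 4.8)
The plan is to derive a differential inequality for the scalar quantity $y(t):=\|u(t)\|_{\el{4}}^4=\int_\Gamma u^4\,dx$ by means of an $L^4$ energy estimate, and then to read off the claimed bound by comparison with a logistic ODE. Since the solution is classical and strictly positive for $t>0$ by Theorem \ref{Lp}(2) and satisfies the Neumann--Kirchhoff conditions \eqref{NKsol}, I would multiply the first equation in \eqref{parabolic-parabolic-eq} by $u^3$ and integrate over $\Gamma$, obtaining
\[
\tfrac14\,y'(t)=\int_\Gamma u^3\,\partial_x\big(\partial_x u-\chi u\,\partial_x v\big)\,dx+a\int_\Gamma u^4\,dx-b\int_\Gamma u^5\,dx.
\]
Integrating by parts edge by edge, all vertex contributions cancel: continuity of $u$ across each vertex $\vartheta$ lets one factor $u(\vartheta)^3$ (respectively $u(\vartheta)^4$) out of the boundary sum, which then vanishes by the flux conservation in \eqref{NKsol} for $u$ (respectively for $v$). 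This leaves the diffusion dissipation $-3\int_\Gamma u^2(\partial_x u)^2\,dx$ together with the advection term $3\chi\int_\Gamma u^3\,\partial_x u\,\partial_x v\,dx$.

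Next I would estimate the advection term by Young's inequality, writing $3\chi\,u^3\,\partial_x u\,\partial_x v=(\sqrt3\,u\,\partial_x u)(\sqrt3\,\chi\, u^2\,\partial_x v)$, so that
\[
3\chi\int_\Gamma u^3\,\partial_x u\,\partial_x v\,dx\le \tfrac32\int_\Gamma u^2(\partial_x u)^2\,dx+\tfrac32\,\chi^2\|\partial_x v\|_{\el{\infty}}^2\int_\Gamma u^4\,dx.
\]
The first term is absorbed into the diffusion dissipation, leaving $-\tfrac32\int_\Gamma u^2(\partial_x u)^2\,dx\le0$, which I discard. This yields
\[
\tfrac14\,y'(t)\le\Big(a+\tfrac32\chi^2\|\partial_x v(t)\|_{\el{\infty}}^2\Big)\,y(t)-b\int_\Gamma u^5\,dx.
\]

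To close the inequality I would invoke Lemma \ref{bounds-lm2}: fix $t_0$ so large that $\|\partial_x v(t)\|_{\el{\infty}}\le 2C\tfrac{a}{b}$ for all $t\ge t_0$, which bounds the coefficient of $y$ by a constant $\bar K\le C\big(a+\tfrac{a^2\chi^2}{b^2}\big)$. Moreover, since $|\Gamma|<\infty$, Hölder's inequality gives $\int_\Gamma u^5\,dx\ge|\Gamma|^{-1/4}y^{5/4}$, so on $[t_0,\infty)$ the function $y$ obeys the logistic differential inequality $y'\le 4\bar K\,y-4b|\Gamma|^{-1/4}y^{5/4}$. The comparison principle then gives $\limsup_{t\to\infty}y(t)\le y^*$, where $y^*=(\bar K|\Gamma|^{1/4}/b)^4$ is the positive equilibrium of the associated scalar ODE; taking fourth roots and absorbing $|\Gamma|$-dependent constants into $C$ produces $\limsup_{t\to\infty}\|u(t)\|_{\el{4}}\le C\big(\tfrac{a}{b}+\tfrac{a^2\chi^2}{b^3}\big)$, as asserted.

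The main obstacle is the advection term: one must split off exactly enough of the gradient factor to be absorbed by the diffusion dissipation while producing a coefficient on $\int_\Gamma u^4\,dx$ of the correct order in $\chi$ and in $\|\partial_x v\|_{\el{\infty}}$. Combined with the a priori bound $\|\partial_x v\|_{\el{\infty}}\lesssim a/b$ from Lemma \ref{bounds-lm2}, this is precisely what fixes the powers $a^2\chi^2/b^3$ in the final estimate. The remaining points---justifying the vertex cancellations in the integration by parts and the logistic comparison argument on $[t_0,\infty)$---are routine.
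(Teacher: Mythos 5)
Your proposal is correct and follows essentially the same route as the paper: multiply by $u^3$, integrate by parts using the Neumann--Kirchhoff conditions to kill the vertex terms, absorb the taxis term into the dissipation via Young's inequality, bound $\int_\Gamma u^5\,dx$ from below by H\"older, and close via Lemma \ref{bounds-lm2} and a logistic comparison. The only (immaterial) difference is the split in Young's inequality—you absorb half the dissipation while the paper absorbs all of it—which changes the constant but not the order $a/b+a^2\chi^2/b^3$.
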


\begin{proof} Multiplying both sides of the first equation in \eqref{parabolic-parabolic-eq} by $u^3$ and integrating over $\Gamma$ we obtain
	\begin{align}\lb{firstequmultiplied}
	\int_{\Gamma}u_tu^3dx=\int_{\Gamma}u^3\partial_x\big(\partial_{x} u-\chi u\partial_xv\big)dx+\int_{\Gamma}u^4(a-bu)dx.
	\end{align}
	We note that
\begin{align}\lb{ibp1}
	\int_{\Gamma}  u^3u_{xx}dx&= -\int_{\Gamma} (u^3)_xu_{x}dx+\sum_{\theta\in \cV}\sum_{e\sim \theta}u_e^3(\theta)\partial_{\nu}u_e(\theta)\\
	& = -\int_{\Gamma} (u^3)_xu_{x}dx+\sum_{\theta\in \cV}u^{3}(\theta)\sum_{e\sim \theta}\partial_{\nu}u_e(\theta)=-\int_{\Gamma} (u^3)_xu_{x}dx=-3\int_{\Gamma} (uu_{x})^2dx,
\end{align}
where we used the fact that $u$ satisfies the Neumann-Kirchhoff vertex conditions. Similarly, one has 
\begin{align}
	\begin{split}
\lb{ibp2}
&\int_{\Gamma}  u^3\partial_x(uv_{x})dx= -\int_{\Gamma}(u^3)_xuv_{x}dx+\sum_{\theta\in \cV}\sum_{e\sim \theta}u^4_e(\theta)\partial_{\nu}v_e(\theta)=-\int_{\Gamma}(u^3)_xuv_{x}dx.\\
	\end{split}
\end{align}
Therefore, we have
\begin{equation}\lb{firsteqm2}
	\frac{1}{4} \frac{d}{dt}\int_{\Gamma} u^{4}dx=  -3 \int_{\Gamma}  (uu_{x})^2dx +3\chi  \int_{\Gamma}u^{3}u_x v_x dx +  \int_{\Gamma}u^4(a-bu)dx.
\end{equation}
We note that Young's inequality with exponents $2,2$ yields 
\begin{align}
\begin{split}
\chi  \int_{\Gamma}u^{3}u_x v_x dx&= \int_{\Gamma}(uu_x)(\chi u^2v_{x})dx\leq  \frac{\chi^2}{4}\int_{\Gamma}u^{4} |v_x|^2dx+\int_{\Gamma} (uu_x)^2dx\lb{young1}.
\end{split}
\end{align}
By H\"older's inequality gives
\begin{equation}\lb{holder1}
\int_\Gamma u^5 dx\geq \frac{1}{|\Gamma|^{1/5}}\left(\int_{\Gamma} u^4dx\right)^{5/4}. 
\end{equation}
Combining these inequalities with \eqref{firsteqm2} we obtain
  \begin{align*}
    \frac{1}{4}\frac{d}{dt}\int_{\Gamma}u^{4}dx&=-3 \int_{\Gamma}  (uu_{x})^2dx +3\chi  \int_{\Gamma}u^{3}u_x v_x dx +  \int_{\Gamma}u^4(a-bu)dx\\
 %   &= -\int_{\Gamma}  |u_x|^2dx+\chi\int_{\Gamma}u u_x  v_xdx +\int_{\Gamma}u^{2} (a- b u)dx\\
&\underset{\eqref{young1}}{\leq }       \frac{3\chi^2}{4}\int_{\Gamma}u^{4} |v_x|^2dx  +a \int_{\Gamma}u^{4}dx
-b\int_\Gamma  u^{5}dx\\
&{ \le    \frac{3\chi^2}{4}\|v\|_{\hatt C^1(\overline \Gamma)}^2 \int_{\Gamma}u^{4} dx  +a \int_{\Gamma}u^{4}dx
-\frac{b}{|\Gamma|^{\frac{1}{5}}}\Big(\int_\Gamma  u^{4}dx\Big)^{\frac{5}{4}}}\\
&{=\Big ( \frac{3\chi^2}{4}\|v\|_{\hatt C^1(\overline \Gamma)}^2  +a  -  \frac{b}{|\Gamma|^{\frac{1}{5}}} \big(\int_{\Gamma}u^{4}dx\big)^{\frac{1}{4}}\Big) \int_\Gamma  u^{4}dx.}
\end{align*}
{This together with \eqref{bounds-eq2} imply \eqref{bounds-eq3}. }
\end{proof}

\begin{lemma}
\label{bounds-lm4}
Assume the setting of Theorem \ref{global-stability-thm}. Then for arbitrary $\gamma\in(0,1/2)$ one has
\begin{equation}
\label{bounds-eq4}
\limsup_{t \to \infty}\| (I-\Delta)^{\gamma }u(t)\|_{\el{2}}\leq C{ \left(\frac{2a^2}{b^2}+\frac{a^2\chi}{b^2}+\frac{a^2\chi^2}{b^3}+\frac{a^3\chi^3}{b^4}+\frac{a^4\chi^4}{b^5} \right).}
\end{equation}
\end{lemma}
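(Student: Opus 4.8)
The plan is to treat the first equation in \eqref{parabolic-parabolic-eq} as a semilinear parabolic equation governed by the analytic semigroup generated by $\Delta-I$, and to run a variation-of-constants (Duhamel) argument in the fractional power space $\cX_2^\gamma=\dom\big((I-\Delta)^\gamma\big)$, feeding in the a priori bounds already secured in Lemmas \ref{bounds-lm1}--\ref{bounds-lm3}. First I would rewrite the $u$-equation, keeping the advection term in divergence form, as
\[
\partial_t u=(\Delta-I)u+\cN(u,v),\qquad \cN(u,v):=(1+a)u-bu^2-\chi\,\partial_x\big(u\,\partial_x v\big),
\]
so that the linear part $\Delta-I$ is invertible with $\spec(\Delta-I)\subset(-\infty,-1]$ and thus generates an exponentially decaying analytic semigroup. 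Fixing a base time $t_0>0$, Duhamel's formula gives
\[
(I-\Delta)^\gamma u(t)=(I-\Delta)^\gamma e^{(\Delta-I)(t-t_0)}u(t_0)+\int_{t_0}^{t}(I-\Delta)^\gamma e^{(\Delta-I)(t-s)}\cN\big(u(s),v(s)\big)\,ds,
\]
which is justified by the regularity of the solution recorded in Theorem \ref{Lp}(2).

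Next I would invoke the standard analytic-semigroup smoothing estimates (cf. \cite{Henry} and Theorem \ref{analytic-semigroup-thm2}): $\big\|(I-\Delta)^{\gamma}e^{(\Delta-I)r}\big\|_{\cB(\el2)}\le C\,r^{-\gamma}e^{-r}$ for the zeroth-order contributions of $\cN$, and, for the advection term, I would exploit the boundedness of $\partial_x(I-\Delta)^{-1/2}$ on $\el2$ (which follows from $\|\partial_x f\|_{\el2}^2=\langle(-\Delta)f,f\rangle\le\|(I-\Delta)^{1/2}f\|_{\el2}^2$) to factor $\partial_x=(I-\Delta)^{1/2}R$ with $R$ bounded, yielding $\big\|(I-\Delta)^{\gamma}e^{(\Delta-I)r}\partial_x g\big\|_{\el2}\le C\,r^{-(\gamma+1/2)}e^{-r}\|g\|_{\el2}$. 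The memory kernels $r^{-\gamma}e^{-r}$ and $r^{-(\gamma+1/2)}e^{-r}$ are integrable on $(0,\infty)$ precisely because $\gamma<1/2$ forces $\gamma+\tfrac12<1$; this is exactly where the hypothesis $\gamma\in(0,1/2)$ is used.

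It then remains to estimate $\cN$ in $\el2$ using the earlier lemmas. Lemmas \ref{bounds-lm2} and \ref{bounds-lm3} allow me to choose $t_0$ so large that $\|v(s)\|_{\hatt C^1(\overline\Gamma)}\le C a/b$ and $\|u(s)\|_{\el4}\le C(a/b+a^2\chi^2/b^3)$ hold for all $s\ge t_0$. Hölder's inequality on the compact graph gives $\|u\|_{\el2}\le|\Gamma|^{1/4}\|u\|_{\el4}$, while $\|u^2\|_{\el2}=\|u\|_{\el4}^2$ and $\|u\,\partial_xv\|_{\el2}\le\|v\|_{\hatt C^1(\overline\Gamma)}\|u\|_{\el2}$. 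Substituting these into the smoothing estimates, bounding the two convolution integrals by $\int_0^\infty r^{-\gamma}e^{-r}\,dr$ and $\int_0^\infty r^{-(\gamma+1/2)}e^{-r}\,dr$, and noting that the initial term $(I-\Delta)^\gamma e^{(\Delta-I)(t-t_0)}u(t_0)$ tends to $0$ as $t\to\infty$ for fixed $t_0$ (since $u(t_0)\in\el2$), I would pass to $\limsup_{t\to\infty}$. Inserting the explicit values of the $\limsup$ bounds for $\|u\|_{\el4}$ and $\|v\|_{\hatt C^1(\overline\Gamma)}$ and carrying out the (routine) bookkeeping then produces a bound of the advertised polynomial form \eqref{bounds-eq4}.

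The main obstacle is the advection term $\chi\,\partial_x(u\,\partial_x v)$: one must gain a full spatial derivative through the semigroup without destroying the integrability of the convolution kernel, which is exactly what pins the restriction to $\gamma<1/2$. Conceptually, one should also keep in mind that the linearization about $u\equiv0$ is unstable (its top eigenvalue is $a>0$), so the boundedness that closes the estimate is genuinely nonlinear and enters only through the a priori Lemmas \ref{bounds-lm1}--\ref{bounds-lm3} rather than from any decay of the linear flow.
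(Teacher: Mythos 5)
Your proof is correct and follows essentially the same route as the paper's: Duhamel's formula for $u$ based at a large time $t_0$, the smoothing estimates for $(I-\Delta)^{\gamma}e^{(\Delta-I)r}$ with the extra $r^{-1/2}$ loss on the divergence-form advection term, and the a priori bounds of Lemmas \ref{bounds-lm2} and \ref{bounds-lm3} to close the convolution integrals, with $\gamma<1/2$ guaranteeing kernel integrability. The only (immaterial) difference is that you obtain the gradient smoothing bound by factoring $\partial_x=(I-\Delta)^{1/2}R$ via the quadratic form identity, whereas the paper invokes its $L^p$--$L^q$ gradient estimate \eqref{tgradlplq}; both yield the same $r^{-(\gamma+1/2)}$ kernel.
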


\begin{proof}
By Duhamel's principle, the first equation in \eqref{parabolic-parabolic-eq} yields
\begin{align}
\begin{split}\lb{lem4.4e1}
u(t)=e^{(\Delta-I)(t-t_0)}u(t_0)&-\chi\int_{t_0}^{t}e^{(\Delta-I)(t-s)}\partial_x\left(u(s)\partial_x v(s)\right)ds\\
&+\int_{t_0}^{t}e^{(\Delta-I)(t-s)}u(s)(a-bu(s))ds.
\end{split}
\end{align}
Let us note the following auxiliary inequalities 
\begin{align}
	\begin{split}\lb{lem4.4e2}
	&\|(I-\Delta)^{\gamma} e^{(\Delta-I)\frac{t-s}{2}}  e^{(\Delta-I)\frac{t-s}{2}} \partial_x(u(s)\partial_x v(s))\|_{\el{2}}\\
	&\underset{\eqref{fracpow}}{\leq} C (t-s)^{-\gamma}e^{\frac{t-s}{4}}\|e^{(\Delta-I)\frac{t-s}{2}} \partial_x(u(s)\partial_x v(s))\|_{\el{2}}\\
	&\underset{\eqref{tgradlplq}}{\leq} C (t-s)^{-\gamma-1/2}e^{\frac{t-s}{4}}\|u(s)\partial_x v(s)\|_{\el{2}}\\
	&\leq C (t-s)^{-\gamma-1/2}e^{\frac{t-s}{2}}{ \sup\limits_{r\geq t_0}\|\partial_x v(r)\|_{\el{\infty}}}\|u(s)\|_{\el{2}},
	\end{split}
\end{align}
\begin{align}
	\begin{split}\lb{lem4.4e3}
		&\|(I-\Delta)^{\gamma } e^{(\Delta-I)(t-s)}u(s)(a-bu(s))\|_{\el{2}}\\
		&\underset{\eqref{fracpow}}{\leq} C (t-s)^{-\gamma}e^{\frac{t-s}{2}}\|au(s)-bu^2(s)\|_{\el{2}}\\
		&\leq C (t-s)^{-\gamma}e^{\frac{t-s}{2}}\left(a\|u(s)\|_{\el{2}}+b{\|u(s)\|_{\el{4}}^{2}}\right),
	\end{split}
\end{align}
and
\begin{equation}
\label{lem4.4e00}
{ \|u(s)\|_{L^2(\Gamma)}\le |\Gamma|^{\frac{1}{4}}\|u(s)\|_{L^4(\Gamma)}.}
\end{equation}
Combining \eqref{lem4.4e1}, \eqref{lem4.4e2}, \eqref{lem4.4e3}, { \eqref{lem4.4e00}} we obtain
\begin{align*}
\| (I-\Delta)^{\gamma}u(t)\|_{\el{2}}&\le Ce^{\frac{t-t_0}2}\|(I-\Delta)^{\gamma}u(t_0)\|_{\el{2}}\\
&\qquad { +} C \chi \int_{t_0}^{t}(t-s)^{-\gamma-1/2}e^{\frac{t-s}{2}}\sup\limits_{ r\geq t_0}\|\partial_x v(r)\|_{\el{\infty}}\|u(s)\|_{\el{2}}ds\\
& \qquad +C\int_{t_0}^{t} (t-s)^{-\gamma}e^{\frac{t-s}{2}}\left(a\|u(s)\|_{\el{2}}+b\|u(s)\|_{\el{4}}^{{ 2}}\right)ds\\
&{\le    Ce^{\frac{t-t_0}2}\|(I-\Delta)^{\gamma}u(t_0)\|_{\el{2}}}\\
&\qquad { +  C \chi  \sup\limits_{r\geq t_0}\|\partial_x v(r)\|_{\el{\infty}}
\sup\limits_{t\ge t_0}\|u(r)\|_{\el{4}} \int_{t_0}^{t}(t-s)^{-\gamma-1/2}e^{\frac{t-s}{2}}ds}\\
& \qquad{+C \left(a\sup\limits_{r\ge t_0}\|u(r)\|_{\el{4}}+b\sup\limits_{r\ge t_0}\|u(r)\|_{\el{4}}^{ 2}\right) \int_{t_0}^{t} (t-s)^{-\gamma}e^{\frac{t-s}{2}}ds}
\end{align*}
 {This together with \eqref{bounds-eq2}  and \eqref{bounds-eq3} implies   \eqref{bounds-eq4} (choosing $t_0$ sufficiently large).}
\end{proof}

\begin{lemma}
\label{bounds-lm5}
Assume the setting of Theorem \ref{global-stability-thm}. Then one has
\begin{equation}
\label{bounds-eq6}
\limsup_{t \to \infty} \|\Delta v(t)\|_{\el{\infty}}\leq \kappa(a,b,\chi),
\end{equation}
where $\kappa(a,b,\chi)$ is as in \eqref{kap}.
\end{lemma}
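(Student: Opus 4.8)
The plan is to run the Duhamel formula for the second equation of \eqref{parabolic-parabolic-eq} and to upgrade the fractional bound on $u$ from Lemma \ref{bounds-lm4} into an $\el{\infty}$ bound on $\Delta v$ by exploiting the smoothing of the analytic semigroup generated by $\Delta-I$. Since $\Delta v = v-(I-\Delta)v$, I would first record
\[
\|\Delta v(t)\|_{\el{\infty}}\le \|v(t)\|_{\el{\infty}}+\|(I-\Delta)v(t)\|_{\el{\infty}},
\]
and dispose of the first summand immediately through Lemma \ref{bounds-lm2}, which gives $\limsup_{t\to\infty}\|v(t)\|_{\el{\infty}}\le \limsup_{t\to\infty}\|v(t)\|_{\hatt C^1(\overline\Gamma)}\le C a/b$. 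The real work is the estimate of $\|(I-\Delta)v(t)\|_{\el{\infty}}$.

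To this end I would fix exponents $q\ge 2$, $\gamma\in(\tfrac14,\tfrac12)$ and $\beta$ with $\tfrac{1}{2q}<\beta<\gamma-\tfrac14+\tfrac{1}{2q}$; the left inequality secures the embedding $\cX_q^{\beta}\hookrightarrow \el{\infty}$ of Theorem \ref{analytic-semigroup-thm2}, which I apply to $w:=(I-\Delta)v$ to obtain $\|(I-\Delta)v(t)\|_{\el{\infty}}\le C\|(I-\Delta)^{1+\beta}v(t)\|_{\elq}$. Writing the second equation as $\tau v_t=(\Delta-I)v+u$, the Duhamel principle yields
\[
v(t)=e^{(\Delta-I)\frac{t-t_0}{\tau}}v(t_0)+\frac1\tau\int_{t_0}^{t}e^{(\Delta-I)\frac{t-s}{\tau}}u(s)\,ds .
\]
Applying $(I-\Delta)^{1+\beta}$ and taking $\elq$ norms, the homogeneous term decays exponentially in $t-t_0$ because $\spec(\Delta-I)\subset(-\infty,-1]$, and hence does not affect the $\limsup$.

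For the integral term I would factor $(I-\Delta)^{1+\beta}=(I-\Delta)^{1+\beta-\gamma}(I-\Delta)^{\gamma}$, split the semigroup as $e^{(\Delta-I)r}=e^{(\Delta-I)r/2}e^{(\Delta-I)r/2}$ exactly as in \eqref{lem4.4e2}, and use the $\el{2}\to\elq$ smoothing estimate of Theorem \ref{analytic-semigroup-thm2} in the form
\[
\big\|(I-\Delta)^{1+\beta-\gamma}e^{(\Delta-I)r}g\big\|_{\elq}\le C\,r^{-(1+\beta-\gamma)-\frac12(\frac12-\frac1q)}e^{-\sigma r}\|g\|_{\el{2}},\qquad \sigma>0,
\]
with $g=(I-\Delta)^{\gamma}u(s)$ and $r=(t-s)/\tau$. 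The upper bound on $\beta$ makes the time exponent $\alpha:=(1+\beta-\gamma)+\tfrac12(\tfrac12-\tfrac1q)$ strictly less than $1$, so the kernel is integrable and $\int_{t_0}^{t}\big(\tfrac{t-s}\tau\big)^{-\alpha}e^{-\sigma\frac{t-s}\tau}\,ds$ stays bounded uniformly in $t,t_0$; after the substitution $r=(t-s)/\tau$ the prefactor $\tau^{-1}$ is absorbed, so the resulting constant depends only on $\Gamma$. Factoring out $\sup_{s\ge t_0}\|(I-\Delta)^{\gamma}u(s)\|_{\el{2}}$ and letting $t\to\infty$ then gives $\limsup_{t\to\infty}\|(I-\Delta)v(t)\|_{\el{\infty}}\le C\limsup_{t\to\infty}\|(I-\Delta)^{\gamma}u(t)\|_{\el{2}}$, and Lemma \ref{bounds-lm4} bounds the right-hand side by a constant multiple of the expression in \eqref{bounds-eq4}. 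Combining this with the $Ca/b$ contribution of $\|v\|_{\el{\infty}}$ and absorbing every constant into the single $C=C(\Gamma)$ of \eqref{kap} produces \eqref{bounds-eq6}.

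The main obstacle is the simultaneous calibration of the three exponents: I need $\beta>\tfrac1{2q}$ for the embedding $\cX_q^{\beta}\hookrightarrow\el{\infty}$ and $\beta<\gamma-\tfrac14+\tfrac1{2q}$ for integrability of the semigroup kernel, while $\gamma$ is pinned to $(\tfrac14,\tfrac12)$ — the lower endpoint forced by nonemptiness of this window and the upper endpoint forced by Lemma \ref{bounds-lm4}. A secondary technical point is that here one needs the $\el{2}\to\elq$ smoothing estimate rather than the $\el{2}\to\el{2}$ version used in Lemma \ref{bounds-lm4}; this gain in integrability, carried by the extra factor $r^{-\frac12(\frac12-\frac1q)}$, is precisely what the fractional-power estimates of Appendix \ref{functionalspaces} supply.
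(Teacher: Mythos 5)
Your argument is essentially the paper's proof: Duhamel for the second equation, application of $(I-\Delta)^{1+\beta}$ with the splitting $(I-\Delta)^{1+\beta}=(I-\Delta)^{1+\beta-\gamma}(I-\Delta)^{\gamma}$, the embedding of a fractional power space into $\el{\infty}$, an integrable time singularity, and then Lemmas \ref{bounds-lm2} and \ref{bounds-lm4}. The only cosmetic difference is that you allow a general exponent $q$, which forces you to invoke an $L^2\to L^q$ smoothing estimate not literally stated in Appendix \ref{functionalspaces} (though derivable from \eqref{fracpow} and Theorem \ref{analytic-semigroup-thm2}); at $q=2$ your calibration $\gamma\in(\tfrac14,\tfrac12)$, $\beta\in(\tfrac14,\gamma)$ reduces exactly to the paper's choice and only \eqref{fracpow} is needed.
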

\begin{proof}
The second equation in \eqref{parabolic-parabolic-eq} together with Duhamel's principle yields
\begin{equation}
v(t)=e^{(\Delta-I)\frac{t-t_0}{\tau}}v(t_0)+\frac{1}{\tau}\int_{t_0}^{t}e^{(\Delta-I)\frac{t-s}{\tau}}u(s)ds,
\end{equation}
hence,
\begin{equation}\lb{duhamel}
	(I-\Delta)v(t)=e^{(\Delta-I)\frac{t-t_0}{\tau}}(I-\Delta)v(t_0)+\frac{1}{\tau}\int_{t_0}^{t}(I-\Delta)e^{(\Delta-I)\frac{t-s}{\tau}}u(s)ds.
\end{equation}
Let us estimate $\el{\infty}$ norm of each term above. To that end, we first note the following embedding
\begin{equation}\lb{emb2-0}
{
\cX^{\alpha }_2\hookrightarrow \hatt C^{\nu }(\overline{\Gamma}), \,\, \frac{1}{4}<\alpha<1,\,\,   0<\nu  <2\alpha-\frac{1}{2}}.
\end{equation}
{ Choose $\frac{1}{4}<\alpha<\frac{1}{2}<\gamma<1$.}
One has
\begin{align}
	&\int_{t_0}^ t \|(I-\Delta) e^{(\Delta-I)\frac{t-s}{\tau}}u(s)\|_{\el{\infty}} ds\\
%	&\hspace{1.9cm}{\color{green}=\int_{t_0}^ t \|(I-\Delta)^{1/2}(I-\Delta)^{1/2} e^{(\Delta-I)\frac{t-s}{\tau}}u(s)\|_{\el{\infty}} ds}\\
&\hspace{1.9cm}\le  \int_{t_0}^ t \|(I-\Delta) e^{(\Delta-I)\frac{t-s}{\tau}}u(s)\|_{C^\nu(\overline{\Gamma})} ds\\
&\hspace{1.9cm}\le  C \int_{t_0}^ t \|(I-\Delta) e^{(\Delta-I)\frac{t-s}{\tau}}u(s)\|_{ \cX^{\alpha }_2 } ds\\
&\hspace{1.9cm}\le  C \int_{t_0}^ t \|(I-\Delta)^{1+\alpha} e^{(\Delta-I)\frac{t-s}{\tau}}u(s)\|_{ L^2(\Gamma) } ds\\
&\hspace{1.9cm}\le  C \int_{t_0}^ t \|(I-\Delta)^{1+\alpha-\gamma} e^{(\Delta-I)\frac{t-s}{\tau}}(I-\Delta)^{\gamma}u(s)\|_{ L^2(\Gamma) } ds\\
&\hspace{1.9cm}\le  C \int_{t_0}^ t (t-s)^{-(1+\alpha-\gamma)} e^{-\frac{t-s}{2\tau}}\|(I-\Delta)^{\gamma}u(s)\|_{ L^2(\Gamma) } ds
\end{align}
Combining this with {
\begin{align*}
 \| (I-\Delta)e^{(\Delta-I) \frac{t-t_0}{\tau}}v(t_0)\|_{\el{\infty}} &\le \| (I-\Delta)e^{(\Delta-I) \frac{t-t_0}{\tau}}v(t_0)\|_{C^\nu(\overline{\Gamma})} 
\\
&\le \| (I-\Delta)e^{(\Delta-I) \frac{t-t_0}{\tau}}v(t_0)\|_{\cX^{\alpha }_2}\\ 
&\le C e^{\frac{t_0-t}{2\tau}}\| (I-\Delta)^{1+\alpha } v(t_0)\|_{L^2(\Gamma)}\\
\end{align*}
}
we obtain
{
\begin{align}
	\|(I-\Delta) v(t)\|_{\el{\infty}} &\le C e^{\frac{t_0-t}{2\tau}}\| (I-\Delta)^{1+\alpha } v(t_0)\|_{L^2(\Gamma)}\\
	&\qquad+  C \int_{t_0}^ t \|(t-s)^{-(1+\alpha-\gamma)} e^{-\frac{t-s}{2\tau}}\|(I-\Delta)^{\gamma}u(s)\|_{ L^2(\Gamma) } ds.
	%&\le C e^{-\lambda_1\frac{t-t_0}{\tau}}\|A v(t_0)\|_\infty 
	%+C \int_{t_0}^t \Big(1+(t-s)^{-\frac{7}{8}}\Big) e^{-\lambda_1(t-s)} \|A^{\frac{3}{8}}u(s)\|_{L^2}ds.
\end{align}}
This inequality together with \eqref{bounds-eq2} and \eqref{bounds-eq4} (which is applicable since { $\frac{1}{4}<\alpha<\frac{1}{2}<\gamma<1$}) yield \eqref{bounds-eq6}.
\end{proof}

We now prove Theorem \ref{global-stability-thm}.

\begin{proof}[Proof of Theorem \ref{global-stability-thm}]
	Let us observe that for arbitrary $\varepsilon>0$ Lemma \ref{bounds-lm5} yields a $\chi-$independent constant such that for arbitrary $x\in\overline{\Gamma}$, $t>0$ and $\chi>0$ one has
	\begin{equation}\lb{deltavbound}
	-(\kappa(a,b,\chi)+\varepsilon)\leq v_{xx}(t,x)\leq \kappa(a,b,\chi)+\varepsilon.
	\end{equation}
	Employing \eqref{deltavbound} we obtain the following inequalities
	\begin{align*}
		u_t&=u_{xx}-\chi(u_x v)_x+u(a-bu)\\
		&=u_{xx}-\chi u_x v_x-\chi u_{xx} v+u(a-bu)\\
		&\ge u_{xx}-\chi u_x v_x- (\kappa(a,b,\chi)+\varepsilon) u+ u(a-bu),
	\end{align*}
	and
	\begin{align*}
		u_t&=u_{xx}-\chi(u_x v)_x+u(a-bu)\\
	&=u_{xx}-\chi u_x v_x-\chi u_{xx} v+u(a-bu)\\
		&\le  u_{xx}-\chi  u_x v_x+(\kappa(a,b,\chi)+\varepsilon) u+ u(a-bu).
	\end{align*}
	Therefore the partial differential equation 
	\begin{equation}
	\varphi_t=\varphi_{xx}-\chi \varphi_x v_x + (\kappa(a,b,\chi)+\varepsilon)  \varphi+ \varphi(a-b\varphi), 
	\end{equation}
exhibits the subsolution $u$ {and} a constant solution
\begin{align}
&\overline{u}_\varepsilon :=\frac{a+\chi(\kappa(a,b,\chi)+\varepsilon) }{b},
%&\underline{u}_\varepsilon :=\frac{a-C\chi(1+\chi^2+\chi^3+\varepsilon)}{b},
\end{align}
and, similarly $u$ is a supersolution of 
\begin{equation}
	\varphi_t=\varphi_{xx}-\chi \varphi_x v_x -  \chi(\kappa(a,b,\chi)+\varepsilon)  \varphi+ \varphi(a-b\varphi), 
\end{equation}
while a constant solution is given by
\begin{align}
	&\underline{u}_\varepsilon :=\frac{a-\chi(\kappa(a,b,\chi)+\varepsilon) }{b}.
	%&\underline{u}_\varepsilon :=\frac{a-C\chi(1+\chi^2+\chi^3+\varepsilon)}{b},
\end{align}
Therefore, one obtains
\begin{equation}\lb{solbound}
	\underline{u}_\varepsilon \le u(t,x)\le\overline{u}_\varepsilon,\ x\in\Gamma, t>0.  
\end{equation}

Next, let us introduce
	$$
	U(t,x):=u(t,x)-\frac{a}{b},\quad V(t,x):=v(t,x)-\frac{a}{b}.
	$$
	Then
	we have
	\begin{equation}\lb{Uequation}
U_t=U_{xx}-\chi (u_x V)_x-b u U,
	\end{equation}
	and
	\begin{equation}\lb{Vequation}
		\tau V_t=V_{xx}-V+U.
	\end{equation}
Since $U$ satisfies the Neumann--Kirchhoff vertex conditions, multiplying \eqref{Uequation} and integrating by parts as in the proof of Lemma \ref{bounds-lm1} we obtain
	\begin{align}
		\begin{split}\lb{vspteq1}
		\frac{1}{2}\frac{d}{dt}\int_\Gamma U^2dx&=-\int_\Gamma |U_x|^2dx+\chi\int_\Gamma u  U_xV_xdx-b\int_\Gamma u U^2dx\\
		&\le \frac{\chi^2}{4}\int_\Gamma u^2 |V_x|^2dx -b \int_\Gamma u U^2dx\\
		&\le \frac{\chi^2}{4}\bar u_\varepsilon ^2 \int_\Gamma |V_x|^2dx-b\underline u_\varepsilon \int_\Gamma U^2dx
		\end{split}
	\end{align}
we used \eqref{solbound} and  
\begin{equation}
\chi\int_\Gamma u  U_xV_xdx=\int_\Gamma \chi V_xu  U_x dx\leq \int_\Gamma |U_x|^2dx+\frac{\chi^2}{4}\int_\Gamma u^2 |V_x|^2dx
\end{equation}
which, in turn, follows from Young's inequality. Similarly, multiplying \eqref{Vequation} by $V$ and integrating by parts yields
	\begin{align}
		\begin{split}\lb{vspteq2}
			\frac{\tau}{2}\frac{d}{dt}\int_\Gamma V^2dx&=-\int_\Gamma |V_x|^2-\int_\Gamma V^2dx+\int_\Gamma U Vdx\\
		&\le -\int_\Gamma |V_x|^2dx-\frac{1}{2}\int_\Gamma V^2dx+\frac{1}{2}\int_\Gamma U^2dx,
		\end{split}
	\end{align}
where in the last step we used Young's inequality. Hence, combining \eqref{vspteq1}, \eqref{vspteq2} we arrive at
	\begin{align}
		\begin{split}\lb{vsp3}
			&\frac{1}{2}\frac{d}{dt}\int_\Gamma U^2dx+\frac{\tau}{2}\frac{\chi^2}{4}\overline{u}_\varepsilon^2 \int_\Gamma V^2dx\\
		&\le  -\frac{1}{2}\frac{\chi^2}{4}\overline{u}_\varepsilon^2 \int_\Gamma V^2dx -\Big(b\underline u_\varepsilon -\frac{\chi^2}{4}\overline{u}_\varepsilon^2 \Big)\int_\Gamma U^2dx. 
		\end{split}
	\end{align}
	Provided \eqref{kap} we have 
\begin{equation}
 b \underline u_\epsilon-\frac{\chi^2}{4}\overline{u}_\epsilon^2>0,
\end{equation}
which together with \eqref{vsp3} yield
\begin{equation}\lb{vsp4}
	\lim_{t\to\infty}\int_\Gamma  (U^2+V^2)dx=0.	
\end{equation}	

Let us now switch to the prove of \eqref{asympstab1}. Assume that 
	$$
	\limsup_{t\to\infty}\left(\left\|u-\frac{a}{b}\right\|_{\el{\infty}}+\left\|v-\frac{a}{v}\right\|_{\el{\infty}}\right)>0,
	$$
	Then for some $\epsilon_0>0$, $t_n\to\infty$ and $x_n\in\Gamma$, $n\in\bbN$ one has
	$$
	\left|u(t_n,x_n)-\frac{a}{b}\right|+\left|v(t_n,x_n)-\frac{a}{b}\right|\ge \epsilon_0. 
	$$
Combining this inequality with the uniform continuity of $u$ and $v$ yields  a $\delta_0>0$ such that
	$$
	\left|u(t_n,x)-\frac{a}{b}\right|+\left|v(t_n,x)-\frac{a}{b}\right|\ge \frac{\epsilon_0}{2},\ n\in\bbN, x\in\Gamma, |x-x_n|\le\delta_0.
	$$
	This implies that 
	$$
	\liminf_{n\to\infty}\int_\Gamma  (U^2(t_n,x)+V^2(t_n,x))>0,
	$$
	which contradicts \eqref{vsp4}.
\end{proof}

Finally, we prove Theorem \ref{4.4}.

\begin{proof}[Proof of Theorem \ref{4.4}]
	
	Let us define
	\begin{equation}
		\overline{u}= \limsup_{t \to \infty} \left( \sup_{x \in \Gamma} u(x,t)\right)  \quad \text{and} \quad \underline{u} =\liminf_{t \to  \infty} \left( \inf_{x \in \Gamma} u(x,t)\right),
	\end{equation}
	then for $\varepsilon>0$ there exists $t_{\varepsilon}>0$ such that
	\begin{equation}
		\underline{u} - \varepsilon \leq  \inf_{x \in \Gamma} u(x,t) \leq u(x,t) \leq \sup_{x \in \Gamma} u(x,t) \leq \overline{u}+ \varepsilon,\quad \ t\geq t_{\varepsilon}.
	\end{equation}

	Let us note that $v_{xx}(x,t)-v(x,t)+u(x,t)=0$ together with the comparison principle for elliptic equations yield 
	\begin{equation}\lb{vpeq1}
		\underline {u}_\epsilon:=\underline{u} - \epsilon \leq  v(x,t)  \leq \overline{u}+ \epsilon:=\overline u_\epsilon,\quad  x \in \Gamma, t\geq t_{\epsilon}.
	\end{equation}
	Hence, using the first equation in \eqref{parabolic-parabolic-eq} we obtain
	\begin{equation}\lb{parab1}
		u_t \leq u_{xx}-\chi u_x v_x -\chi u(\underline{u}^\epsilon-u)+u(a-bu). 
	\end{equation}  
	Consider the initial value problem for the following logistic equation 
	\begin{equation}\lb{parab2}
		\begin{cases}
			\overline{w}_t&=\chi\overline{w}(\overline{w}- \underline{u}^\epsilon) +  \overline{w}(a-b\overline{w})\  \\
			&=-(b-\chi)\overline{w}^2 +(a -\chi \underline{u}^\epsilon)\overline{w},\  t \geq t_{\epsilon},\\
			\overline{w}(t_\epsilon)&=  \underset{x \in \Gamma} \max\  u(x, t_{\varepsilon}),
		\end{cases}
	\end{equation}
	Combining \eqref{parab1}, \eqref{parab2} and the comparison principle for parabolic equations one obtains
	\begin{equation}\lb{upbnd}
		u(x,t )\leq \overline{w}(t),  x \in \Gamma, t \geq t_\varepsilon. 
	\end{equation}
	Moreover,  
	the logistic equation \eqref{parab2} yields
	\begin{equation}
		\overline{w}(t) \rightarrow \frac{(a- \chi\underline{u}^\epsilon)_+}{b-\chi} \quad \text{as} \quad t \rightarrow \infty. 
	\end{equation}
	Then employing \eqref{upbnd} one infers
	\begin{equation}
		\overline{u}\leq  \frac{{ (a- \chi(\underline{u}-\varepsilon))_+}}{b-\chi}, \quad \forall\, \varepsilon>0. 
	\end{equation}
	Hence, one has
	\begin{equation}\lb{centin2}
		\overline{u}\leq  \frac{{ (a- \chi\underline{u})_+}}{b-\chi}
	\end{equation}
	By a similar argument, using 
	\begin{equation}\lb{parab1new}
		u_t \geq  u_{xx}-\chi u_x v_x -\chi u(\overline{u}^\epsilon-u)+u(a-bu). 
	\end{equation}  
	Consider the initial value problem for the following logistic equation 
	\begin{equation}\lb{parab2new}
		\begin{cases}
			\underline{w}_t&=\chi\underline{w}(\underline{w}- \overline{u}^\varepsilon) +  \underline{w}(a-b\underline{w})\  \\
			&=-(b-\chi)\underline{w}^2 +(a -\chi \overline{u}^\varepsilon)\underline{w},\  t \geq t_{\epsilon},\\
			\overline{w}(t_\epsilon)&=  \underset{x \in \Gamma} \min\  u(x, t_{\varepsilon}),
		\end{cases}
	\end{equation}
	one obtains
	\begin{equation}\lb{centin}
		\underline{u}\geq  \frac{a- \chi\overline{u}}{b-\chi}.
	\end{equation}
	
{Note that $a-\chi\underline u>0$, for otherwise, by  \eqref{centin2}, we have
$\overline u=0$ and then $\underline u=0$. But by \eqref{centin}, we have $\underline u\ge \frac{a}{b-\chi}>0$, which yields a contradiction. Hence, one has $a-\chi\underline u>0$.} Then, 
 employing \eqref{centin}, \eqref{centin2} we obtain
	\begin{align}
		\begin{split}\lb{inaga}
			a-\chi\overline{u}+ (b-\chi)\overline{u} &\leq (b-\chi)\underline{u}+ (a-\chi\underline{u}),\\
			(b-2\chi)\overline{u} &\leq (b-2\chi)\underline{u}. 
		\end{split}
	\end{align}
	Recalling  $b > 2\chi$ and $ \overline{u} \geq \underline{u}$ we obtain $\overline{u} =\underline{u}$. This identity together with { \eqref{centin2} (resp.  \eqref{centin})   gives $\overline u\le \frac{a}{b}$ (resp. $\overline u\ge \frac{a}{b}$)}.
Hence
 $\overline{u}=\underline{u}= \frac{a}{b}$, which in turn shows
	\begin{equation}
		\lim_{t \rightarrow \infty}\left\|u(t, \cdot;u_0)-\frac{a}{b}\right\|_{L^{\infty}(\Gamma)} =0. 
	\end{equation}
	Similar inequality for $v$ follows from \eqref{vpeq1}. 
\end{proof}

\section{Numerical illustrations}\lb{numerics}
In this section we provide numerical computation of bifurcation points for the following graphs:
\begin{itemize}
		\item  the dumbbell graph, see Figure \ref{dumbbell}, whose eigenvalues $\lambda=k^2$ can be determined from the secular equation
	\begin{equation}
		\sin\frac {\ell_1 k}2\sin\frac {\ell_3 k}2\left[\left(4\sin\frac {\ell_1 k}2\sin\frac {\ell_3 k}2-\cos\frac {\ell_1 k}2\cos\frac {\ell_3 k}2\right)\sin\frac {\ell_2 k}2-2\cos\frac {\ell_2 k}2\sin\frac{(\ell_1+\ell_3)k}{2}\right]=0,
	\end{equation} 
	\item the tadpole graph, see Figure \ref{Tadpole}, whose eigenvalues $\lambda=k^2$ can be determined from the secular equation
	\begin{equation}
		2\cos(\ell_1 k)\cos(\ell_2k)-\sin(\ell_1 k)\sin(\ell_2k)=0,
	\end{equation} 
	\item the figure $8$ graph, see Figure \ref{fig8}, whose eigenvalues $\lambda=k^2$ can be determined from the secular equation
	\begin{equation}
		\sin\left(\frac {\ell_1 k}2\right)\sin\left(\frac {\ell_2 k}2\right)\sin\left(\frac {(\ell_1+\ell_2) k}2\right)=0,
	\end{equation} 
	\item  the 3-star graph, see Figure \ref{3star}, whose eigenvalues $\lambda=k^2$ can be determined from the secular equation
	\begin{align}
	\sin(\ell_1 k) \cos(\ell_2 k)\cos(\ell_3 k)+\cos(\ell_1 k) &\sin(\ell_2 k)\sin(\ell_3 k)+\\
	&+	\sin(\ell_1 k) \sin(\ell_2 k)\cos(\ell_3 k)=0.
	\end{align} 
\end{itemize}

\begin{figure}[h]
	\begin{subfigure}{.5\textwidth}
		\centering
		\includegraphics[width=1.3\linewidth]{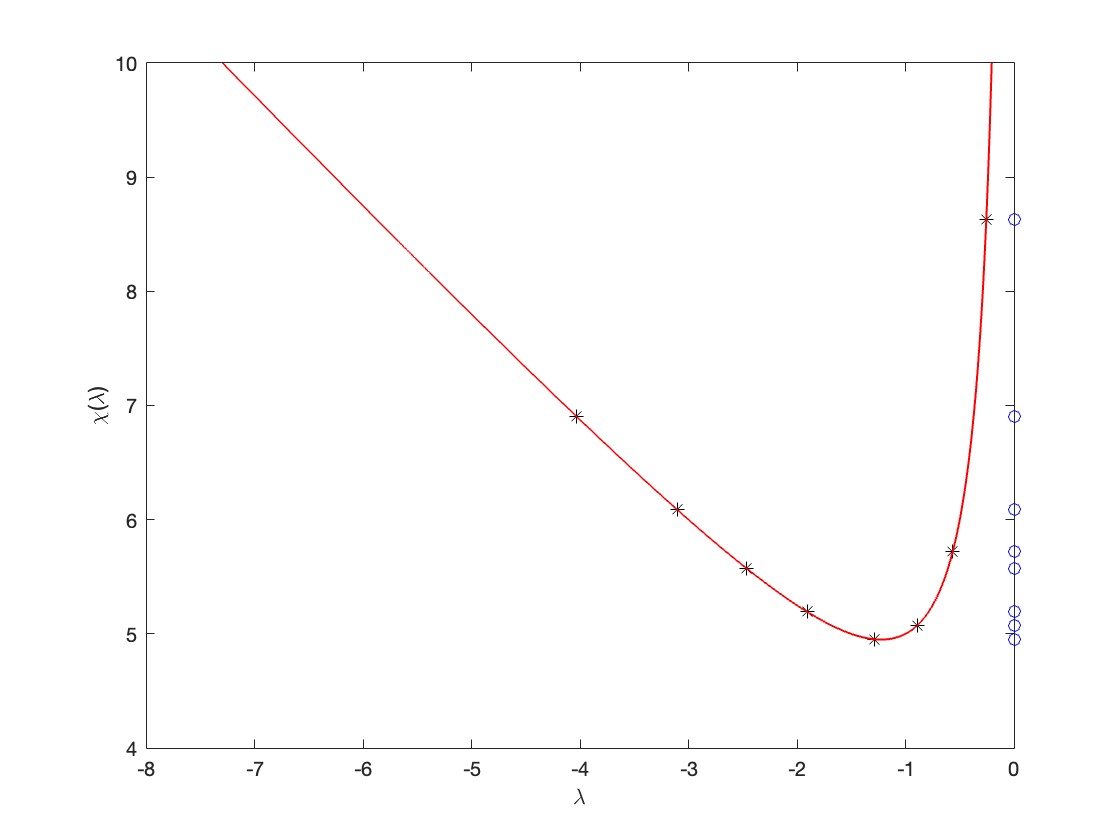}
	\end{subfigure}%
	\begin{subfigure}{.5\textwidth}
		\centering
		\includegraphics[scale=0.2]{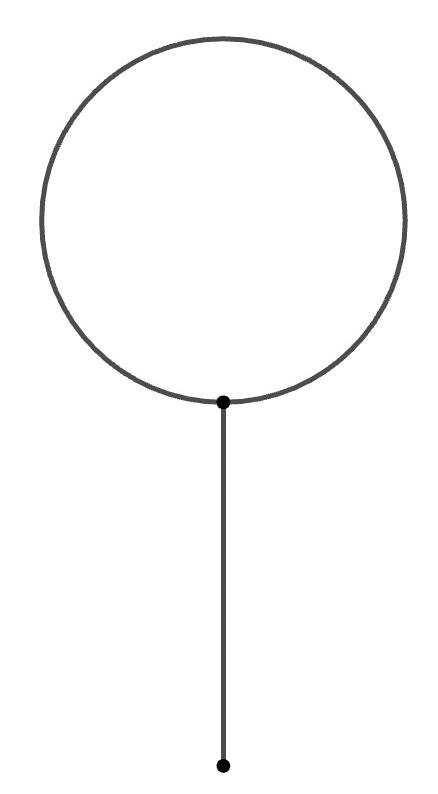}
	\end{subfigure}
	\caption{The red curve is the graph of $\chi=\chi(\lambda)$ with $a=b=1.5$; $*$ indicates the values of $\chi$ at eigenvalues of the Kirchhoff Laplacian on a {\it tadpole graph} with edge lengths $10, 5$; {\blue $\circ$} indicate bifurcation points. The first bifurcation point $\chi^*\approx4.95279$ corresponds to the $5-$th eigenvalue.}
	\label{Tadpole}
\end{figure}
\begin{figure}
	\begin{subfigure}{.5\textwidth}
		\centering
		\includegraphics[width=1.3\linewidth]{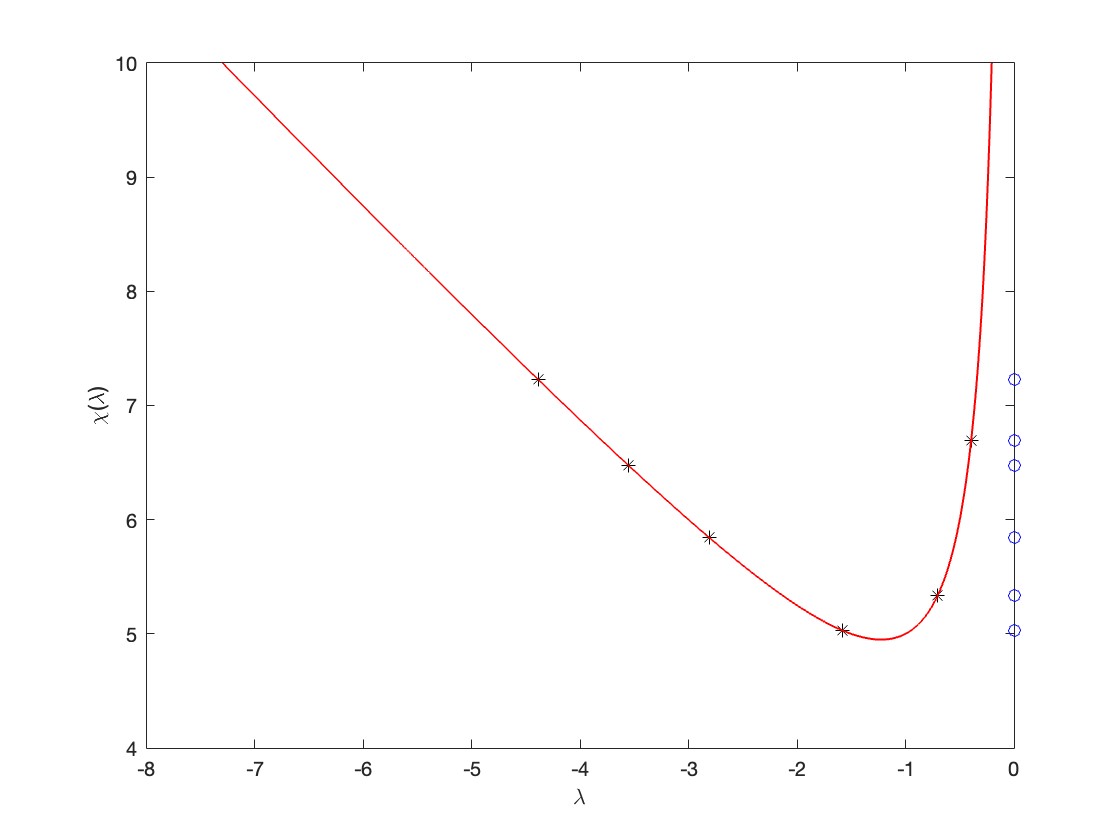}
	\end{subfigure}%
	\begin{subfigure}{.5\textwidth}
		\centering
		\includegraphics[scale=0.15]{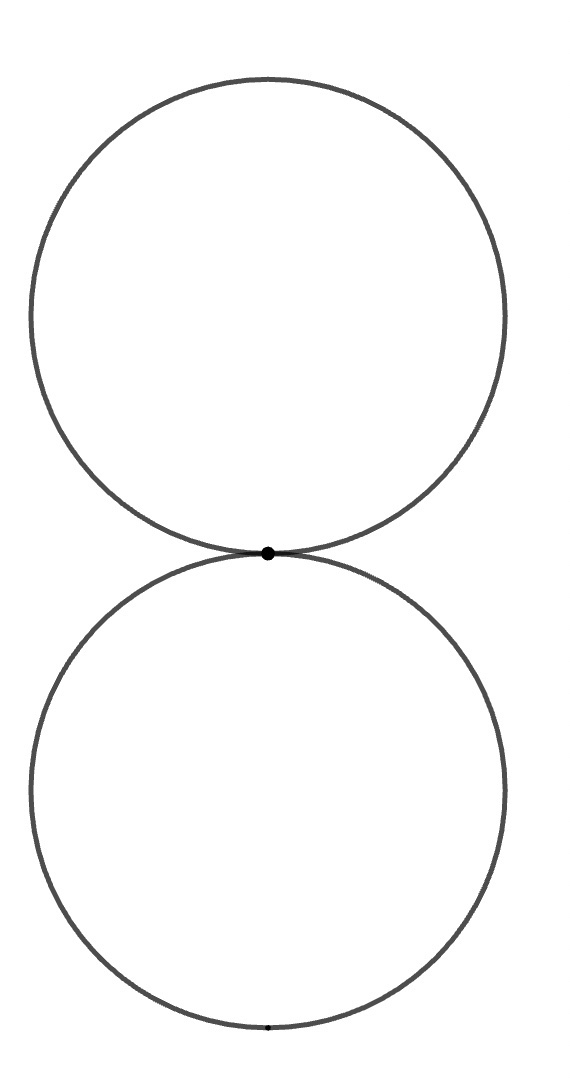}
	\end{subfigure}
	\caption{The red curve is the graph of $\chi=\chi(\lambda)$ with $a=b=1.5$; $*$ indicates the values of $\chi$ at eigenvalues of the Kirchhoff Laplacian on a {\it figure $8$ graph }with edge lengths $10, 5$; {\blue $\circ$} indicate bifurcation points. The first bifurcation point $\chi^*\approx5.01774$ corresponds to the $4-$th eigenvalue.}
	\label{fig8}
\end{figure}

\begin{figure}
	\begin{subfigure}{.5\textwidth}
		\centering
		\includegraphics[width=1.3\linewidth]{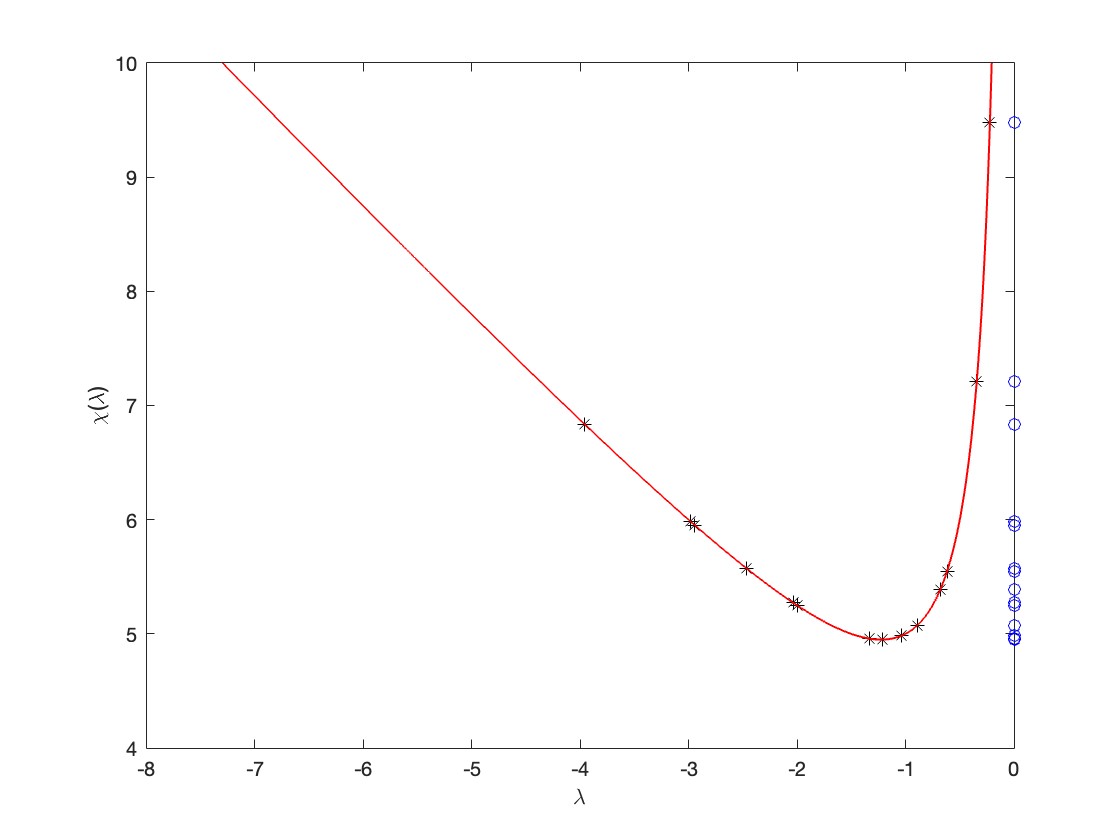}
	\end{subfigure}%
	\begin{subfigure}{.5\textwidth}
		\centering
		\includegraphics[scale=0.14]{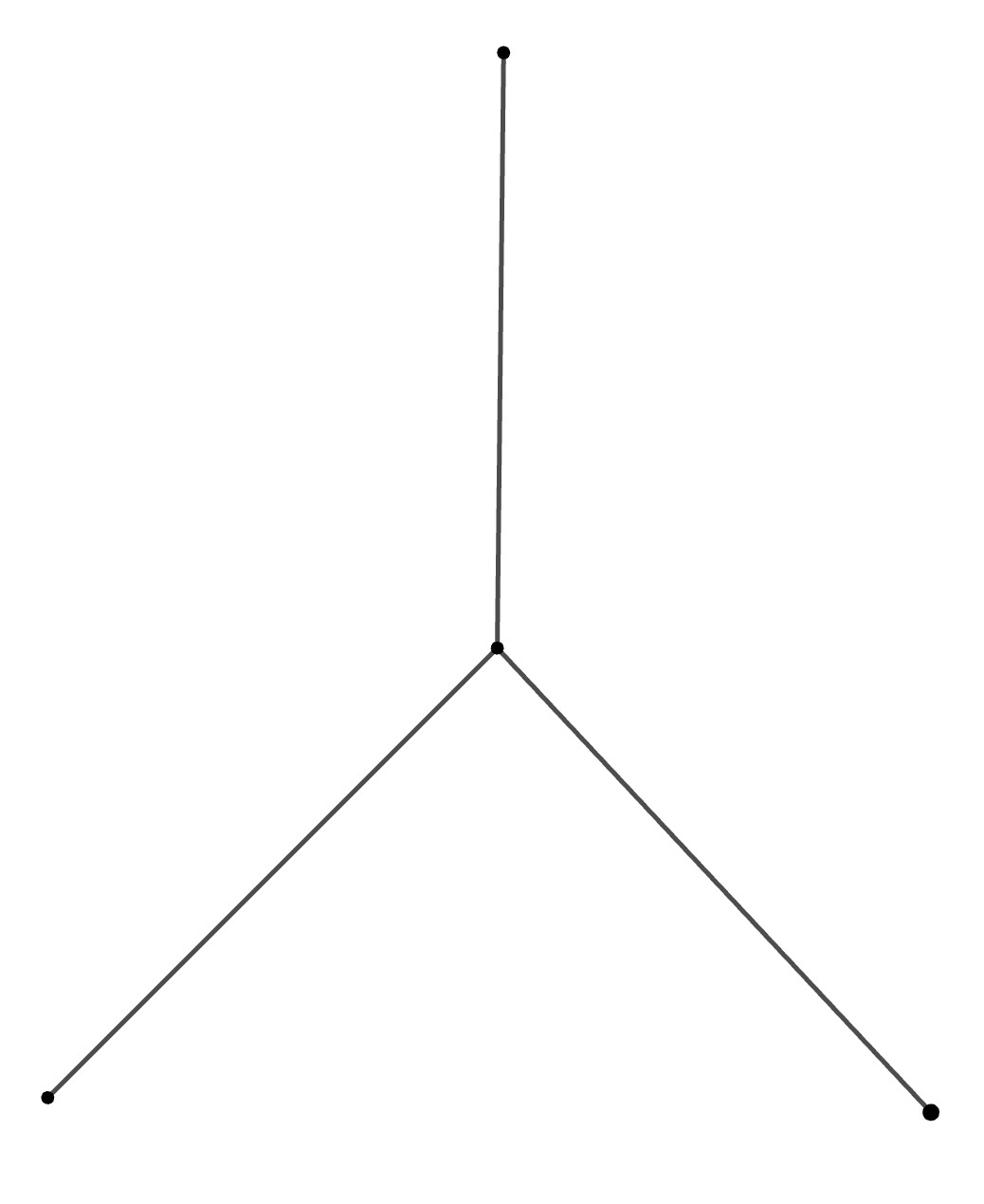}
	\end{subfigure}
	\caption{The red curve is the graph of $\chi=\chi(\lambda)$ with $a=b=1.5$; $*$ indicates the values of $\chi$ at eigenvalues of the Kirchhoff Laplacian on a {\it 3 star graph }with edge lengths $10, 5, 1$; {\blue $\circ$} indicate bifurcation points. The first bifurcation point $\chi^*\approx4.94967$ corresponds to the $8-$th eigenvalue.}
	\label{3star}
\end{figure}

\appendix
\section{Auxiliary results}\lb{functionalspaces}

In this section we record several facts about fractional power spaces $\cX^{\alpha}_p$, $\alpha\in(0,1)$, $1\leq p< \infty$ generated by the Neumann--Kirchhoff Laplacian on compact metric graphs { on $\Gamma=(\mathcal{V},\mathcal{E})$}. Let
\begin{equation}
\label{L-p-eq}
{L^p(\Gamma):=\bigoplus_{e\in\cE}L^p(e)\quad {\rm and}\quad 
\  \hatt{W}^{s,p}(\Gamma):=\bigoplus_{e\in\cE} W^{s,p}(e).}
\end{equation}  
By definition, see, e.g., \cite[Section 1.3]{Henry}, $\cX^{\alpha}_p:=\dom((I_{\elpee}-\Delta)^{\alpha})$ is equipped with the graph norm of $(I_{\elpee}-\Delta)^{\alpha}$. Throughout this paper we used bounded embeddings of $\cX^{\alpha}_p$ into various function spaces. Such embeddings are well known in the case of classical domains $\Omega\subset\bbR^n$, cf. \cite[Chapter 1]{Henry}. To the best of our knowledge, these type of embedding for metric  graphs, although expected, have not appeared in print. For completeness of exposition we present them in Theorem \ref{analytic-semigroup-thm2} below. Let us first introduce some notation.  The edge-wise direct sum of Banach spaces of functions will be denoted by  $\hatt\  $, in particular, we write
\begin{align}\lb{hats}
	\hatt C_0^{\infty}(\Gamma):= \bigoplus_{e\in\cE} C_0^{\infty}(e), &\hatt W^{k,p}(\cG):= \bigoplus_{e\in\cE}W^{k,p}(e), k\in\bbN_0,\hatt C^{\nu}(\overline{\cG}) := \bigoplus_{e\in\cE}\hatt C^{\nu}(\overline{e}),
\end{align}
where $\nu>0$ and $C^{\nu}(\overline{e})$ denotes the usual space of H\"older continuous functions defined on the closed interval $\overline{e}$ 
endowed with the norm
\begin{equation}
	\|u\|_{C^\nu(\bar e)}=\sum_{\alpha\in\bbN_0,\alpha\le [\nu]}\sup_{x\in\bar e} |u^{(\alpha)}(x)|+\sup_{x,y\in\bar e,x\not =y}
	\frac{|u^{([\nu])}(x)-u^{([\nu])}(y)|}{|x-y|^{\nu-[\nu]}}.
\end{equation}
Let us note that the edge-wise direct sums introduced in \eqref{hats} induce no vertex conditions as oppose to the  space of continuous functions on the closure $\overline{\cG}$ of the graph
\begin{equation}
	C(\overline \Gamma)=\{u\in \hatt C(\overline \Gamma)\, :\, u\,\, {\rm is \,\, continuous\,\, at\,\, the\,\, vertices\,\, of}\,\, \Gamma\}.
\end{equation}

In the following theorem $(L^p(\gamma),\hatt{W}^{2,p}(\Gamma))_{\theta,q}$ denotes the interoplation space between $L^p(\Gamma)$ and
$\hatt{W}^{2,p}(\Gamma)$ via the $K$-method, where $0<\theta<1$ and $1\le q<\infty$ (see  \cite[Section 1.3.2]{Tri} for definition). 
\begin{theorem}
	\label{analytic-semigroup-thm2} Suppose that $1\leq p<\infty$. Then one has
	\begin{itemize}
		\item[(1)] {For any $q\ge p$ and $s-\frac{1}{p}> t-\frac{1}{q}$, there holds
			\begin{equation}
				\label{sobolev-embedding-eq3-0}
				\hatt W^{s,p} (\Gamma)\hookrightarrow \hatt W^{t,q}(\Gamma),
			\end{equation}
			\begin{equation}
				\label{sobolev-embedding-eq2}
				\hatt{W}^{s,p}(\Gamma)\hookrightarrow \hatt{C}^r(\overline \Gamma)
				\quad r<s-\frac{1}{p},
			\end{equation} 
			and
			for $s\in (0,1)\setminus\{\frac{1}{2}\}$ one has
			\begin{equation}
				\label{sobolev-embedding-eq3}
				(L^p(\Gamma), \hatt{W}^{2,p}(\Gamma))_{s,p}=\hatt{W}^{2s,p}(\Gamma).
			\end{equation}
		}
		\item[(2)] One has
		\begin{equation}
			\label{sobolev-embedding-eq4}
			(\hatt{L}^p(\Gamma), X_p^\alpha)_{\theta,p}=(\hatt{L}^p(\Gamma),\mathcal{D}(A_p))_{{\alpha}\theta,p},\quad \, 0<\theta<1,
		\end{equation}
		
		\begin{equation}
			\label{sobolev-embedding-eq5-0}
			X_p^\alpha \hookrightarrow \hatt {W}^{2\alpha\theta,p}(\Gamma),\quad \, 0<\theta<1,
		\end{equation}
		
		and 
		\begin{equation}
			\label{sobolev-embedding-eq5}
			X_p^\alpha \hookrightarrow \hatt C^\nu(\overline \Gamma),\quad \,  0<\nu <2\alpha -\frac{1}{p}.
		\end{equation}
	\end{itemize}
\end{theorem}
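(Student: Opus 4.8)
The plan is to split the proof along the structural divide between the two parts: every assertion in Part (1) concerns only the edge-wise direct-sum spaces $\hatt W^{s,p}(\Gamma)=\bigoplus_{e\in\cE}W^{s,p}(e)$, which carry \emph{no} vertex conditions, whereas Part (2) concerns the fractional power spaces $\cX^\alpha_p=\dom((I-\Delta)^\alpha)$, which encode the Neumann--Kirchhoff conditions through the domain of $\Delta$. Accordingly, Part (1) will be reduced to classical one-dimensional results, while Part (2) will be derived from the abstract theory of fractional powers of sectorial operators together with a reiteration argument.

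For Part (1), I would reduce each statement to the corresponding assertion on a single bounded interval and then take the finite direct sum over $e\in\cE$ (finite because $\Gamma$ is compact). The embeddings \eqref{sobolev-embedding-eq3-0} and \eqref{sobolev-embedding-eq2} follow edge-by-edge from the standard Sobolev and Morrey embeddings on an interval, cf.\ \cite[Chapter 1]{Henry}, with embedding constant the maximum over the finitely many edges; the interpolation identity \eqref{sobolev-embedding-eq3} is the edge-wise real-interpolation identity $(L^p(e),W^{2,p}(e))_{s,p}=W^{2s,p}(e)$ for $2s\in(0,2)\setminus\{1\}$ (cf.\ \cite[Section 1.3.2]{Tri}), which commutes with finite direct sums. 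No idea beyond bookkeeping is needed here.

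For Part (2), the key input is that $A_p:=I-\Delta$ is a nonnegative sectorial operator on $L^p(\Gamma)$ generating a bounded analytic semigroup (implicit in the well-posedness asserted in Theorem \ref{Lp}), so that the abstract theory of fractional powers applies. The general sandwich of fractional power spaces between real interpolation spaces,
\begin{equation}
(L^p(\Gamma),\mathcal{D}(A_p))_{\alpha,1}\hookrightarrow \cX^\alpha_p\hookrightarrow (L^p(\Gamma),\mathcal{D}(A_p))_{\alpha,\infty},
\end{equation}
(cf.\ \cite[Sections 1.14--1.15]{Tri}), shows that $\cX^\alpha_p$ is of class $\alpha$ between $L^p(\Gamma)$ and $\mathcal{D}(A_p)=\cX^1_p$; the reiteration theorem for real interpolation then yields \eqref{sobolev-embedding-eq4}. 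Estimate \eqref{sobolev-embedding-eq5-0} follows by chaining the continuous inclusion $\cX^\alpha_p\hookrightarrow (L^p(\Gamma),\cX^\alpha_p)_{\theta,p}$ (valid since $\cX^\alpha_p\hookrightarrow L^p(\Gamma)$), then \eqref{sobolev-embedding-eq4}, then the set inclusion $\mathcal{D}(A_p)\subseteq\hatt W^{2,p}(\Gamma)$ together with monotonicity of interpolation and the edge-wise identity \eqref{sobolev-embedding-eq3}:
\begin{equation}
\cX^\alpha_p\hookrightarrow (L^p(\Gamma),\mathcal{D}(A_p))_{\alpha\theta,p}\hookrightarrow (L^p(\Gamma),\hatt W^{2,p}(\Gamma))_{\alpha\theta,p}=\hatt W^{2\alpha\theta,p}(\Gamma),
\end{equation}
where $\theta\in(0,1)$ is first taken with $\alpha\theta\neq\frac12$; the borderline value (which only occurs when $\alpha>\frac12$) is absorbed by replacing $\theta$ with a slightly larger admissible $\theta'<1$ and using $\hatt W^{2\alpha\theta',p}(\Gamma)\hookrightarrow\hatt W^{2\alpha\theta,p}(\Gamma)$. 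Finally, \eqref{sobolev-embedding-eq5} is obtained by composing \eqref{sobolev-embedding-eq5-0} with the Morrey embedding \eqref{sobolev-embedding-eq2}: given $\nu<2\alpha-\frac1p$, choose $\theta$ close enough to $1$ that $2\alpha\theta-\frac1p>\nu$, so that $\cX^\alpha_p\hookrightarrow\hatt W^{2\alpha\theta,p}(\Gamma)\hookrightarrow\hatt C^\nu(\overline\Gamma)$.

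I expect the main obstacle to be not any single estimate but the clean identification of $\cX^\alpha_p$ with a real interpolation space, i.e.\ verifying that $A_p=I-\Delta$ genuinely fits the abstract framework (sectoriality and the class-$\alpha$ sandwich) on the nonstandard underlying space $L^p(\Gamma)$ with Kirchhoff coupling at the vertices. Once the analytic semigroup generation is invoked from Theorem \ref{Lp}, the reiteration and monotonicity steps are formal; the only remaining care is to keep $\alpha\theta$ away from the exceptional value $\frac12$ at which \eqref{sobolev-embedding-eq3} fails, which, as noted, is always achievable by perturbing $\theta$.
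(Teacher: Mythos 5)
Your proposal is correct and follows essentially the same route as the paper: Part (1) is the edge-wise reduction to classical one-dimensional Sobolev/Morrey embeddings and real interpolation (which the paper delegates to Amann's Theorems 11.5--11.6), \eqref{sobolev-embedding-eq4} is the reiteration identity for fractional powers of a positive operator (which the paper cites from Triebel, p.~101), and your chain $\cX^\alpha_p\hookrightarrow(L^p(\Gamma),\mathcal{D}(A_p))_{\alpha\theta,p}\hookrightarrow(L^p(\Gamma),\hatt W^{2,p}(\Gamma))_{\alpha\theta,p}=\hatt W^{2\alpha\theta,p}(\Gamma)$ followed by composition with the Morrey embedding is exactly the paper's argument for \eqref{sobolev-embedding-eq5-0} and \eqref{sobolev-embedding-eq5}. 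Your explicit handling of the exceptional value $\alpha\theta=\tfrac12$ by perturbing $\theta$ is a welcome clarification of a point the paper only gestures at.
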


\begin{proof}
	(1) \eqref{sobolev-embedding-eq3-0} and \eqref{sobolev-embedding-eq2} follow from \cite[Theorem 11.5]{Ama},  and
	\eqref{sobolev-embedding-eq3} follows from \cite[Theorem 11.6]{Ama}.	
	(2) First, \eqref{sobolev-embedding-eq4} follows from   \cite[(unnumbered) Theorem on page 101]{Tri}.
	
	To prove \eqref{sobolev-embedding-eq5-0},  for a given $0<\nu<2\alpha-\frac{1}{p}$, let us 
	choose $\theta\in (0,1)$ such that $\frac{\alpha}{2}\theta\not =1$ and $2 \alpha \theta-\frac{1}{p}>\nu$. Then one has
	\begin{align*}
		%\label{embedding-eq5}
		\cX_p^\alpha &\subset (L^p(\Gamma), \cX_p^\alpha)_{\theta,p}=(L^p(\Gamma),\mathcal{D}(A))_{{\alpha}\theta,p}\underset{\eqref{sobolev-embedding-eq4}}{\subset} ((L^p(\Gamma),\hatt{W}^{2,p}(\Gamma))_{{\alpha \theta},p}\underset{ \eqref{sobolev-embedding-eq3}}{=}\hatt{W}^{2\alpha\theta,p}(\Gamma).
	\end{align*}
	The embedding	\eqref{sobolev-embedding-eq5} follows from \eqref{sobolev-embedding-eq2} and \eqref{sobolev-embedding-eq5-0}.
\end{proof}

\begin{proposition}{\cite[Theorem 1.4.3]{MR610244}.}
	Let $\sigma>0$, $\alpha\in[0,1)$, $p\in[1, \infty)$. Then there exists $C>0$ such that for arbitrary $t>0$ and $u\in\elpee$, $v\in\cX^{\alpha}$ one has
	\begin{align}
		&\|({\sigma  I}-\Delta)^{\alpha}e^{(\Delta-{\sigma I})t}u\|_{\elpee}\leq C t^{-\alpha}e^{-\frac{ \sigma}{2} t} \|u\|_{\elpee},\lb{fracpow}\\
		&	\|(e^{(\Delta-\sigma I)t}-I)v\|_{\elpee}\leq C t^{\alpha} \|v\|_{\cX^{\alpha}_p}. \lb{fracpow2}
	\end{align}
\end{proposition}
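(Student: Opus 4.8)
The plan is to read the two bounds as the standard analytic–semigroup estimates of Henry \cite[Theorem 1.4.3]{MR610244} applied to the shifted operator $A:=\sigma I-\Delta$, so the proof reduces to verifying that operator's hypotheses for the Neumann--Kirchhoff Laplacian on $\elpee$ and then transferring the conclusion to the scale $\cX_p^\alpha$. First I would record that $-\Delta$ is a sectorial operator on $\elpee$, equivalently that $\Delta$ generates the analytic semigroup $\{e^{\Delta t}\}_{t\ge 0}$; this is exactly the property underpinning the definition of $\cX_p^\alpha$ and the regularity theory of Theorem \ref{Lp}, hence it is available from \cite{HWS}. Since $\Delta$ is self-adjoint and nonpositive on $L^2(\Gamma)$, its spectrum satisfies $\spec(\Delta)\subset(-\infty,0]$, and the same inclusion persists on $\elpee$. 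Consequently $A=\sigma I-\Delta$ is sectorial with $\re\spec(A)\subset[\sigma,\infty)$; note that $0\in\spec(\Delta)$ forces $\sigma=\inf\re\spec(A)$ to be attained, which is precisely why one settles for the decay rate $\tfrac{\sigma}{2}$ rather than the sharp $\sigma$: Henry's hypothesis $\re\spec(A)>a$ is strict, so it is applied with $a=\tfrac{\sigma}{2}<\sigma$.

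With $A$ sectorial and $\re\spec(A)>\tfrac{\sigma}{2}$, the cited theorem furnishes constants $C_\alpha>0$ such that
\begin{equation}
\|A^\alpha e^{-At}\|_{\cB(\elpee)}\leq C_\alpha\, t^{-\alpha}e^{-\frac{\sigma}{2}t},\qquad t>0,\ \alpha\geq 0,
\end{equation}
and, for $0<\alpha\leq 1$ and $v\in\dom(A^\alpha)$,
\begin{equation}
\|(e^{-At}-I)v\|_{\elpee}\leq C_\alpha\, t^{\alpha}\|A^\alpha v\|_{\elpee},\qquad t>0.
\end{equation}
Because $e^{-At}=e^{(\Delta-\sigma I)t}$ and $A^\alpha=(\sigma I-\Delta)^\alpha$, these are exactly \eqref{fracpow} and \eqref{fracpow2} once $\|A^\alpha v\|_{\elpee}$ is compared with $\|v\|_{\cX_p^\alpha}$. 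If one prefers to produce the decay factor by hand rather than read it off Henry's statement, I would use the semigroup property to split $A^\alpha e^{-At}=\big(A^\alpha e^{-At/2}\big)e^{-At/2}$, bound the first factor by $C_\alpha (t/2)^{-\alpha}$ from the bare sectorial smoothing estimate and the second by $\|e^{-At/2}\|\leq e^{-\sigma t/2}$ coming from $\spec(A)\subset[\sigma,\infty)$; the product yields the claimed $t^{-\alpha}e^{-\sigma t/2}$.

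It remains to reconcile the two fractional powers, since $\cX_p^\alpha=\dom((I-\Delta)^\alpha)$ is defined with the shift $\sigma=1$ while the proposition allows a general $\sigma>0$. As $\sigma I-\Delta$ and $I-\Delta$ are commuting sectorial functions of $\Delta$, the bounded $H^\infty$ functional calculus (equivalently, boundedness of $(\sigma I-\Delta)^\alpha(I-\Delta)^{-\alpha}$ and its inverse) gives the norm equivalence
\begin{equation}
c\,\|(I-\Delta)^\alpha v\|_{\elpee}\leq \|(\sigma I-\Delta)^\alpha v\|_{\elpee}\leq C\,\|(I-\Delta)^\alpha v\|_{\elpee}
\end{equation}
on $\dom((I-\Delta)^\alpha)=\dom((\sigma I-\Delta)^\alpha)=\cX_p^\alpha$; on $L^2(\Gamma)$ this is immediate from the spectral theorem, since $(\sigma+|\lambda|)^\alpha\asymp(1+|\lambda|)^\alpha$ uniformly for $\lambda\in\spec(\Delta)\subset(-\infty,0]$. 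The only genuinely nontrivial ingredient is the $\elpee$-sectoriality of $\Delta$ for $p\neq 2$, which I expect to be the main obstacle and which I would import from \cite{HWS} rather than reprove; granting it, the estimates follow by the direct application of \cite[Theorem 1.4.3]{MR610244} together with this routine norm comparison.
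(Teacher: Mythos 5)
Your proposal is correct and takes essentially the same route as the paper, which offers no proof of its own but cites the proposition verbatim as Henry's Theorem 1.4.3 applied to the sectorial operator $A=\sigma I-\Delta$ on $\elpee$ — exactly your reduction, including the correct observation that $0\in\spec(\Delta)$ puts $\sigma$ in $\spec(A)$ and forces the decay rate $\tfrac{\sigma}{2}$ rather than $\sigma$. One cosmetic improvement: the shift-equivalence of the fractional-power norms needs no $H^\infty$ functional calculus (which is delicate on $L^1(\Gamma)$, and your range includes $p=1$); since $(\sigma I-\Delta)-(I-\Delta)=(\sigma-1)I$ is a bounded perturbation, boundedness of $(\sigma I-\Delta)^\alpha(I-\Delta)^{-\alpha}$ and its inverse follows from Henry's elementary comparison theorem for fractional powers (\cite[Theorem 1.4.8]{MR610244}), which works for all $p\in[1,\infty)$.
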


\begin{theorem}{\cite{HWS}.}\lb{lplq estimate1}
	Let $\{e^{\Delta t}\}_{t\geq 0}$ be analytic semigroup generated by the Neumann--Kirchhoff Laplacian in $\elpee$, $1\leq p<\infty$. Let $q\in [p, \infty)$ then there exists a constant $C=C(p, q, \Gamma)>0$  such that for arbitrary $u\in C_0^\infty(\Gamma)$ and $t>0$ one has
	\begin{align}
		&\|e^{\Delta t}\partial_x u\|_{L^q(\Gamma)}\leq  Ct^{-\frac12-\frac12\left(\frac1p-\frac1q \right)}\|u\|_{\el{p}}\label{tgradlplq}.
	\end{align}
\end{theorem}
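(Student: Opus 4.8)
The plan is to reduce the estimate to Gaussian bounds for the heat kernel of the Neumann--Kirchhoff Laplacian and then apply a Young-type inequality for integral operators. Write $p_t(x,y)$ for the (jointly smooth, symmetric, positivity-preserving) integral kernel of $e^{\Delta t}$ on $\Gamma\times\Gamma$. Because a compact metric graph is a one-dimensional, Ahlfors $1$-regular metric measure Dirichlet space satisfying a scale-invariant Poincar\'e inequality, the standard heat-kernel machinery (Grigor'yan--Saloff-Coste--Sturm) yields, for $0<t\le 1$, the bound $0\le p_t(x,y)\le C t^{-1/2}\exp(-\dist(x,y)^2/(Ct))$ together with the matching gradient bound $|\partial_x p_t(x,y)|\le C t^{-1}\exp(-\dist(x,y)^2/(Ct))$; for $t\ge1$ the kernel is bounded and converges exponentially fast to $1/|\Gamma|$. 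Away from the vertices $\Gamma$ is locally isometric to an interval, so these are classical one-dimensional facts, and the only real content is their uniformity across vertices, enforced through the Kirchhoff matching conditions.

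For $u\in C_0^\infty(\Gamma)$ the support lies in the interiors of the edges, so I would integrate by parts with no vertex contributions and write
\begin{equation}
e^{\Delta t}\partial_x u(x)=\int_\Gamma p_t(x,y)\,\partial_y u(y)\,dy=-\int_\Gamma \partial_y p_t(x,y)\,u(y)\,dy=:\int_\Gamma G_t(x,y)\,u(y)\,dy .
\end{equation}
Fixing the exponent $r$ by $\tfrac1r=1-\tfrac1p+\tfrac1q\in(0,1]$ (admissible since $q\ge p$) and integrating the one-dimensional Gaussian over the edges, the gradient bound gives, for $0<t\le 1$,
\begin{equation}
\sup_{x\in\Gamma}\|G_t(x,\cdot)\|_{L^r(\Gamma)}+\sup_{y\in\Gamma}\|G_t(\cdot,y)\|_{L^r(\Gamma)}\le C\,t^{-1+\frac{1}{2r}} .
\end{equation}
The Young/Schur inequality for integral operators with the balance $\tfrac1q=\tfrac1p-(1-\tfrac1r)$ then yields $\|e^{\Delta t}\partial_x u\|_{L^q(\Gamma)}\le C\,t^{-1+\frac{1}{2r}}\|u\|_{L^p(\Gamma)}$, and a direct computation gives $-1+\tfrac1{2r}=-\tfrac12-\tfrac12(\tfrac1p-\tfrac1q)$, which is exactly \eqref{tgradlplq} in the regime $0<t\le1$.

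For $t\ge1$ I would use the semigroup law $e^{\Delta t}\partial_x u=e^{\Delta(t-1)}\big(e^{\Delta}\partial_x u\big)$ together with the fact that $\partial_x u$ has zero mean over $\Gamma$ (its edgewise integrals telescope to boundary values that vanish). Since $e^{\Delta s}$ preserves the mean and contracts exponentially on the mean-zero subspace in every $L^q(\Gamma)$, with rate the spectral gap of $\Delta$, one gets $\|e^{\Delta t}\partial_x u\|_{L^q(\Gamma)}\le C e^{-\delta t}\|u\|_{L^p(\Gamma)}$, which for $t\ge1$ is dominated by $C\,t^{-\frac12-\frac12(\frac1p-\frac1q)}\|u\|_{L^p(\Gamma)}$. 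Concatenating the two regimes proves the theorem for all $t>0$.

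I expect the main obstacle to be the pointwise gradient heat-kernel bound, uniformly across the vertices, since the Kirchhoff conditions couple the edges and the normal derivatives of $p_t$ must be balanced there; this is the one place where the graph structure, rather than one-dimensional calculus, genuinely enters. A route that avoids gradient kernel bounds is the factorization $e^{\Delta t}\partial_x=e^{\Delta t/2}\,(e^{\Delta t/2}\partial_x)$, estimating the outer factor by ultracontractivity $\|e^{\Delta s}\|_{L^p(\Gamma)\to L^q(\Gamma)}\le Cs^{-\frac12(\frac1p-\frac1q)}$ and the inner one by the gradient smoothing bound $\|e^{\Delta s}\partial_x\|_{L^p(\Gamma)\to L^p(\Gamma)}\le Cs^{-1/2}$, whose product reproduces the exponent. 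For $p=2$ the inner bound is immediate by duality from $\|\partial_x w\|_{L^2(\Gamma)}=\|(-\Delta)^{1/2}w\|_{L^2(\Gamma)}$ and the analytic-semigroup estimate $\|(-\Delta)^{1/2}e^{\Delta s}\|\le Cs^{-1/2}$; for general $p$ it follows either from the same Gaussian gradient bounds or from $L^p$-boundedness of the Riesz transform $\partial_x(-\Delta)^{-1/2}$ on $\Gamma$.
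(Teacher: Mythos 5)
This statement is imported verbatim from the companion paper \cite{HWS} and is not proved in the present paper, so there is no in-house argument to compare against; I can only assess your proposal on its own terms. The overall architecture is the standard and correct one: represent $e^{\Delta t}\partial_x u$ as an integral operator with kernel $-\partial_y p_t(x,y)$ (the integration by parts is legitimate since $u$ is supported in the open edges), apply the generalized Young/Schur inequality with $\tfrac1r=1-\tfrac1p+\tfrac1q$, and your exponent arithmetic $-1+\tfrac1{2r}=-\tfrac12-\tfrac12(\tfrac1p-\tfrac1q)$ is right. The large-time argument is also sound: $\partial_x u$ has zero mean edgewise, $e^{\Delta s}$ preserves the mean and, by combining the $L^2$ spectral gap with ultracontractivity, decays exponentially on the mean-zero subspace in every $L^q(\Gamma)$, which dominates the polynomial rate for $t\ge1$.

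The genuine gap is the pointwise gradient bound $|\partial_x p_t(x,y)|\le Ct^{-1}\exp(-\dist(x,y)^2/(Ct))$, which you correctly identify as the crux but then attribute to the Grigor'yan--Saloff-Coste--Sturm machinery. That machinery (volume doubling plus a scale-invariant Poincar\'e inequality) yields two-sided Gaussian bounds on $p_t$ itself, but it does \emph{not} yield Gaussian bounds on $\nabla_x p_t$; gradient heat-kernel bounds are a strictly stronger property (in general equivalent to quantitative Riesz-transform/curvature-type information and known to fail on spaces that do satisfy doubling and Poincar\'e). So the central analytic input of your small-time argument is asserted rather than established. Your fallback route does not escape this: the inner factor $\|e^{\Delta s}\partial_x\|_{L^p(\Gamma)\to L^p(\Gamma)}\le Cs^{-1/2}$ is immediate only for $p=2$, and for general $p$ you again invoke either the same gradient kernel bound or $L^p$-boundedness of the Riesz transform on $\Gamma$, neither of which is proved. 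What actually closes the gap on a compact quantum graph is the explicit path-sum (Roth/Kostrykin--Potthoff--Schrader) representation of the Kirchhoff heat kernel: each term is an explicit one-dimensional Gaussian in the length of a combinatorial walk, differentiating term by term costs exactly one factor of $t^{-1/2}$, and the scattering coefficients are bounded by $1$ so the sum over walks converges with the stated Gaussian envelope for $0<t\le1$. With that lemma in place (or an equivalent derivation), the rest of your argument goes through.
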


%\bibliography{mybib}
%\bibliographystyle{siam}

\end{document}